\documentclass[11pt,a4paper]{amsart}

\usepackage[margin=2.6cm]{geometry}
\usepackage{amsmath,amssymb,amsthm}
\usepackage{mathtools}
\usepackage{booktabs}
\usepackage{enumitem}
\usepackage{xcolor}
\usepackage[colorlinks=true,linkcolor=blue!50!black,citecolor=blue!50!black,urlcolor=blue!50!black]{hyperref}

\theoremstyle{plain}
\newtheorem{theorem}{Theorem}[section]
\newtheorem{proposition}[theorem]{Proposition}
\newtheorem{lemma}[theorem]{Lemma}
\newtheorem{corollary}[theorem]{Corollary}

\newtheorem{thmx}{Theorem}

\theoremstyle{definition}
\newtheorem{definition}[theorem]{Definition}
\newtheorem{example}[theorem]{Example}
\theoremstyle{remark}
\newtheorem{remark}[theorem]{Remark}

\newcommand{\C}{\mathbb{C}}
\newcommand{\R}{\mathbb{R}}
\newcommand{\PP}{\mathbb{P}}
\newcommand{\Gr}{\mathrm{Gr}}
\newcommand{\Iso}{\mathrm{Iso}}
\newcommand{\SU}{\mathrm{SU}}
\newcommand{\SL}{\mathrm{SL}}
\newcommand{\GL}{\mathrm{GL}}
\newcommand{\U}{\mathrm{U}}
\newcommand{\Hom}{\mathrm{Hom}}
\newcommand{\sig}{\operatorname{sig}}
\newcommand{\rank}{\operatorname{rank}}
\newcommand{\codim}{\operatorname{codim}}
\newcommand{\Ep}{E_{+}}
\newcommand{\Em}{E_{-}}
\newcommand{\cO}{\mathcal{O}}
\newcommand{\cD}{\mathcal{D}}
\newcommand{\cA}{\mathcal{A}}
\newcommand{\cP}{\mathcal{P}}
\newcommand{\la}{\langle}
\newcommand{\ra}{\rangle}
\newcommand{\Om}{\Omega_{\mathrm{AG}}}

\begin{document}

\title[Holomorphic hulls of Matsuki orbits]{Holomorphic functions and hulls of Matsuki-dual orbits\\ of $\SU(p,q)$ in complex Grassmannians}

\author{Irfan Ullah}
\address{Abdus Salam School of Mathematical Sciences, Government College University, Lahore 54000, Pakistan}
\address{Department of Sciences and Humanities, FAST National University of Computer and Emerging Sciences, Islamabad 44000, Pakistan}
\email{irfanullah@nu.edu.pk}

\subjclass[2020]{32V25, 32D10, 32M05, 14M15, 22E46, 32V40}
\keywords{Matsuki duality, real form orbits, flag manifolds, CR manifolds, CR functions, envelopes of holomorphy, polynomial hulls, bounded symmetric domains}

\begin{abstract}
Let $G_0$ be a real form of a complex semisimple group acting on a flag manifold $Z$, with maximal compact subgroup $K_0$ and its complexification $K$. Matsuki duality pairs each $G_0$-orbit $\gamma$ with a $K$-orbit $\kappa$, and $M=\kappa\cap\gamma$ is a compact $K_0$-orbit. For lower-dimensional $\gamma$, $M$ is a compact CR manifold, and we ask which functions are holomorphic near $M$ and what its holomorphic hull is. In general, we show that $K_0$-finite holomorphic functions near $M$ are regular functions on $K$-orbits, and that $M$ carries non-constant holomorphic germs if some $K$-stable prime divisor of $Z$ misses $\kappa$, and, under a connectedness hypothesis, only constants otherwise. For $G_0=SU(p,q)$ acting on any Grassmannian $\mathrm{Gr}_k(\mathbb{C}^{p+q})$ we give a complete answer. Holomorphic functions near $M$ are constant unless $M$ is of holomorphic type: $k\in\{p,q\}$ and $M$ lies in the Harish-Chandra chart of the bounded symmetric domain $\mathcal{D}$. Then $M$ is the manifold of rank-$c$ partial isometries, and a holomorphic hull of $M$ is $\overline{\mathcal{D}}\cap\{\mathrm{rank}\le c\}$. This agrees with the polynomial hull of Kaup and Zaitsev except in one totally real case, where it is strictly smaller because $\det^{-1}$ is holomorphic near $U(p)$. The same dichotomy decides when global CR functions on $M$ are constant and when holomorphic sections of line bundles near $M$ are global; its exceptions differ from the Gindikin-Matsuki exceptions for cycle spaces, except at the base point of $\mathcal{D}$. This settles the descent question left open in earlier work with K. Shabbir on rank-one orbits. We also determine which $K$-orbits carry non-constant regular functions, prove constancy via chains of rational curves for small $c$, and compute Levi forms in examples.
\end{abstract}

\maketitle

\tableofcontents

%=====================================================================
\section{Introduction}\label{sec:intro}
%=====================================================================

Let $G$ be a connected complex semisimple Lie group, $G_0\subset G$ a non-compact real form, $K_0\subset G_0$ a maximal compact subgroup and $K=K_0^\C$. On a $G$-flag manifold $Z=G/P$ both $G_0$ and $K$ have finitely many orbits \cite{Wolf69}. Matsuki duality \cite{Matsuki79,Matsuki88,MUV92,BL02} is a bijection $\gamma\leftrightarrow\kappa$ between $G_0$-orbits and $K$-orbits characterised by the condition that
\[
M:=\kappa\cap\gamma
\]
is non-empty and compact; $M$ is then a single $K_0$-orbit. We call $M$ the \emph{Matsuki orbit} of $\gamma$.

When $\gamma=D$ is open (a \emph{flag domain}), $M=C_0$ is the base cycle: a compact complex submanifold of $D$. Its deformations form the cycle space of $D$, which is a Stein domain and, with explicit Hermitian exceptions, coincides with the universal Akhiezer--Gindikin domain $\Om$ \cite{FHW}. When $\gamma$ is lower-dimensional, Huckleberry and Ntatin \cite{HN03,Ntatin} defined and identified its cycle space. However, $M$ itself is then only a compact CR submanifold of $Z$, and its complex-analytic properties remained undescribed. This raises the following questions: what are these partially complex orbits, which functions are holomorphic near them, and what is their envelope (hull) of holomorphy? The guiding example is $\SU(p,1)$ acting on $\PP^p$. There the closed orbit is a sphere, every holomorphic function near it extends to the ball it bounds, and the ball contains the base cycle of the adjacent flag domain.

This paper answers these questions completely for $G_0=\SU(p,q)$ acting on an arbitrary complex Grassmannian $\Gr_k(\C^{p+q})$, and proves two general results along the way. The answer is a sharp dichotomy. Generically, a neighbourhood of $M$ carries only constants. The sphere--ball phenomenon survives precisely for the orbits that live in the Harish-Chandra realisation of the bounded symmetric domain $\cD$ of $\SU(p,q)$. These orbits are the manifolds of partial isometries (tripotents) studied by Kaup and Zaitsev \cite{KZ00,KZ03}.

\subsection*{Relation to fibrewise Rossi extension domains}
This paper continues the work of the author and K.~Shabbir \cite{UllahShabbir}, which studied the same problem for the rank-one Matsuki orbits $M(a,b,1)$ with $\min(p-a,q-b)=1$ and $\max(p-a,q-b)\ge2$. There each fibre of $M\to\Gr_a(\Ep)\times\Gr_b(\Em)$ is a strictly pseudoconvex sphere bounding a projective ball. It was shown in \cite[Thm.~1]{UllahShabbir} that every real-analytic CR function on $M$ extends holomorphically to the union of these balls, the \emph{fibrewise Rossi extension domain} $D_{a,b,1}$. This domain lies in an auxiliary projective bundle $X_{a,b}$, and the question whether the extensions descend to the Grassmannian $Z$ under the evaluation map $X_{a,b}\to Z$ was left open there. The present paper extends that work in three directions:
\begin{itemize}
\item it treats Matsuki orbits of arbitrary rank $c$ in every Grassmannian $\Gr_k(\C^{p+q})$, not only rank one;
\item it works directly in $Z$ and determines the global holomorphic hull of $M$ and the maximal ideal space of $CR^0(M)$, rather than a domain built fibre by fibre;
\item it answers the descent question of \cite{UllahShabbir} in the rank-one case. If $M$ is not of holomorphic type, all CR functions are constant. If $M$ is of holomorphic type, the evaluation map sends the closure of $D_{a,b,1}$ onto the hull $\widehat M$, and the fibrewise extensions descend to it.
\end{itemize}
The precise relation is worked out in Section~\ref{subsec:earlier}.

\subsection*{Main results}
Write $n=p+q$, $\C^n=\Ep\oplus\Em$ for an orthogonal splitting into a positive $p$-plane and a negative $q$-plane of the Hermitian form $h$ of signature $(p,q)$, and $Z=\Gr_k(\C^n)$. The $G_0$-orbits are $O(a,b,c)$, indexed by the signature $(a,b,c)$ of $h|_W$ ($a$ positive, $b$ negative, $c$ null directions). Their Matsuki orbits are denoted $M(a,b,c)$ (Section~\ref{sec:orbits}).

\begin{definition}\label{def:holtype}
$M(a,b,c)$ is of \emph{holomorphic type} if $b=0$ and $k=p$, or $a=0$ and $k=q$.
\end{definition}

If $b=0$ and $k=p$, then every $W\in M$ satisfies $W\cap\Em=0$, so $W$ is the graph of a linear map $\Phi:\Ep\to\Em$. The chart $\cA=\{W\cap\Em=0\}\cong\Hom(\Ep,\Em)$ contains the open orbit $O(p,0,0)$, which in this chart is the open matrix ball $\cD=\{\|\Phi\|<1\}$, the bounded symmetric domain of $\SU(p,q)$ in its Harish-Chandra realisation. Throughout, $\overline\cD=\{\|\Phi\|\le1\}$ denotes its closure, which is compact and lies in $\cA$; all hulls below are closed subsets of $\overline\cD$. The second case is symmetric.

\begin{thmx}[Structure]\label{thmA}
$M(a,b,c)$ is a $K_0$-homogeneous fibre bundle over $\Gr_a(\Ep)\times\Gr_b(\Em)$ whose fibre is the closed $\SU(p-a,q-b)$-orbit $\Iso_c(\C^{p-a,q-b})$ of totally isotropic $c$-planes. It is a generic CR submanifold of its $K$-orbit $\kappa(a,b)$, of CR codimension $c^2$. Its radical map $W\mapsto W\cap W^{\perp_h}$ exhibits it also as a bundle with compact complex fibres $\Gr_a(\C^{p-c})\times\Gr_b(\C^{q-c})$ over $\Iso_c(\C^n)$.
\end{thmx}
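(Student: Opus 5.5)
I will work with explicit models throughout. Take $K_0=S(\U(p)\times\U(q))$ and $K=S(\GL(\Ep)\times\GL(\Em))$. The first step is to identify $\kappa(a,b)$ and $M(a,b,c)$ concretely.

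\textbf{Identifying the orbits.} The $K$-orbits on $Z$ are classified by the pair
\[
(\dim W\cap\Ep,\ \dim W\cap\Em).
\]
Given $(a,b,c)$ with $a+b+c=k$, I take $\kappa(a,b)$ to be the orbit on which $\dim W\cap\Ep=a$ and $\dim W\cap\Em=b$ (assuming this is the Matsuki dual in Section~\ref{sec:orbits}). I then set
\[
M(a,b,c)=\{W=A\oplus B\oplus N\},
\]
where $A\subset\Ep$ has dimension $a$, $B\subset\Em$ has dimension $b$, and $N\subset (A\oplus B)^{\perp}=A^{\perp}\oplus B^{\perp}$ is a $c$-plane that is totally $h$-isotropic. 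Such an $N$ is the graph of an isometry $U\to V$ from a $c$-plane $U\subset A^{\perp_{\Ep}}$ to a $c$-plane $V\subset B^{\perp_{\Em}}$.

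I will check the following three facts directly.
\begin{itemize}
\item $M$ lies in the $G_0$-orbit $O(a,b,c)$, since the signature of $h|_W$ is visible in the decomposition.
\item $M$ lies in $\kappa(a,b)$. Here I need that $N\cap\Ep=0$ and $N\cap\Em=0$, which holds because an isotropic vector has equal positive and negative norm.
\item $K_0$ acts transitively on $M$, since $\U(p)\times\U(q)$ moves any such data $(A,U)$, $(B,V)$, isometry to any other.
\end{itemize}
Together these show that $M$ is compact and nonempty inside $\kappa\cap\gamma$. By the uniqueness part of Matsuki duality, it therefore equals $\kappa\cap\gamma$.

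\textbf{The two fibrations.} The map $W\mapsto(W\cap\Ep,W\cap\Em)=(A,B)$ is $K_0$-equivariant onto $\Gr_a(\Ep)\times\Gr_b(\Em)$. Its fibre is the set of totally isotropic $c$-planes in $A^{\perp}\oplus B^{\perp}\cong\C^{p-a,q-b}$, which is the closed orbit $\Iso_c$ of $\SU(p-a,q-b)$.

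For the radical map, $W\cap W^{\perp_h}=N$, because $A\oplus B$ is nondegenerate. So $N$ ranges over $\Iso_c(\C^n)$ (all totally isotropic $c$-planes, on which $K_0$ is transitive). Given $N$, the image of $N$ in $\Ep$ is a $c$-plane $U$, and $A$ is any $a$-plane in $U^{\perp}\cap\Ep\cong\C^{p-c}$. Likewise $B$ is any $b$-plane in $V^{\perp}\cap\Em\cong\C^{q-c}$. This gives the compact complex fibres $\Gr_a(\C^{p-c})\times\Gr_b(\C^{q-c})$.

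\textbf{CR structure (the main obstacle).} I would compute dimensions and show transversality.
\begin{itemize}
\item The fibre bundle $\kappa(a,b)\to\Gr_a(\Ep)\times\Gr_b(\Em)$ has fibre the open $K$-orbit of $c$-planes $N'$ in $\C^{p-a}\oplus\C^{q-b}$ meeting neither summand. Locally this fibre is the set of graphs of injective maps $U\to\C^{q-b}$, with $U\in\Gr_c(\C^{p-a})$.
\item Inside it, $M$ is cut out by the condition $\Phi^*\Phi=\mathrm{id}_U$, which is $c^2$ real equations taking values in the Hermitian matrices $\mathrm{Herm}(U)$.
\item Genericity means $T_xM+JT_xM=T_x\kappa$. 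I would verify this on the Hermitian part of the variation: varying $\Phi\mapsto\Phi(1+tX)$ with $X$ Hermitian changes $\Phi^*\Phi$ by $2tX$, while $J$ sends this variation to $\Phi(1+itX)$, which lies in $T M$ (unitary reparametrisation). Hence $JTM$ covers the normal directions, and the real codimension of $M$ in $\kappa$ is $c^2$.
\end{itemize}

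For a generic submanifold, the CR codimension equals its real codimension in the ambient complex manifold, which gives CR codimension $c^2$. Finally I would cross-check by counting real dimensions of $\kappa$ and of $M$ from the two fibrations. The delicate point is making sure the normal map is a submersion uniformly over the base, which follows from $K_0$-homogeneity once it is checked at one point.
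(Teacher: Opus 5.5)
\textbf{There is a gap in your identification of $M(a,b,c)$ with $\kappa(a,b)\cap O(a,b,c)$.} You show that the explicit set $\{A\oplus B\oplus N\}$ is a single $K_0$-orbit contained in $\kappa(a,b)\cap O(a,b,c)$. You then conclude equality ``by the uniqueness part of Matsuki duality''. But every $K_0$-orbit is compact, so finding a compact non-empty $K_0$-orbit inside $\kappa\cap\gamma$ says nothing about whether $\kappa$ and $\gamma$ are dual. Already for $\SU(1,1)$ on $\PP^1$, the open $K$-orbit $\C^*$ meets the disc $O(1,0,0)$ in the punctured disc. That is a non-compact union of compact $K_0$-circles, although $\C^*$ is dual to the circle and not to the disc. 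Your conclusion is valid only if you already know that $\kappa(a,b)$ is the Matsuki dual of $O(a,b,c)$, so that $\kappa\cap\gamma$ is a single $K_0$-orbit. You explicitly assume this rather than prove it. In the paper this duality is part of what Theorem~\ref{thm:structure} establishes, and the phrase ``its $K$-orbit $\kappa(a,b)$'' in the statement depends on it.

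\textbf{The missing step is the reverse inclusion, and it is short.} Let $W\in\kappa(a,b)\cap O(a,b,c)$, and put $W_\pm=W\cap E_\pm$ and $N=W\cap W^{\perp_h}$.
\begin{enumerate}
\item $N$ is $h$-orthogonal to $W_+\oplus W_-$.
\item $h$ is non-degenerate on $W_+\oplus W_-$: the summands are $h$-orthogonal, and $h(w_++w_-)=\|w_+\|^2-\|w_-\|^2$. Hence $N\cap(W_+\oplus W_-)=0$.
\item Since $\dim W_++\dim W_-+\dim N=a+b+c=k$, you get $W=W_+\oplus W_-\oplus N$ with $N\in\Iso_c\bigl((W_+\oplus W_-)^{\perp_h}\bigr)$.
\end{enumerate}
So the intersection equals your explicit set and is therefore non-empty and compact. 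Matsuki's criterion (dual if and only if the intersection is non-empty and compact) then yields the duality instead of presupposing it.

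\textbf{With this added, the rest of your proposal is sound.} The two fibrations are handled as in the paper. For genericity you take a local route: you cut $M$ out of each fibre of $\kappa\to\Gr_a(\Ep)\times\Gr_b(\Em)$ by the $c^2$ real equations $\Phi^*\Phi=\mathrm{id}_U$, using the variations $\Phi X$ and $\Phi(iX)$ with $X$ Hermitian. This shows the defining equations and the codimension $c^2$ explicitly. You should still say why fibrewise genericity, together with surjectivity of $T_WM$ onto the tangent space of the base, gives genericity in $\kappa$. The paper's argument is shorter: $M$ and $\kappa$ are the $K_0$- and $K$-orbits of the same point, so $T_W\kappa=\mathfrak k\cdot W=\mathfrak k_0\cdot W+i\,\mathfrak k_0\cdot W=T_WM+J\,T_WM$. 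The codimension $c^2$ then follows from the dimension formulas.
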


\begin{thmx}[Algebraisation; general $G_0$]\label{thmB}
Let $y\in Z$, let $U$ be a $K_0$-invariant neighbourhood of $K_0y$ in $Z$, and suppose the stabiliser $K_y$ is connected. Every $K_0$-finite holomorphic function on $U$ coincides, near $K_0y$ in $Ky$, with a unique regular function on the quasi-projective variety $Ky$.
\end{thmx}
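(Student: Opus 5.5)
The plan is to restrict $f$ to the $K$-orbit, use $K_0$-finiteness to write it as a finite combination of matrix coefficients of $K_0$, complexify those coefficients to algebraic functions on $K$, and then push down to $Ky\cong K/K_y$.

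Let $f$ be $K_0$-finite and holomorphic on $U$. Let $V=\mathrm{span}\{k\cdot f: k\in K_0\}$, where $(k\cdot f)(z)=f(k^{-1}z)$. This is a finite-dimensional $K_0$-representation on holomorphic functions on $U$. It is continuous, in fact real-analytic, in $k$, so it extends uniquely to a holomorphic representation $\rho$ of $K=K_0^\C$ on $V$. Since $K$ is reductive and $K_0$ is Zariski dense in it, $\rho$ is a rational representation. Choose a basis $f_1=f,\dots,f_m$ of $V$. For $k\in K_0$ and $z\in U$ we have
\[
f(k^{-1}z)=\sum_j \rho_{j1}(k)\,f_j(z),
\]
where the coefficients $\rho_{j1}$ are regular functions on $K$. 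Put $z=y$ and define $F(g)=\sum_j\rho_{j1}(g)f_j(y)$ for $g\in K$. Then $F$ is regular on $K$, and $F(k)=f(k^{-1}y)$ for $k\in K_0$.

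Next compare $F$ with $f$ on the orbit. The map $g\mapsto f(g^{-1}y)$ is holomorphic on a neighbourhood $\Omega$ of $K_0$ in $K$, namely those $g$ with $g^{-1}y\in U$. Take $\Omega$ to be the connected component of this set containing $K_0$. On $\Omega$ both $F(g)$ and $f(g^{-1}y)$ are holomorphic, and they agree on $K_0$. Since $K_0$ is a totally real submanifold of maximal dimension in $K$, the identity principle gives $F(g)=f(g^{-1}y)$ on $\Omega$. In particular $F$ is right $K_y$-invariant near $K_0$: if $h\in K_y$ and $g,gh\in\Omega$, then $F(gh)=F(g)$.

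The main obstacle is upgrading this local invariance to global right $K_y$-invariance on all of $K$; this is where connectedness of $K_y$ enters. Fix $g\in K_0$. The function $h\mapsto F(gh)-F(g)$ is regular on $K_y$ and vanishes on the neighbourhood $\{h: gh\in\Omega\}$ of $e$. Since $K_y$ is a connected, hence irreducible, algebraic group, the function vanishes identically on $K_y$. Thus $F(gh)=F(g)$ for all $g\in K_0$ and $h\in K_y$. For fixed $h$, the function $g\mapsto F(gh)-F(g)$ is holomorphic on $K$ and vanishes on $K_0$, so it vanishes on all of $K$. Hence $F$ is right $K_y$-invariant, and because $K_y$ is connected no component issues arise.

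It remains to descend and prove uniqueness. $K/K_y\to Ky$ is a bijective $K$-equivariant morphism onto the orbit, which is smooth and quasi-projective, and in characteristic zero it is an isomorphism. The quotient $K\to K/K_y$ is a categorical quotient by the reductive-free action of $K_y$, so invariant regular functions on $K$ are exactly the regular functions on $K/K_y$. Thus $F$ descends to a regular $\tilde F$ on $Ky$. Define $\phi(x)=\tilde F(g^{-1}y)$ for $x=gy$; this $\phi$ is the required regular function, and $\phi(gy)=f(gy)$ for $g^{-1}\in\Omega$. So $\phi$ agrees with $f$ on a neighbourhood of $K_0y$ in $Ky$. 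Uniqueness follows because $Ky$ is irreducible, being the image of connected $K$, so a regular function vanishing on a nonempty open subset vanishes identically.
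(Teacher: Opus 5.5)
Your strategy works, but the invariance step is on the wrong side, and as written it is false. From $F(g)=f(g^{-1}y)$ on $\Omega$ you get $F(gh)=f(h^{-1}g^{-1}y)$. For $h\in K_y$ this equals $F(g)=f(g^{-1}y)$ only if $h$ fixes $g^{-1}y$, i.e.\ $h\in g^{-1}K_yg$. Since $K_y$ is not normal in general, this fails already for $g\in K_0$ and $h\in K_y$ near $e$. For example, take $\SU(1,2)$ on $\PP^2$, the chart $\cA\cong\Hom(\C,\C^2)=\C^2$, $y=(1,0)$ and $f(\Phi)=\Phi_1$. For $g=(1,g_-)\in K_0$ and $h=(1,h_-)\in K_y$ with $h_-=\bigl(\begin{smallmatrix}1&s\\0&1\end{smallmatrix}\bigr)$ one finds $F(gh)-F(g)=-s\,(g_-^{-1})_{21}$, which is nonzero for suitable $g_-\in\U(2)$. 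What your matrix coefficient actually satisfies is \emph{left} invariance, $F(hg)=f(g^{-1}h^{-1}y)=F(g)$. Consequently $F$ does not define a function $\tilde F$ on $K/K_y\cong Ky$. Your final formula $\phi(gy)=\tilde F(g^{-1}y)$ is also not well defined as constructed, since replacing $g$ by $gh$ moves $g^{-1}y$ to $h^{-1}g^{-1}y$. The formula you really want, $\phi(gy)=F(g^{-1})$, is well defined precisely because of left invariance.

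The repair is a one-line change. Work with $\check F(g):=F(g^{-1})=\sum_j\rho_{j1}(g^{-1})f_j(y)$, which is exactly the paper's function $k\mapsto(\rho(k^{-1})f)(y)$. It is regular and equals $f(gy)$ on $\Omega^{-1}$, the component containing $K_0$ of $\{k:ky\in U\}$. For $g\in K_0$, the set $gK_y$ is connected, lies in $\{k:ky\in U\}$ because $ghy=gy$, and contains $g$. Hence $gK_y\subset\Omega^{-1}$, and $\check F(gh)=f(gy)=\check F(g)$ for all $h\in K_y$, so the irreducibility step is not even needed. Your identity-theorem argument in $g$ then gives right $K_y$-invariance on all of $K$, and your descent and uniqueness arguments go through.

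Repaired, your proof is a valid alternative to the paper's. The paper obtains $\mathrm{ev}_y\circ\rho(h)=\mathrm{ev}_y$ on $V$ for $h\in K_y$ \emph{before} comparing with $f$, and does so infinitesimally: it extends $d\rho(\zeta)g=-\xi_\zeta g$ complex-linearly to $\mathfrak k$, uses $\xi_\zeta(y)=0$ for $\zeta\in\mathfrak k_y$, and exponentiates. You instead first identify the matrix coefficient with $f$ near $K_0$ by the identity theorem, and then read the invariance off the geometry. Your route avoids the vector-field computation. Both need connectedness of $K_y$ for the same reason, namely that invariance is obtained directly only on the identity component. Both also extend to the weaker hypothesis $K_y=K_y^0\,(K_0)_y$, since for $h\in(K_0)_y$ and $g\in K_0$ one has $gh\in K_0$.
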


\begin{thmx}[Divisor criterion; general $G_0$]\label{thmC}
Let $M=\kappa\cap\gamma$ be a Matsuki orbit in a $G$-flag manifold $Z$.
\begin{enumerate}[label=\textup{(\roman*)}]
\item If some $K$-stable prime divisor $D\subset Z$ does not contain $\kappa$, then $\kappa\cap D=\emptyset$, and $M$ carries an infinite-dimensional algebra of holomorphic germs. Indeed, $Z\setminus D$ is the preimage of an affine variety under a surjective morphism.
\item If every $K$-stable prime divisor contains $\kappa$, and the $K$-stabilisers of points of the open $K$-orbit are connected, then every holomorphic function on a connected neighbourhood of $M$ in $Z$ is constant.
\end{enumerate}
\end{thmx}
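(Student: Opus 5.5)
For part (i), first note that $D$ is $K$-stable and $\kappa$ is a single $K$-orbit. Hence $\kappa\cap D$ is $K$-stable, so it is either empty or all of $\kappa$. Since $D\not\supset\kappa$, we get $\kappa\cap D=\emptyset$, and in particular $M\cap D=\emptyset$.

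To produce holomorphic germs, we use that a prime divisor on $Z=G/P$ defines a line bundle $L=\cO(D)$ with a section $s_D$ vanishing exactly on $D$. Because $D$ is $K$-stable, $s_D$ is a $K$-semi-invariant, after passing to a finite cover of $K$ or a power of $D$ if needed. Since $\mathrm{Pic}(Z)$ is generated by the effective $G$-linearised bundles, $L$ is globally generated.

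Then $Z\setminus D$ is the non-vanishing locus of the section $s_D$ of a globally generated line bundle. The map $\phi_L:Z\to\PP(H^0(Z,L)^*)$ is a surjective morphism onto its image $Y$, and it has connected fibres after Stein factorisation. We have $Z\setminus D=\phi_L^{-1}(Y\setminus H)$ for the hyperplane $H=\{s_D=0\}$, and $Y\setminus H$ is affine. Every nonconstant regular function on this affine variety (for instance $s/s_D$ with $s\in H^0(Z,L)$) pulls back to a holomorphic function on the neighbourhood $Z\setminus D$ of $M$. This gives infinitely many linearly independent germs as long as $\dim Y\ge1$, which holds because $D\ne\emptyset$ is an ample-or-semiample effective divisor with $L$ nontrivial. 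One further check is needed: the restrictions to $M$ do not collapse. For this I would argue that $M$ is not contained in a fibre of $\phi_L$. The fibre is a flag submanifold through the points of $M$, $K$-translates of $M$ fill $\kappa$, and the functions are non-constant on the $K$-stable set $\kappa$, which is Zariski dense in a $K$-orbit closure not mapped to a point. Identity on $K$-orbits (Theorem~\ref{thmB}, with complexification of $K_0$-translates) then shows the germs along $M$ remain independent.

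For part (ii), let $f$ be holomorphic on a connected neighbourhood $U$ of $M$. Averaging over $K_0$ after shrinking $U$ to a $K_0$-invariant neighbourhood, and expanding into $K_0$-isotypic components, it suffices to show that every $K_0$-finite holomorphic function is constant. The isotypic projections are holomorphic on $U$, and $f$ is determined by them near $M$, so by connectedness it is determined everywhere.

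By Theorem~\ref{thmB}, a $K_0$-finite function agrees near $M$ with a regular function on $\kappa$. I would rather apply Theorem~\ref{thmB} at a point $y$ of the open $K$-orbit $\Omega$ meeting $U$, which is where the stabiliser connectedness hypothesis is placed. To make this possible, pick $y\in U\cap\Omega$; this set is nonempty since $\Omega$ is dense. Then $K_0y\subset U$ after shrinking to a $K_0$-invariant tube. Theorem~\ref{thmB} gives a regular function $g$ on $\Omega$, extended $K$-equivariantly so that the span of its $K$-translates is finite-dimensional.

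Next, $Z\setminus\Omega$ is a union of $K$-stable prime divisors plus higher-codimension pieces. Regarded as a rational function on $Z$, $g$ has poles only along $K$-stable prime divisors. By hypothesis each such divisor contains $\kappa$, and so meets $M$. If $g$ had a pole along some divisor $D_i\supset M$, then $f$, which is holomorphic near $M$ and equal to $g$ on an open set, would contradict the pole: the identity theorem on $U$ would give $f=g$ on $U\cap\Omega$, so $g$ would be bounded near the points of $M\subset D_i$. Hence $g$ has no poles, so it is regular on all of $Z$ by normality and Hartogs, and is therefore constant because $Z$ is compact. The main obstacle is the last step: ensuring $U\cap\Omega$ is connected and reaches every relevant divisor near $M$. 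I would handle this by noting that $U\setminus(Z\setminus\Omega)$ is connected, since it is the complement of an analytic subset of the connected manifold $U$.
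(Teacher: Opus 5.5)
Your proof is correct in substance, but one auxiliary step in (i) should be replaced.

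\textbf{Part (i).} This follows the paper's route. By $K$-stability, $\kappa\cap D=\emptyset$. The bundle $L=\cO(D)$ is globally generated, and $Z\setminus D=\phi_L^{-1}(Y\setminus H)$ with $Y\setminus H$ affine and $Y$ not a point.

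The correct reason for global generation is that on $G/P$ every line bundle with a non-zero section is globally generated (Borel--Weil). The fact that $\mathrm{Pic}(Z)$ is generated by effective bundles does not imply this.

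Your ``further check'' is unnecessary and, as stated, false. Restrictions to $M$ can collapse, $M$ can lie in a fibre of $\phi_L$, and $\phi_L$ can be constant on $\kappa$. For example, for $\SU(p,q)$ on $\Gr_p(\C^{p+q})$ the $K$-orbit $\kappa=M=\{\Ep\}$ is a point disjoint from the $K$-stable divisor $\{W\cap\Em\ne0\}$. Invoking Theorem~\ref{thmB} there would also bring in a stabiliser hypothesis that (i) does not assume.

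The claim concerns germs, and the right argument is the identity theorem. The set $Z\setminus D$ is connected, so the pull-back $g\circ\phi_L$ of a non-constant regular function is non-constant on every non-empty open subset. A non-trivial linear relation among its powers on a neighbourhood of $M$ would force it to take finitely many values there, hence be locally constant, which is impossible.

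\textbf{Part (ii).} Here your route differs from the paper's.

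The paper goes through Proposition~\ref{thm:codim}. The image of the $K_0$-span of $f_\delta$ in $\C[\kappa_0]$ is $K$-stable, so the regular function $G_{f_\delta}$ extends holomorphically to $KU\supset Z\setminus Y_\kappa$. It then extends across $Y_\kappa$, which has codimension $\ge2$ by Lemma~\ref{lem:divisors}.

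You instead regard $g\in\C[\Omega]$ as a rational function on $Z$.
\begin{itemize}
\item Its polar divisor consists of prime divisors contained in $Z\setminus\Omega$. These are $K$-stable, hence contain $\kappa\supset M$.
\item Since $g=f_\delta$ on the connected set $U\cap\Omega$ and $f_\delta$ is bounded near $M$, none of them can be polar. This uses that a meromorphic function on a manifold is unbounded near every point of its polar set, by Riemann extension and factoriality of local rings.
\item So $g$ is regular on $Z$, hence constant.
\end{itemize}

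This is valid and shorter. It needs no $K$-equivariance argument and no description of $Y_\kappa$, since strata of $Z\setminus\Omega$ of higher codimension never carry poles. The paper's route gives more: the extension of $G_{f_\delta}$ to $KU\supset Z\setminus Y_\kappa$ holds without any codimension hypothesis, and the paper reuses it in Lemma~\ref{lem:polynomial} and in the proof of Theorem~\ref{thmG}.
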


\emph{On the stabiliser hypothesis.} Theorem~\ref{thmB} and Theorem~\ref{thmC}(ii) are stated for general real forms, and there the hypothesis on $K_y$ is a genuine restriction. Flag manifolds $G/P$ are simply connected, so the issue is not the topology of $Z$ but the component group $K_y/K_y^0$. It can be non-trivial, and it depends on the choice of $K$ within its isogeny class. Two examples:
\begin{itemize}
\item For $\SL_3(\R)$ on the full flag manifold of $\C^3$, $K=\mathrm{SO}_3(\C)$ and the stabiliser of a generic flag is $(\mathbb Z/2)^2$.
\item For $\SU(p,p)$ on $\Gr_p(\C^{2p})$ with $K=S(\GL_p\times\GL_p)$, the stabiliser of the graph of an isomorphism is $\{g\in\GL_p:\det g=\pm1\}$.
\end{itemize}
Remark~\ref{rem:components} weakens the hypothesis to the requirement that every component of $K_y$ meet $K_0$. In the second example this weaker condition holds: the stabiliser has two components, and each meets $K_0=S(\U(p)\times\U(p))$, since a unitary $g$ with $\det g=-1$ commuting with $A^*A$ gives a unitary element $(g,AgA^{-1})$. For $\SU(p,q)$ on Grassmannians we avoid the issue altogether: we work with $K=\GL_p(\C)\times\GL_q(\C)$, for which every stabiliser is connected (Lemma~\ref{lem:connected}). Hence Theorems~\ref{thmD}--\ref{thmH} carry no such hypothesis.

\begin{thmx}[Main theorem for $\SU(p,q)$]\label{thmD}
Let $M=M(a,b,c)\subset Z=\Gr_k(\C^{p+q})$ be a Matsuki orbit of $\SU(p,q)$.
\begin{enumerate}[label=\textup{(\roman*)}]
\item If $M$ is not of holomorphic type, every holomorphic function on any connected neighbourhood of $M$ in $Z$ is constant.
\item If $M$ is of holomorphic type with $k=p$, $b=0$, $c\ge1$, and $(c,p,q)\neq(p,p,p)$, then in the chart $\cA$ the orbit $M$ is the manifold of partial isometries $\Ep\to\Em$ of rank $c$. A holomorphic hull of $M$ in $Z$ \textup{(}Definition~\ref{def:hull}\textup{)} is
\[
\widehat M=\{\Phi\in\Hom(\Ep,\Em):\ \rank\Phi\le c,\ \|\Phi\|\le1\}=\overline{\cD}\cap\{\rank\le c\}.
\]
Every function holomorphic near $M$ extends holomorphically to a neighbourhood of $\widehat M$ in $Z$, and no point of $Z\setminus\widehat M$ can be added.
\item If $c=p=q$, then $M\cong\U(p)$ is totally real, and it is its own hull.
\end{enumerate}
The case $a=0$, $k=q$ is symmetric.
\end{thmx}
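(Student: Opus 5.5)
The plan is to split along the dichotomy of Definition~\ref{def:holtype}, using Theorem~\ref{thmC} for part (i) and Theorem~\ref{thmB} together with a Laurent-series argument in the central circle of $K_0$ for parts (ii) and (iii).

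\textbf{Part (i).} First I classify the $K$-stable prime divisors of $Z=\Gr_k(\C^n)$ for $K=\GL_p(\C)\times\GL_q(\C)$. The $K$-orbits are determined by the pair $(\dim W\cap\Ep,\dim W\cap\Em)$. The closure of such an orbit has codimension one only when it is a rank-drop locus of one of the projections $W\to\Ep$ or $W\to\Em$, which are maps between spaces of equal dimension. This happens exactly when $k=p$, giving $D_-=\{W\cap\Em\neq0\}$ (the vanishing of $\det(\mathrm{pr}_{\Ep}|_W)$), or when $k=q$, giving $D_+=\{W\cap\Ep\neq0\}$.

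Next I check which divisors contain which orbits. Points of $\kappa(a,b)$ satisfy $\dim W\cap\Em=b$ and $\dim W\cap\Ep=a$. Hence $\kappa(a,b)\subset D_-$ if and only if $b\ge1$, and $\kappa(a,b)\subset D_+$ if and only if $a\ge1$. So every $K$-stable prime divisor contains $\kappa$ precisely when $M$ is not of holomorphic type. In that case Theorem~\ref{thmC}(ii) applies, since the stabilisers are connected by Lemma~\ref{lem:connected}, and (i) follows.

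\textbf{Part (ii): the orbit and the algebra of functions.} Now let $k=p$ and $b=0$, so $\kappa=\kappa(a,0)\subset\cA$. Writing $W=\mathrm{graph}\,\Phi$, we have
\[
h|_W(u,u)=|u|^2-|\Phi u|^2 .
\]
Null directions of $h|_W$ are therefore the vectors $u$ with $|\Phi u|=|u|$, and the remaining directions are strictly positive. Consequently $M(a,0,c)$ is the set of partial isometries of rank $c$ (with $\Phi^*\Phi$ a rank-$c$ projection), and $\kappa=\{\rank\Phi=c\}$. The latter fact comes from $\dim W\cap\Ep=\dim\ker\Phi=p-c$.

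Let $f$ be holomorphic near $M$, and average it to get an invariant neighbourhood. Expand $f$ into $K_0$-isotypic components $f_\lambda$. Each $f_\lambda$ is $K_0$-finite, so by Theorem~\ref{thmB} its restriction to $\kappa$ is a regular function on the determinantal stratum $\{\rank=c\}$.

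\begin{itemize}
\item When $c<\min(p,q)$, the closure $\{\rank\le c\}$ is normal and its boundary has codimension $\ge2$. The function therefore extends to the affine cone and is a polynomial in $\Phi$.
\item When $c=p<q$, the stratum $\{\rank=p\}$ has complement of codimension $q-p+1\ge2$ in $\Hom(\Ep,\Em)$, so the same conclusion holds.
\item When $c=p=q$, the stratum is $\GL_p$, and negative powers of $\det$ survive; this is part (iii).
\end{itemize}

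\textbf{Part (ii): extension and sharpness.} To get extension, I use the centre $\{e^{i\theta}\}\subset K_0$, which acts on $\cA$ by $\Phi\mapsto e^{i\theta}\Phi$. Decomposing $f$ by this circle gives a Laurent series $\sum_m f_m$ with $f_m$ homogeneous of degree $m$ on $\{\rank\le c\}$. Homogeneous polynomials of negative degree vanish, so only $m\ge0$ occurs. Cauchy estimates on the nearby invariant tube $\{t\Phi:\ \Phi\in M,\ |t|\le1+\varepsilon\}$ show that the series converges on a neighbourhood of $\{t\Phi:\ |t|\le1+\varepsilon,\ \Phi\in\widehat M\}$. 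Here I use the Kaup--Zaitsev description of $\widehat M$ as the polynomial hull of $M$ inside the cone $\{\rank\le c\}$: every point of $\overline\cD\cap\{\rank\le c\}$ is reached by a $K_0$-orbit of scalings $t\Phi$ with $\Phi$ in a convex combination of partial isometries of rank $\le c$.

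For the off-cone directions, I thicken the extension into a neighbourhood in $Z$. I do this by holomorphically retracting a neighbourhood of $\{\rank\le c\}\cap\overline\cD$ along a $K$-equivariant normal structure of $\kappa$, or alternatively by noting that $f-\tilde f$ vanishes on $\kappa$ and can be written locally in terms of the $(c+1)$-minors.

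Maximality has two parts.
\begin{itemize}
\item Points with $\|\Phi\|>1$ but $\rank\Phi\le c$ are excluded by the peaking polynomials $\la\Phi u,v\ra$.
\item Points with $\rank\Phi>c$ are excluded because some $(c+1)$-minor $m$ is nonzero there. The function $1/(m-\delta)$, or $\exp$ of a suitable multiple of $1/m$-type functions holomorphic near $M$ where $m$ vanishes identically, is holomorphic near $M$ and cannot extend.
\end{itemize}
I expect this last sharpness step, and making the thickening from $\kappa$ to a neighbourhood in $Z$ uniform up to the boundary of $\widehat M$, to be the main obstacle.

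\textbf{Part (iii).} When $c=p=q$, $M\cong\U(p)$ is totally real of maximal dimension in $\cA\cong\mathfrak{gl}_p$. The function $\det^{-1}$ is holomorphic near $M$, and the functions $\det^{\pm1}$ together with the polynomial entries separate points and give peak functions at every point off $\U(p)$. Hence the hull is $M$ itself.
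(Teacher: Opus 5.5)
\textbf{Part (i).} This is correct and is the paper's route (Corollary~\ref{cor:Kdivisors} with Theorem~\ref{thmC}(ii)). The divisor classification should be backed by the Schubert codimensions $(a+1)(q-k+a+1)$ and $(b+1)(p-k+b+1)$ of Lemma~\ref{lem:schubert}, rather than by the rank-drop heuristic.

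\textbf{The gap: (H1) in part (ii).} It is exactly the step you flagged. Applying Theorem~\ref{thmB} at points of $M$ only tells you that $f_\lambda|_\kappa$ is regular on $\kappa=R_c$, hence a polynomial on $V_c$. For $c<\min(p,q)$ the cone $V_c$ is a proper subvariety of $\cA$, and $f_\lambda$ is not determined off $V_c$ by its restriction to $\kappa$. So your series, and its Cauchy estimates, only extend $f|_\kappa$ along $V_c$. (For $c=\min(p,q)$, $\kappa$ is open and there is no issue.) Neither proposed repair works:
\begin{enumerate}[label=(\alph*)]
\item No neighbourhood of $0\in\widehat M$ retracts holomorphically onto $V_c$. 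Indeed $V_c$ is singular along $V_{c-1}$, and holomorphic retracts of manifolds are submanifolds.
\item Writing $f-\tilde f=\sum m_ig_i$ with $(c+1)$-minors $m_i$ only transfers the same extension problem to the germs $g_i$.
\end{enumerate}

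\textbf{The missing idea.} Algebraise on the open $K$-orbit $\kappa_0=R_{\min(p,q)}$ instead of on $\kappa$, as in Lemma~\ref{lem:polynomial}.
\begin{enumerate}[label=(\arabic*)]
\item Since $\kappa\subset\overline{\kappa_0}$, the set $U\cap\kappa_0$ is connected and dense in $U$. Theorem~\ref{thmB} at its points gives $G\in\C[\kappa_0]$ with $f_\lambda=G$ there.
\item The image of the $K_0$-span of $f_\lambda$ in $\C[\kappa_0]$ is $K$-stable. This extends $G$ holomorphically to $KU\supset\Hom(\Ep,\Em)\setminus V_{c-1}$.
\item If $p\neq q$, the complement of $\kappa_0$ has codimension $\ge2$, so $G$ is a polynomial. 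If $p=q>c$, then $G=P/\det^m$ is holomorphic at the generic points $R_{p-1}$ of $\{\det=0\}$, so $m=0$.
\end{enumerate}
Hence $f_\lambda$ itself is a polynomial on all of $\cA$.

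\textbf{Convergence on a neighbourhood.} This needs estimates off the cone. Apply the polydisc maximum principle to $z\mapsto P(\iota(z)+\zeta)$ (Lemma~\ref{lem:maxprinciple}). This bounds $|P_\lambda|$ on $(1+\varepsilon)K_c+B_\eta$ by its sup on an $\eta$-neighbourhood of $(1+\varepsilon)M$. Then use homogeneity together with the polynomial growth of $\sum_{e_\delta=e}d_\delta^2$. Your appeal to convex combinations of partial isometries is not a valid substitute: a convex combination of points of $M$ need not lie in its polynomial hull.

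\textbf{Smaller omissions.}
\begin{enumerate}[label=(\alph*)]
\item In (H2) for part (ii), and in part (iii), you omit the points of $Z\setminus\cA$. Use $p_J/p_0$ in Pl\"ucker coordinates, which is unbounded near such $y$.
\item In the rank case take $F=m-m(y)$. The ``$\exp$ of $1/m$'' variant makes no sense, since $m\equiv0$ on $M$.
\item In (iii) the exhaustion should be explicit:
\begin{itemize}
\item singular $y$, via $\det^{-1}$;
\item $\|y\|>1$, via $\la\Phi u,v\ra$;
\item $\|y^{-1}\|>1$, via $\la\Phi^{-1}v,u\ra$.
\end{itemize}
\end{enumerate}
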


\begin{thmx}[Regular functions on $K$-orbits]\label{thmE}
For the $K$-orbit $\kappa(r,s)=\{W:\dim W\cap\Ep=r,\ \dim W\cap\Em=s\}$ with $d=k-r-s\ge1$,
\[
\C[\kappa(r,s)]\ne\C\iff(s=0\text{ and }k=p)\ \text{or}\ (r=0\text{ and }k=q).
\]
That is, $\kappa(r,s)$ carries non-constant regular functions if and only if it is a rank stratum $\{\rank\Phi=k-r\}$ of a Harish-Chandra chart. Consequently, for $M$ not of holomorphic type and $c\ge1$, even functions holomorphic only on a neighbourhood of $M$ inside $\kappa$ are constant.
\end{thmx}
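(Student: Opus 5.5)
The plan is to compute $\C[\kappa(r,s)]$ directly from the homogeneous structure $\kappa(r,s)=K/K_W$ with $K=\GL_p(\C)\times\GL_q(\C)$, using the fact that $K_W$ is connected (Lemma~\ref{lem:connected}). Then $\C[K/K_W]=\C[K]^{K_W}$, and we may use a fibration to reduce to a smaller orbit.

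\textbf{Step 1: fibration.} Send $W\in\kappa(r,s)$ to $(W\cap\Ep,\,W\cap\Em)\in\Gr_r(\Ep)\times\Gr_s(\Em)$. This target is a projective variety, so every regular function on $\kappa(r,s)$ is constant on fibres, provided it descends. More precisely, we argue as follows. For fixed $A=W\cap\Ep$ and $B=W\cap\Em$, the fibre is identified with graphs: $W/(A\oplus B)$ is a $d$-plane in $(\Ep/A)\oplus(\Em/B)$ meeting neither summand. Hence the fibre is the set of injective linear maps $\Phi$ from a $d$-plane $V\subset\Ep/A$ to $\Em/B$, and its image meets $V$ trivially by construction. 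Equivalently, the fibre is
\[
\{V\in\Gr_d(\C^{p-r}),\ \Phi:V\hookrightarrow\C^{q-s}\}.
\]

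\textbf{Step 2: the ``if'' direction.} If $s=0$ and $k=p$, every $W$ is a graph over $\Ep$, so $\kappa(r,0)\subset\cA\cong\Hom(\Ep,\Em)$ is exactly the rank stratum $\{\rank\Phi=p-r\}$. The matrix entries of $\Phi$ restrict to non-constant regular functions, since $d\ge1$ gives a nonzero rank. The case $r=0$, $k=q$ is symmetric.

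\textbf{Step 3: the ``only if'' direction.} Assume we are in neither case. Take $f\in\C[\kappa(r,s)]$ and decompose it into $K$-isotypic components, which are again regular. It suffices to show that each highest-weight vector is constant. Consider $K$-finite functions on $K/K_W$; these are matrix coefficients, that is, $V^{K_W}$ for irreducible $V$. So the key computation is whether an irreducible representation of $\GL_p\times\GL_q$ has nonzero $K_W$-fixed vectors.

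Here $K_W$ contains the parabolic-type subgroup preserving $A\subset\Ep$ and $B\subset\Em$, and it acts on the graph part via pairs $(g_1,g_2)$ with $g_2\Phi=\Phi g_1$ on $V$. If $r+d<p$, there is a direction in $\Ep$ outside $A\oplus V$. The stabiliser then contains a full parabolic $P_1\subset\GL_p$ together with extra unipotents, which forces the invariants of $\GL_p$ to be only characters.

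Cleaner route: show the generic fibre of $K/K_W\to K/H$ is projective or unipotent-quotient. Concretely, if $k<p$ (with $s$ arbitrary) or $s>0$, the subspace $A\oplus V=\mathrm{pr}_{\Ep}W$ has dimension $k-s<p$ when $k-s<p$. Then the map $W\mapsto \mathrm{pr}_{\Ep}(W)\in\Gr_{k-s}(\Ep)$ is $K$-equivariant to a projective variety. Its fibre consists of $W$ with fixed $\Ep$-projection $P$ and fixed $B$, and is a homogeneous space of $\GL(P)\times\GL_q$, namely an orbit in the Hom-chart of $\Gr_{k-s}(P\oplus\Em/B\ldots)$. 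This fibre carries functions, so this naive descent fails. The real argument must therefore use the $K_W$-invariants: the stabiliser contains the unipotent radical $\Hom(\Ep/P,P)$ together with the stabiliser of $P$ in $\GL_p$. Invariants of a representation $V_\lambda$ of $\GL_p$ under the full parabolic $\mathrm{Stab}(P)$'s derived part, combined with the coupling $g_2\Phi=\Phi g_1$, which links $\det$ characters, should give only constants. This is the step I expect to be the main obstacle. I would attempt it by computing $\C[K]^{K_W}$ via Borel–Weil: a $K_W$-fixed vector in $V_\lambda\otimes V_\mu$ must be fixed by a parabolic-type group whose Levi contains $\SL(\Ep/P)$ and the unipotents $\Hom(\Ep/P,P)$ and $\Hom(B,\Em/B)$. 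By the highest-weight argument, this forces $\lambda,\mu$ to be one-dimensional characters of $\GL_p,\GL_q$. The coupling then forces these characters to be trivial, because a scalar in $\GL(\Ep/P)$ or in $\GL(B)$ is unconstrained, and when $s=0$, $k=p$ fails one such free scalar exists.

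\textbf{Step 4: consequence.} Given $M$ not of holomorphic type with $c\ge1$, Theorem~\ref{thmB} shows that $K_0$-finite holomorphic functions near $M$ in $\kappa$ are regular on $\kappa$, hence constant by Step 3. Since $K_0$-finite functions are dense (Fourier expansion along compact $K_0$), every such holomorphic function is constant.
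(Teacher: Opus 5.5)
Your Step 2 (the ``if'' direction) and Step 4 (the consequence, via Theorem~\ref{thmB} in $\kappa$ applied to $K_0$-finite pieces) are fine. The gap is Step 3, which carries the whole ``only if'' direction.

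\textbf{The key inference is false.} You claim that a $K_W$-fixed vector in $V_\lambda\boxtimes V_\mu$, being fixed by a group containing $\SL(\Ep/P)$, $\Hom(\Ep/P,P)$ and $\Hom(B,\Em/B)$, forces $\lambda,\mu$ to be characters. You then let a free scalar on $\Ep/P$ or on $B$ kill those characters. This proves too much. Take $q<p$, $k=q$ and $\kappa(0,0)$.
\begin{itemize}
\item Here $W$ is the graph of an injective $\Psi:\Em\to\Ep$, $P=\operatorname{im}\Psi\ne\Ep$, and your $B=W\cap\Em$ is $0$.
\item The stabiliser $K_W=\{(g_+,g_-):g_+\Psi=\Psi g_-\}$ contains $\Hom(\Ep/P,P)$ and all of $\GL(\Ep/P)$. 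So every hypothesis of your inference holds, and your free scalar exists (note $s=0$ but $k\ne p$).
\item Yet the entries $\Psi\mapsto\ell(\Psi v)$ are non-constant regular functions spanning the $K$-type $\Ep^*\otimes\Em$, which is not a character.
\end{itemize}
Nothing in your reasoning uses that you are in neither exceptional case, so it cannot separate the cases. Two further points in the same direction:
\begin{itemize}
\item $K_W\cap\GL(\Ep)$ is not a full parabolic. It acts on $P/(W\cap\Ep)$ only through the coupling with $\GL(\Em)$.
\item Step~1's ``constant on fibres, provided it descends'' is unsupported. The fibres carry only constants when $d<p-r$ and $d<q-s$, and not otherwise.
\end{itemize}

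\textbf{The missing idea: a two-sided sign squeeze on the middle block.} Let $W_\pm=W\cap E_\pm$, let $P_\pm$ be the projection of $W$ to $E_\pm$, and let $U_\pm$ be the full unipotent radicals of the parabolics stabilising the flags $W_\pm\subset P_\pm\subset E_\pm$. Then
\begin{itemize}
\item $K_W$ is $U_+\times U_-$ times the Levi part $\GL(W_+)\times\GL(\Ep/P_+)\times\GL(W_-)\times\GL(\Em/P_-)\times\GL_d$, with $\GL_d$ embedded diagonally via the coupling.
\item $V_\lambda^{U_+}=V_{\lambda^{(1)}}\otimes V_{\lambda^{(2)}}\otimes V_{\lambda^{(3)}}$, with blocks of sizes $r,d,p-r-d$. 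Likewise $V_\mu^{U_-}$ has blocks of sizes $s,d,q-s-d$.
\item Levi invariance kills whichever outer blocks are present.
\item The diagonal $\GL_d$ forces $\mu^{(2)}=(-\lambda_{r+d},\dots,-\lambda_{r+1})$.
\item Dominance then gives: the entries of $\lambda^{(2)}$ are $\le0$ if $r\ge1$ or $q-s-d\ge1$, and $\ge0$ if $s\ge1$ or $p-r-d\ge1$.
\end{itemize}
Since $\lambda^{(2)}=0$ forces $\lambda=\mu=0$, non-trivial invariants exist exactly when one of these two sets of conditions is void. That means $r=0,\ k=q$ or $s=0,\ k=p$.

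Carried out, this would be a uniform alternative to the paper's case split:
\begin{itemize}
\item $d<m_1,m_2$: Cauchy's formula and Lemma~\ref{lem:vanish};
\item $d=m_1<m_2$ and its mirror: $H^0(S_\lambda Q_+)\otimes H^0(S_\lambda Q_-^*)$;
\item $d=m_1=m_2$: Borel--Weil.
\end{itemize}
In your sketch, however, the middle block never appears, and it carries all the invariants in the holomorphic cases. The one-sided free-scalar criterion you put in its place gives the wrong answer, as the example above shows.
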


\begin{thmx}[Global CR functions; degree zero]\label{thmF}
Let $c\ge1$, let $CR^\infty(M)$ be the space of smooth CR functions on $M$, and let $CR^0(M)$ be the space of continuous CR functions \textup{(}in the sense of distributions\textup{)}.
\begin{enumerate}[label=\textup{(\roman*)}]
\item The $K_0$-finite elements of $CR^\infty(M)$ are exactly the restrictions of elements of $\C[\kappa]$.
\item $CR^\infty(M)=\C$ if and only if $M$ is not of holomorphic type.
\item The restriction map $\cO(\kappa)\to CR^\infty(M)$ is injective. It is bijective if and only if $M$ is not of holomorphic type.
\item $CR^0(M)=\C$ if and only if $M$ is not of holomorphic type.
\item If $M$ is of holomorphic type with $k=p$, $b=0$ and $(c,p,q)\ne(p,p,p)$, then $CR^0(M)$ is the uniform closure $\cP(M)$ of the holomorphic polynomials in $\Phi$. Its maximal ideal space is the hull $\widehat M=\overline\cD\cap\{\rank\le c\}$ of Theorem~\ref{thmD}, and restriction is an isometric isomorphism $\cP(\widehat M)\to CR^0(M)$. If $c=p=q$, then $CR^0(M)=C(M)$.
\end{enumerate}
\end{thmx}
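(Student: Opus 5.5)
The five parts of Theorem~\ref{thmF} will be proved in order, and each reduces to results already stated. The common input is $K_0$-equivariance. Since $M$ is a compact $K_0$-orbit, $CR^\infty(M)$ is a Fréchet $K_0$-module, so its $K_0$-finite vectors are dense (Peter--Weyl). Moreover, a CR function is determined by its isotypic components.

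For (i), let $f\in CR^\infty(M)$ be $K_0$-finite. Write $f=\sum_i c_i(g)\,f_i$ on the finite-dimensional span of its translates. Then $f(g\cdot x_0)$ is a matrix coefficient of a finite-dimensional $K_0$-representation. Such coefficients extend to holomorphic matrix coefficients of the complexified representation of $K$, which are regular functions on $K$. The CR condition is what makes this extension right-invariant under $K_{x_0}$. By Theorem~\ref{thmA}, $M$ is generic in $\kappa$, so the $\bar\partial_b$-equations along $M$ say exactly that the extended function on $K$ is annihilated by the anti-holomorphic part of $\mathfrak k_{x_0}$. Connectedness of $K_{x_0}$ (Lemma~\ref{lem:connected}, for $K=\GL_p\times\GL_q$) then gives descent to a regular function on $\kappa=K/K_{x_0}$. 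This is the CR analogue of Theorem~\ref{thmB}, and I would reuse its proof with ``holomorphic near $K_0y$'' replaced by ``CR on $K_0y$''. Uniqueness holds because $M$ is generic, hence totally real in no direction transverse to $\kappa$, so it is a set of uniqueness for holomorphic functions on $\kappa$. The converse inclusion is trivial.

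Part (iii) injectivity follows from the same uniqueness statement: a holomorphic function on the connected complex manifold $\kappa$ vanishing on the generic submanifold $M$ vanishes. For (ii), by density and (i), $CR^\infty(M)=\C$ iff $\C[\kappa]|_M=\C$. Theorem~\ref{thmE} says $\C[\kappa(a,b)]\neq\C$ exactly in the holomorphic-type cases (with $r=a$, $s=b$). In the holomorphic case the coordinate functions of $\Phi$ are non-constant on $M$, since partial isometries of rank $c\ge1$ are not a single point. For (iii) surjectivity in the non-holomorphic case, both sides are $\C$. In the holomorphic case $\kappa$ is a rank stratum, and I would exhibit a smooth CR function not extending to all of $\kappa$. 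A natural candidate is a holomorphic function on a neighbourhood of $\widehat M$ that is singular somewhere on $\kappa\setminus\overline\cD$, for example $(1-\mathrm{tr}(\Phi^*\!\cdot\!\text{fixed})\ldots)^{-1}$-type functions, or simply $1/(2-\phi_{11})$. It is holomorphic near $\overline\cD$ but has poles on $\kappa$.

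For (iv) and (v), I would use continuous CR functions and approximation. A $K_0$-average (convolution with an approximate identity on $K_0$) of a continuous CR function is smooth and CR and converges uniformly. So $CR^0(M)$ is the uniform closure of $CR^\infty(M)$, and by density of $K_0$-finite vectors it is the closure of $\C[\kappa]|_M$. In the non-holomorphic case this gives $\C$, which is (iv). In the holomorphic case I need $\C[\kappa]|_M\subset\cP(M)$. Here $\kappa=\{\rank\Phi=c\}\subset\Hom(\Ep,\Em)$, and regular functions on this stratum are, I expect, restrictions of polynomials. If not, they are at least uniformly approximable on $M$ by polynomials via Theorem~\ref{thmD}(ii)'s extension to a neighbourhood of the polynomially convex set $\widehat M$ plus Oka--Weil. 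This gives $CR^0(M)=\cP(M)$. The maximal ideal space of $\cP(M)$ is the polynomial hull, which is $\overline\cD\cap\{\rank\le c\}$ by Kaup--Zaitsev outside the totally real case. Isometry of restriction $\cP(\widehat M)\to\cP(M)$ is the maximum principle on the hull (Shilov boundary contained in $M$). If $c=p=q$, then $M\cong\U(p)$ is totally real of maximal dimension, so the CR condition is vacuous and $CR^0(M)=C(M)$.

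The main obstacle is (i): making the CR descent rigorous, namely showing that the tangential Cauchy--Riemann equations force right $K_{x_0}$-invariance of the holomorphically extended matrix coefficient. This is also where genericity from Theorem~\ref{thmA} and connectedness of stabilisers are used. I would first try to realise it via the extension to a tube $K_0\exp(i\mathfrak k_0\cap\ldots)$ and the identity theorem.
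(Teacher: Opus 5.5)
\paragraph{The gap: part (v).}
Parts (i)--(iv) of your plan are sound. Part (v) has a genuine gap: the inclusion $\C[\kappa]|_M\subset\cP(M)$. You leave ``regular functions on $R_c$ are restrictions of polynomials'' as an expectation, and your fallback does not work.

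Theorem~\ref{thmD}(ii) extends functions that are holomorphic on an \emph{open} neighbourhood of $M$ in $Z$. For $1\le c<\min(p,q)$ the orbit $\kappa=R_c$ has positive codimension in $\cA$, so an element of $\C[R_c]$ is not such a function. Producing an ambient extension off $R_c$ near $M$ is exactly the missing content. The fallback is valid only when $\kappa$ is open, i.e.\ $c=\min(p,q)$ with $p\ne q$.

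This step is also precisely where the hypothesis $(c,p,q)\ne(p,p,p)$ must enter. For $c=p=q$ one has $\det^{-1}\in\C[\GL_p]$, but $\det^{-1}|_{\U(p)}\notin\cP(\U(p))$ by Lemma~\ref{lem:maxprinciple} evaluated at $\Phi=0$.

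The paper closes the gap with Lemma~\ref{lem:regpoly}:
\begin{enumerate}
\item Decompose $F$ into eigenfunctions of the central torus.
\item For $c<\min(p,q)$, pull back along $\mu(A,B)=BA$ from the pairs with $A$ surjective and $B$ injective. The complement of this set has codimension $\min(p,q)-c+1\ge2$, so the pullback extends across it to a $\GL_c$-invariant polynomial. The first fundamental theorem then writes it as a polynomial in $BA$.
\item For $c=p<q$ or $c=q<p$, $R_c$ is itself open with complement of codimension $\ge2$.
\end{enumerate}
Equivalently, $V_c$ is normal and $V_{c-1}$ has codimension $p+q-2c+1\ge2$ in $V_c$.

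With this lemma your argument for (v) goes through. You quote the identification of the polynomial hull from Kaup--Zaitsev; that citation is legitimate. The paper instead proves it directly via the $(c+1)$-minors, convexity of the operator-norm ball, and Lemma~\ref{lem:maxprinciple}.

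\paragraph{Part (i): a different route that works.}
Your route to (i) is more direct than the paper's, and it works. The step you flag as the main obstacle closes without any tube. For $\zeta=X+iY\in\mathfrak k_{x_0}$ one has $\xi_X(x_0)=-J\xi_Y(x_0)$, so $v:=\xi_Y(x_0)\in H_{x_0}M$. For every CR function $g$ in the span $V$ of translates,
$\mathrm{ev}_{x_0}(d\rho(\zeta)g)=(Jv)g-i\,vg=-i(v+iJv)g=0$.
The rest is the argument of Theorem~\ref{thmB}, using that $K_{x_0}$ is connected.

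The paper instead observes that $K_0$-finite CR functions are real-analytic, extends them holomorphically to a neighbourhood of $M$ in $\kappa$, and then applies Theorem~\ref{thmB}. Your version avoids the real-analytic CR extension theorem.

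\paragraph{Other shortcuts.}
Your shortcuts in (iii) are also fine:
\begin{itemize}
\item In the non-holomorphic case, surjectivity follows at once from injectivity together with (ii).
\item In the holomorphic case, $1/(2-\la\Phi e_1,e_{p+1}\ra)$ (the paper's example) refutes surjectivity, including when $c=p=q$. You need to note that $\{\la\Phi e_1,e_{p+1}\ra=2\}$ meets $R_c$, and apply uniqueness to $F\cdot(2-\la\Phi e_1,e_{p+1}\ra)-1$.
\end{itemize}
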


\begin{thmx}[Line bundles]\label{thmG}
Let $M$ be not of holomorphic type, let $m\in\mathbb Z$, and let $U$ be a connected neighbourhood of $M$ in $Z$. Every holomorphic section of $\cO_Z(m)$ over $U$ is the restriction of a unique section in $H^0(Z,\cO_Z(m))$; in particular it vanishes if $m<0$.
\end{thmx}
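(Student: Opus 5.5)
We reduce Theorem~\ref{thmG} to the degree-zero statement, Theorem~\ref{thmD}(i), by dividing a section by a suitable global section. The key steps are as follows.

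\textbf{Case $m>0$.} Let $s$ be a holomorphic section of $\cO_Z(m)$ over a connected neighbourhood $U$ of $M$. Choose a $K$-stable, or at least $K_0$-averaged, nonzero global section $t\in H^0(Z,\cO_Z(m))$ that does not vanish identically on $U$. The quotient $s/t$ is then a meromorphic function on $U$. We cannot quote Theorem~\ref{thmD}(i) directly, because $s/t$ may have poles. We therefore argue through $K_0$-finiteness instead.

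Decompose $s$ into $K_0$-isotypic components by averaging over $K_0$. To do this, first shrink $U$ to a $K_0$-invariant connected neighbourhood, for instance a $K_0$-saturated tube; this is possible because $M$ is a compact $K_0$-orbit. Each component $s_\lambda$ is a holomorphic section on $U$, and $s$ is the sum of these components near $M$. It therefore suffices to show that each $K_0$-finite section extends to $Z$.

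\textbf{Reduction to functions on a bundle.} For a $K_0$-finite section, pass to the total space $L^\times$ of the tautological $\C^*$-bundle over $Z$, namely the punctured cone over $Z$ in the Pl\"ucker embedding. There a section of $\cO(m)$ becomes a function that is homogeneous of degree $m$. The argument of Theorem~\ref{thmB} then applies to the $\tilde K=K\times\C^*$-orbit of a point over $y\in M$: the stabiliser is an extension of $K_y$ by a finite cyclic group, and we use Lemma~\ref{lem:connected} or Remark~\ref{rem:components} to control components. This algebraises the section near $M$ to a regular section of $\cO(m)$ on $\kappa$.

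\textbf{Extension across the boundary.} Theorem~\ref{thmC}(ii)/\ref{thmE} say that $\kappa$ is not of holomorphic type. In that situation every $K$-stable divisor contains $\kappa$, so the complement of $\kappa$ in $\overline\kappa$, and of the open $K$-orbit in $Z$, has no divisorial component that could carry poles. A regular section on $\kappa$ extends to the open $K$-orbit by the same translation argument used in Theorem~\ref{thmC}(ii). It then extends to all of $Z$ by Hartogs' theorem, since the complement has codimension at least $2$ and $Z$ is normal.

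\textbf{Uniqueness and $m<0$.} Uniqueness follows from the identity theorem on connected $U$. For $m<0$ we have $H^0(Z,\cO_Z(m))=0$, so the section vanishes.

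\textbf{Main obstacle.} The hardest step is the divisor step: a regular section of $\cO(m)$ on the open $K$-orbit could a priori have poles along $K$-stable divisors, and these divisors are exactly where Theorem~\ref{thmC}(i) produces nonconstant functions. We would handle it as follows. A section with a pole of order $e$ along a $K$-stable divisor $D\supset\kappa$ becomes a regular function near $M$ after multiplication by the $e$-th power of the defining section of $D$. Since $D$ contains $\kappa\supset M$, that defining section vanishes on $M$. Comparing orders of vanishing along $M$, using that $U$ meets $D$ only in a thin set, should force $e\le 0$.

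If this order comparison fails, we would fall back on the explicit Pl\"ucker and $\Hom$-chart descriptions of the $K$-orbits from Theorem~\ref{thmE}. The idea is to show directly that $\C[\kappa]$-modules of sections are spanned by restrictions of Pl\"ucker polynomials.
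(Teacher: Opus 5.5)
\textbf{There is a genuine gap.} Your reduction to $K_0$-finite sections and the passage to degree-$m$ homogeneous functions on the punctured Pl\"ucker cone are the right framework. But the step that carries the theorem, getting from a neighbourhood of $M$ to all of $Z$, is not established.

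\emph{Wrong orbit for the algebraisation.} You algebraise at $y\in M$ on the orbit $\pi^{-1}(\kappa)$. That gives only a regular section of $\cO_Z(m)$ over $\kappa$. In general $\kappa$ is a proper submanifold, so this does not determine the section on any open set. There is also no translation argument that extends a regular section from $\kappa$ to the open orbit $\kappa_0$. The algebraisation has to be done on the open orbit: apply Theorem~\ref{thmB} on the cone at points over $U\cap\kappa_0$, which is connected and dense in $U$, as in the proof of Proposition~\ref{thm:codim}.

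\emph{False claim about divisors.} Your claim that $Z\setminus\kappa_0$ has no divisorial component is false. For $k=p<q$ and $b\ge1$ the orbit is not of holomorphic type, yet $\Sigma_1(\Em)=Z\setminus\cA$ is a $K$-stable prime divisor contained in $Z\setminus\kappa_0$, and it contains $\kappa(a,b)$. Not being of holomorphic type means every $K$-stable prime divisor \emph{contains} $\kappa$, not that there are none. So the possible poles along such divisors, which you defer to the ``main obstacle'', are the whole content of the proof. Your treatment (``comparing orders of vanishing along $M$ \dots\ should force $e\le0$'') is not an argument. It is unclear what is being compared, and the thinness of $U\cap D$ in $U$ points the wrong way.

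\emph{The missing idea.} What matters is that $U\cap D$ is a non-empty \emph{open} subset of $D$, because $M\subset\kappa\subset D$. The algebraised section $G$ agrees with $s_\delta$ on the dense set $U\cap\kappa_0$, and $s_\delta$ is holomorphic on all of $U$. Hence $G$ has no pole along $D$.

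The paper organises this through translations. The image of the $K_0$-span of $s_\delta$ in the regular functions on the cone over $\kappa_0$ is $K_0$-stable, hence $K$-stable. So for each $k\in K$, $G\circ k^{-1}$ is the algebraisation of a section holomorphic on $U$. Thus $G$ extends holomorphically to $k^{-1}U$ for every $k$, hence to $\pi^{-1}(KU)$. Every orbit whose closure contains $\kappa$ meets $U$, so this set contains $\pi^{-1}(Z\setminus Y_\kappa)$. Moreover $\codim Y_\kappa\ge2$ exactly because $M$ is not of holomorphic type (proof of Theorem~\ref{thm:vanishing}). After Riemann extension, the paper crosses the vertex of the cone and uses Cartan's Theorem~B to land in $H^0(Z,\cO_Z(m))$. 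Your shortcut of extending directly as a section over the manifold $Z$ would also work at that stage.

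\emph{Two smaller points.}
\begin{enumerate}
\item The stabiliser in $K\times\C^*$ of a point over $W$ is $\{(g,t):g\in K_W,\ t\det(g|_W)=1\}$. It projects isomorphically onto $K_W$, so it is connected; it is not an extension by a cyclic group.
\item To conclude that $s$ equals the sum of its isotypic components, you need that only finitely many of them are non-zero. This holds because each extends to $H^0(Z,\cO_Z(m))$, which is finite-dimensional.
\end{enumerate}
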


\begin{thmx}[Cycle spaces]\label{thmH}
\begin{enumerate}[label=\textup{(\roman*)}]
\item Let $g\in G$ be such that $g\Ep$ is $h$-positive and $g\Em$ is $h$-negative definite. Then $g\kappa(a,b)\cap O(a,b,c)$ is the compact bundle of planes $W_+\oplus W_-\oplus N$ with $W_+\in\Gr_a(g\Ep)$, $W_-\in\Gr_b(g\Em)$ and $N\in\Iso_c((W_+\oplus W_-)^{\perp_h})$. Write $\cD_+$ for the set of $h$-positive $p$-planes and $\cD_-$ for the set of $h$-negative $q$-planes. Under $gK\mapsto(g\Ep,g\Em)$, the set of such $g$ corresponds to $\cD_+\times\cD_-$, which is the Akhiezer--Gindikin domain of $\SU(p,q)$ \cite{BHH03}, \cite{FHW}. So the cycle space of every $G_0$-orbit contains $\Om$.
\item The $K$-orbits of holomorphic type in the sense of Gindikin--Matsuki are the closed orbits $\kappa(k,0)=\Gr_k(\Ep)$ and $\kappa(0,k)=\Gr_k(\Em)$.
\item The Matsuki orbits of holomorphic type in the sense of Definition~\ref{def:holtype} with $c\ge1$ have non-closed dual $K$-orbits, and their cycle spaces equal $\Om$. The two notions of holomorphic type meet only in the base points $\{\Ep\}$ and $\{\Em\}$.
\end{enumerate}
\end{thmx}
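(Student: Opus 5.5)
Theorem~\ref{thmH} has three parts, and I will prove them in order by direct linear algebra in the splitting $\C^n=\Ep\oplus\Em$.

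\emph{Part (i).} Put $E'_+=g\Ep$ and $E'_-=g\Em$. These form a complementary pair, positive and negative definite, but they need not be $h$-orthogonal. The translate $g\kappa(a,b)$ consists of the $W$ with $\dim W\cap E'_+=a$ and $\dim W\cap E'_-=b$.

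\textbf{Containment of the bundle.} Take $W=W_+\oplus W_-\oplus N$ with $N$ totally isotropic in $(W_+\oplus W_-)^{\perp_h}$. Then $h|_W$ is the orthogonal sum of $h|_{W_+\oplus W_-}$, which is nondegenerate with signature $(a,b)$, and the zero form on $N$. So $W\in O(a,b,c)$.

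To check that the intersection dimensions are exactly $a$ and $b$, suppose $v\in W\cap E'_+$. Then $v$ is $h$-positive or zero. Write $v=w_++w_-+n$. Since $N$ is $h$-orthogonal to $W_\pm$ and isotropic,
\[
h(v,v)=h(w_++w_-,w_++w_-).
\]
I will argue that $v$ must lie in $W_+$ by studying the positive cone of $h|_W$. The radical of $h|_W$ is $N$. A positive definite subspace of $W$ meets $N\oplus W_-$ trivially, so it has dimension at most $a$. Hence $\dim W\cap E'_+\le a$, and equality holds because $W_+\subset W\cap E'_+$. The same argument gives $\dim W\cap E'_-=b$.

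\textbf{Reverse containment.} Let $W\in g\kappa(a,b)\cap O(a,b,c)$. Set $W_+=W\cap E'_+$ and $W_-=W\cap E'_-$. These have dimensions $a$ and $b$ and signatures $(a,0)$ and $(0,b)$. I claim $h|_{W_+\oplus W_-}$ is nondegenerate. The reason is that $W_+\oplus W_-\subset W$, and its positive and negative indices already exhaust those of $W$, which has signature $(a,b,c)$.

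Nondegeneracy gives $W=(W_+\oplus W_-)\oplus N$ with $N=W\cap(W_+\oplus W_-)^{\perp_h}$, and $N$ is null of dimension $c$, hence totally isotropic. This identifies the intersection with the stated bundle. The bundle is compact, being fibred over a product of Grassmannians with compact fibres $\Iso_c$.

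\textbf{The parameter set.} The $g$ for which $g\Ep$ and $g\Em$ are definite of the right signs form the open set $\{(g\Ep,g\Em)\}\cong\cD_+\times\cD_-$ in $G/K$. This is the open subset of $G/K$, viewed as pairs of complementary subspaces $(\Gr_p\times\Gr_q)\setminus\{\text{non-transversal pairs}\}$. The identification with $\Om$ is the result quoted from \cite{BHH03,FHW}. Since the cycle space of $\gamma$ is the connected component containing the base point of $\{gK:g\kappa\cap\gamma\text{ compact nonempty}\}$, it contains $\Om$.

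\emph{Part (ii).} By Gindikin--Matsuki, the $K$-orbit $\kappa$ is of holomorphic type exactly when $\kappa$ is a complex submanifold of an open $G_0$-orbit, i.e.\ when the dual $\gamma$ is open and $\kappa$ is the base cycle and is $P$-invariant for a parabolic $P\supset K$. Here this is the Hermitian situation. The candidates are therefore the base cycles $\kappa(r,s)$ with $c=0$, namely $\Gr_r(\Ep)\times\Gr_s(\Em)$ where $r+s=k$.

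Among these I must find which are stable under $\mathfrak p^+$ or $\mathfrak p^-$, the nilradical $\Hom(\Em,\Ep)$ or $\Hom(\Ep,\Em)$. The subgroup $\exp\Hom(\Em,\Ep)$ moves $W_+\oplus W_-$ to a graph over $W_-$, so it stabilises the cycle iff $W_-=0$ for all points, i.e.\ $s=0$. Symmetrically, the other nilradical gives $r=0$. This yields exactly $\kappa(k,0)=\Gr_k(\Ep)$ and $\kappa(0,k)=\Gr_k(\Em)$.

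\emph{Part (iii).} For $M(p,0,c)$ with $k=p$ and $c\ge1$, the dual orbit is $\kappa(p-c,0)$. It is not closed, since its closure contains the strata with larger intersection with $\Em$. I expect the main obstacle to be the equality of its cycle space with $\Om$, rather than mere containment. For the upper bound I would use the Huckleberry--Ntatin identification \cite{HN03}, which gives that the cycle space of any non-closed $K$-orbit's dual is $\Om$ in the Hermitian case, exceptions occurring only for holomorphic-type $K$-orbits in the Gindikin--Matsuki sense. By (ii) no $\kappa(r,s)$ with $c\ge1$ is of that kind.

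For the final sentence, compare the two lists. Definition~\ref{def:holtype} requires $k=p$, $b=0$, while Gindikin--Matsuki requires the $K$-orbit to be $\Gr_k(\Ep)$, i.e.\ $a=k$ and $c=0$. Both hold only when $k=p$ and $M=\{\Ep\}$, and symmetrically when $M=\{\Em\}$.
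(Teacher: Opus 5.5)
Your argument is correct. For (i) and (iii) it follows the paper.

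In (i) you identify the intersection by linear algebra. You use inertia counts: a positive definite subspace of $W$ meets the $h$-seminegative space $N\oplus W_-$ trivially, and $W_+\oplus W_-$ is non-degenerate, so its $h$-orthogonal complement in $W$ is null. The paper does the same with explicit vector computations.

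In (iii) you import the equality $\mathcal C(\gamma)=\Om$ for the non-closed duals $R_c$ from the literature, as the paper does with Matsuki's proof of the Gindikin--Matsuki conjecture.

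The real difference is in (ii). The paper never tests $\mathfrak p^\pm$-stability. It computes $\mathcal C(O(k,0,0))=\{g:\ g\Ep\in\cD_+\}^0$, notes that this is strictly larger than $\Om$, and concludes from the contrapositive of Matsuki's theorem that $\Gr_k(\Ep)$ is of holomorphic type. It then relies on the fact that Gindikin--Matsuki single out exactly two such orbits. You instead apply the definition directly to all closed orbits $\kappa(r,s)$ with $r+s=k$. Your route is self-contained and rules out the mixed orbits $0<r<k$ explicitly. The paper's route instead exhibits the enlarged cycle space $\cD_+\cong\cD$, which is what makes these orbits exceptional.

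A few details need fixing.
\begin{itemize}
\item Your opening sentence in (ii) is not a characterisation, since every closed $K$-orbit is a complex submanifold of its dual open orbit. What you actually use, correctly, is stability under $KP^\pm$.
\item Your stability test is slightly off. $\exp\Hom(\Em,\Ep)$ preserves $\kappa(r,s)$ iff $s=0$ \emph{or} $r=p$, because for $W_+=\Ep$ one has $X(W_-)\subset W_+$. The second alternative only occurs for $k>p$, where $\Gr_k(\Ep)$ is empty and the stable closed orbit is $\kappa(p,k-p)$. The statement and the paper's proof (which assumes $k\le p$, respectively $k\le q$) also pass over this case, so you should state that restriction explicitly.
\item In (i), say why $\Iso_c\bigl((W_+\oplus W_-)^{\perp_h}\bigr)$ is non-empty: it follows from admissibility.
\item In (iii) the orbit is $M(p-c,0,c)$, not $M(p,0,c)$.
\item For the Hermitian case the references the paper relies on are Ntatin and Matsuki. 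Huckleberry--Ntatin treat non-Hermitian $G_0$.
\end{itemize}
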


Theorems~\ref{thmD} and~\ref{thmE} together give, for $\SU(p,q)$ on Grassmannians, a complete answer to the question of holomorphic functions and hulls of Matsuki orbits. Several consequences are worth stating.
\begin{itemize}
\item The sphere--ball picture does \emph{not} persist in general. For $\SU(2,6)$ on $\Gr_3(\C^8)$ the orbit $M(1,1,1)$ is a bundle of strictly pseudoconvex spheres $S^9$ over $\PP^1\times\PP^5$. One might guess that its hull is the corresponding ball bundle; in fact only constants are holomorphic near it. Its Levi form has signature $(5,1)$ with a four-dimensional kernel (Section~\ref{sec:examples}).
\item In the holomorphic-type case the hull contains points of rank $<c$, in particular the base cycle $\{\Ep\}$ of $\cD$. So the hull is \emph{not} contained in $\kappa$; nor does it contain $\kappa$, which is unbounded in $\cA$.
\item The holomorphic-type orbits are exactly the $K_0$-orbits of tripotents of rank $c$ in the Cartan factor $\Hom(\Ep,\Em)$. For these, $\widehat M$ coincides with the polynomial hull computed by Kaup and Zaitsev \cite[Thm.~2.4]{KZ03}, except when $c=p=q$. Our proof is independent and representation-theoretic, and gives extension of germs of holomorphic functions on neighbourhoods in $Z$.
\item The exception $c=p=q$ is a genuine discrepancy. There $M=\U(p)$ is the Shilov boundary of the matrix ball, so its polynomial hull is $\overline\cD$. But $\det(\Phi)^{-1}$ is holomorphic near $\U(p)$ and has poles along $\{\det=0\}$, which passes through the centre of $\cD$. So the holomorphic hull in $Z$ collapses to $M$ itself (Remark~\ref{rem:det}).
\item Germs and continuous CR functions behave differently. A germ along $M$ extends to a \emph{neighbourhood} of $\widehat M$ (Theorem~\ref{thmD}), whereas a continuous CR function extends only to $\widehat M$ itself, holomorphically in its interior, and in general to no neighbourhood of any point of $M$ (Example~\ref{ex:sqrt}).
\item For $c<\min(p-a,q-b)$ the constancy of CR functions also has a direct geometric reason: $M$ is swept out by chains of rational curves along its fibres (Proposition~\ref{prop:curves}).
\item Two dichotomies should not be confused.
\begin{itemize}
\item The function-theoretic dichotomy of Theorem~\ref{thmD} concerns $M$ itself.
\item The Gindikin--Matsuki dichotomy concerns the moving cycles $g\kappa\cap\gamma$.
\end{itemize}
They single out different orbits, and meet only at the base points $\{\Ep\}$ (when $k=p$) and $\{\Em\}$ (when $k=q$), by Theorem~\ref{thmH}. In particular, the holomorphic-type orbits of Theorem~\ref{thmD}(ii) with $c\ge1$ have the universal cycle space $\Om$, even though they carry many holomorphic functions.
\end{itemize}

\subsection*{Organisation}
Section~\ref{sec:lit} reviews the literature and isolates the gap. Section~\ref{sec:prelim} fixes notation. Section~\ref{sec:orbits} classifies the orbits and proves Theorem~\ref{thmA}. Section~\ref{sec:alg} proves Theorems~\ref{thmB} and~\ref{thmC}. Section~\ref{sec:main} proves Theorem~\ref{thmD} and Section~\ref{sec:regular} proves Theorem~\ref{thmE}. Section~\ref{sec:CR} proves Theorem~\ref{thmF}, including the description of continuous CR functions and the geometric proof by rational curves. Sections~\ref{sec:bundles} and~\ref{sec:cycles} prove Theorems~\ref{thmG} and~\ref{thmH}. Section~\ref{sec:levi} computes Levi forms in a family of examples, and Section~\ref{sec:examples} works out the examples.

%=====================================================================
\section{Literature review and the gap}\label{sec:lit}
%=====================================================================

\subsection{Real form orbits and Matsuki duality}
The systematic study of $G_0$-orbits on complex flag manifolds begins with Wolf \cite{Wolf69}. He proved finiteness, the existence of a unique closed orbit and of open orbits, and decomposed each orbit into holomorphic arc components; see Akhiezer's survey \cite{Akhiezer11} for a modern account. Matsuki \cite{Matsuki79,Matsuki88} established the duality between $G_0$- and $K$-orbits, and showed that it reverses closure relations and that dual orbits meet in a single $K_0$-orbit. Mirković, Uzawa and Vilonen \cite{MUV92} gave a Morse-theoretic proof for the full flag manifold, upgraded to an equivalence of equivariant sheaf categories, and Bremigan and Lorch \cite{BL02} extended the geometric proof to arbitrary parabolics. A complete exposition, including duality via momentum geometry, is \cite[Part~II]{FHW}.

\subsection{Cycle spaces}
For a flag domain $D$, the cycle space $\mathcal M_D$ was introduced in connection with automorphic cohomology \cite{WellsWolf77} and shown to be Stein \cite{Wolf92}. Its identification with the Akhiezer--Gindikin domain $\Om$ \cite{AG90}, outside explicit Hermitian cases, was achieved through Schubert slices, Iwasawa domains and Kobayashi hyperbolicity \cite{Huck02,HW03,FH05,WolfZierau00}; see \cite{FHW} for the complete theory and \cite{HW10} for exhaustions. For a lower-dimensional orbit $\gamma$, Gindikin and Matsuki \cite{GM03} and Huckleberry and Ntatin \cite{HN03} introduced the cycle space $\mathcal C(\gamma)$ of $g\in G$ with $g\kappa\cap\gamma$ non-empty and compact. Huckleberry and Ntatin proved that $\mathcal C(\gamma)=\Om$ for non-Hermitian $G_0$. Ntatin \cite{Ntatin} treated the Hermitian case, where a few exceptional orbits have the bounded symmetric domain as cycle space, and Matsuki \cite{Matsuki06} gave the $K$-orbit side. These works describe which $g$ keep the cycle compact, but not the analytic structure of $M$ itself.

\subsection{CR geometry of real form orbits and of Matsuki orbits}
Every $G_0$-orbit inherits a homogeneous CR structure. Altomani, Medori and Nacinovich developed the theory of these \emph{parabolic CR manifolds} via CR algebras. They characterised finite type and Levi non-degeneracy, constructed canonical and Mostow fibrations, computed fundamental groups \cite{AMN06,AMN10}, and studied global CR functions on minimal orbits \cite{AMN06,Altomani07}. They observed that a Matsuki orbit $\kappa\cap\gamma$ is a compact homogeneous CR manifold with the same CR codimension as $\gamma$ and smaller CR dimension \cite{AMN10}. In \cite{AMN13} they showed it is $\mathfrak n$-reductive, so that it fibres canonically over a complex flag manifold. Marini and Nacinovich \cite{MN17,MN18} related the tangential Cauchy--Riemann cohomology of such manifolds to the Dolbeault cohomology of their canonical complex embedding (the $K$-orbit $\kappa$) under pseudoconcavity hypotheses. Higher-order Levi forms and finite non-degeneracy of closed orbits were studied in \cite{MMN21,MMN23}.

\subsection{Bounded symmetric domains and compact orbits}
When $G_0$ is of Hermitian type and $Z$ is the compact dual of $\cD=G_0/K_0$, the boundary of $\cD$ in its Harish-Chandra realisation decomposes into finitely many $G_0$-orbits; see \cite[Ch.~3]{FHW} and \cite{Wolf69}. Kaup and Zaitsev studied the CR structure of these and related compact orbits in Jordan-theoretic terms. In \cite{KZ00} they introduced symmetric CR manifolds. These include the Shilov boundaries of bounded symmetric domains, for which they computed the polynomial and rational convex hulls and proved continuous extension of CR functions. In \cite{KZ03} they computed Levi forms and Levi cones of all orbits $K_0\cdot a$ in the ambient space of $\cD$, solved the CR-equivalence problem, and determined the polynomial hull of $K_0\cdot a$ as $\{z:\mu_j(z)\le\mu_j(a)\ \forall j\}$, where $\mu_j$ is the product of the $j$ largest singular values. They also proved that continuous CR functions on $K_0\cdot a$ extend to this hull. Local CR transformations of Levi-degenerate group orbits in compact Hermitian symmetric spaces are treated in \cite{KZ06}. Polynomial hulls of invariant compact sets in bounded symmetric domains were also studied by Sacr\'e \cite{Sacre92}, Faraut and Bouattour \cite{FB04} and Kaup \cite{Kaup04}. More generally, Gichev and Latypov \cite{GL01} showed that an orbit $K_0v$ of a compact group in a $K$-module is polynomially convex if and only if $Kv$ is closed and $K_0v$ is a real form of $Kv$.

\subsection{Holomorphic functions on invariant sets}
For $K_0$-invariant domains in complex $K$-spaces, holomorphic functions expand into $K_0$-isotypic components, which are often algebraic. Lassalle \cite{Lassalle78} developed the resulting Laurent theory in complexified compact symmetric spaces, and Heinzner \cite{Heinzner91} developed the theory of complexifications of $K_0$-actions on Stein spaces. The algebraic counterpart is the notion of an epimorphic subgroup $H\subset K$, characterised by $\C[K/H]=\C$ \cite{BienBorel92}.

\subsection{The gap}
To summarise:
\begin{itemize}
\item the orbit structure, the duality and the cycle spaces are well understood;
\item the CR structure of Matsuki orbits is known abstractly \cite{AMN10,AMN13};
\item the polynomial hulls of compact orbits are known in the special setting of the ambient vector space of a bounded symmetric domain \cite{KZ03,Kaup04}.
\end{itemize}
What is missing is a description of holomorphic functions and hulls of Matsuki orbits of lower-dimensional $G_0$-orbits in a general flag manifold. Concretely:
\begin{enumerate}[label=(G\arabic*)]
\item Which Matsuki orbits carry non-constant holomorphic functions on their neighbourhoods in $Z$, and what is the general mechanism?
\item When they do, what is the hull?
\item How do these orbits relate to the bounded symmetric domain and the Kaup--Zaitsev theory?
\item What are the global CR functions on $M$, and in degree zero, how does the CR cohomology of $M$ compare with the Dolbeault cohomology of $\kappa$ without pseudoconcavity hypotheses?
\item What are the holomorphic sections of line bundles near $M$, and how do the function-theoretic exceptions compare with the exceptions in cycle-space theory?
\end{enumerate}
Theorems~\ref{thmB} and~\ref{thmC} give the general mechanism in (G1) for arbitrary real forms. Theorems~\ref{thmD}--\ref{thmH} answer (G1)--(G5) completely for $\SU(p,q)$ on all Grassmannians.

%=====================================================================
\section{Preliminaries}\label{sec:prelim}
%=====================================================================

\subsection{Groups and the Grassmannian}
Let $n=p+q$, $\C^n=\Ep\oplus\Em$ with $\Ep=\la e_1,\dots,e_p\ra$, $\Em=\la e_{p+1},\dots,e_n\ra$, and
\[
h(z,w)=\sum_{i\le p}z_i\bar w_i-\sum_{j>p}z_j\bar w_j .
\]
Then $\pm h$ is the standard inner product on $E_\pm$ and $\Ep\perp_h\Em$. We write $\la\cdot,\cdot\ra$ and $\|\cdot\|$ for the standard inner product and norm, and $\|\Phi\|$ for the operator norm.

Let $G=\SL_n(\C)$, $G_0=\SU(p,q)$ and $Z=\Gr_k(\C^n)$, so that $T_WZ=\Hom(W,\C^n/W)$ and $\dim_\C Z=k(n-k)$. The maximal compact subgroup of $G_0$ is $S(\U(p)\times\U(q))$. Since scalar matrices act trivially on $Z$, it is convenient to use
\[
K_0=\U(\Ep)\times\U(\Em),\qquad K=\GL(\Ep)\times\GL(\Em),
\]
acting on $Z$ through $\GL_n(\C)$. These groups have the same image in $\mathrm{PGL}_n(\C)$ as $S(\U(p)\times\U(q))$ and $S(\GL_p\times\GL_q)$, hence the same orbits and the same notion of $K_0$-finite function. The advantage is that all stabilisers $K_W$ are connected (Lemma~\ref{lem:connected}). This fails for $S(\GL_p\times\GL_q)$. For example, if $p=q=k$ and $W$ is the graph of an isomorphism $A$, then the stabiliser is $\{(g,AgA^{-1}):\det(g)^2=1\}$, which has two components.

\subsection{Signatures and isotropic Grassmannians}
For $W\in Z$, the radical is $N_W=W\cap W^{\perp_h}$. The signature $\sig W=(a,b,c)$ records the maximal dimensions $a$, $b$ of $h$-positive and $h$-negative subspaces of $W$ and $c=\dim N_W$, so $a+b+c=k$. For a Hermitian space $V$ of signature $(m,m')$ let $\Iso_c(V)\subset\Gr_c(V)$ denote the set of totally isotropic $c$-planes.

\begin{lemma}\label{lem:Iso}
Let $V=V_+\oplus V_-$ be an $h$-orthogonal splitting into definite subspaces of dimensions $m,m'$. A $c$-plane $N\subset V$ is totally isotropic if and only if it is the graph of an isometric embedding $(U,h)\to(V_-,-h)$ with $U\in\Gr_c(V_+)$. Hence $\Iso_c(V)$ is compact, homogeneous under $\U(V_+)\times\U(V_-)$, and $\dim_\R\Iso_c(V)=2c(m+m'-c)-c^2$.
\end{lemma}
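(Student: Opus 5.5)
We have to show that a $c$-plane $N\subset V$ is totally isotropic exactly when it is the graph of an isometric embedding, and then read off compactness, homogeneity and the dimension.

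\emph{Graph characterisation.} Let $\pi_\pm:V\to V_\pm$ be the projections. For $v=v_++v_-$ we have
\[
h(v,v)=\|v_+\|^2-\|v_-\|^2,
\]
because $V_+\perp_h V_-$ and $\pm h$ is positive definite on $V_\pm$.

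Suppose $N$ is totally isotropic. If $v\in N\cap V_-$, then $0=h(v,v)=-\|v\|^2$, so $v=0$. Hence $\pi_+|_N$ is injective, and $U:=\pi_+(N)\in\Gr_c(V_+)$. This makes $N$ the graph of the linear map $A=\pi_-\circ(\pi_+|_N)^{-1}:U\to V_-$. Isotropy of each $u+Au$ gives $\|Au\|=\|u\|$ for all $u\in U$. By polarisation, $A$ is an isometry $(U,h)\to(V_-,-h)$.

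Conversely, take $A$ isometric and $u,u'\in U$. Then
\[
h(u+Au,\,u'+Au')=h(u,u')+h(Au,Au')=h(u,u')-h(u,u')=0 .
\]
So the graph is totally isotropic.

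\emph{Compactness and homogeneity.} Total isotropy is a closed condition, so $\Iso_c(V)$ is closed in the compact manifold $\Gr_c(V)$, hence compact.

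The group $\U(V_+)\times\U(V_-)$ preserves $h$ and so acts on $\Iso_c(V)$. The graph of $A:U\to V_-$ is sent by $(g_+,g_-)$ to the graph of $g_-Ag_+^{-1}:g_+U\to V_-$. Given two points with data $(U,A)$ and $(U',A')$, we proceed in two steps.
\begin{itemize}
\item[(1)] Pick $g_+\in\U(V_+)$ with $g_+U=U'$.
\item[(2)] The maps $A'g_+$ and $A$ are isometries from $U$ onto $c$-dimensional subspaces of $V_-$ (so $c\le m'$ is needed for nonemptiness). Extend the isometry $A(U)\to A'(U')$, $Au\mapsto A'g_+u$, to some $g_-\in\U(V_-)$; this is possible by Witt's theorem for definite forms, i.e.\ by extending orthonormal bases.
\end{itemize}
Then $g_-A=A'g_+$ on $U$, which gives transitivity.

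\emph{Dimension.} We fibre $\Iso_c(V)\to\Gr_c(V_+)$ by $N\mapsto\pi_+(N)$. The base has real dimension $2c(m-c)$. The fibre over $U$ is the space of isometric embeddings $\C^c\to\C^{m'}$, i.e.\ the Stiefel manifold $\U(m')/\U(m'-c)$, of real dimension
\[
m'^2-(m'-c)^2=2cm'-c^2 .
\]
Adding the two,
\[
2c(m-c)+2cm'-c^2=2c(m+m'-c)-c^2 .
\]

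The fibration is a homogeneous bundle, being equivariant for the transitive group action, so the dimension count is legitimate. The only step that needs any care is the extension in (2) for homogeneity, and that is elementary.
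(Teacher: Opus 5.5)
Your proposal is correct and follows essentially the same route as the paper: the graph description comes from $N\cap V_-=0$ plus polarisation, homogeneity comes from $\U(V_+)$ acting transitively on $\Gr_c(V_+)$ and $\U(V_-)$ acting transitively on the Stiefel fibre of isometric embeddings, and the dimension is the base dimension plus $2cm'-c^2$. You also spell out the closedness argument for compactness and the extension of an isometry between $c$-dimensional subspaces of $V_-$ to an element of $\U(V_-)$, both of which the paper leaves implicit.
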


\begin{proof}
An isotropic $N$ meets $V_\pm$ trivially, so it is the graph of a map $A:U\to V_-$ with $U=\mathrm{pr}_+N$, and $0=h(u+Au)=\|u\|^2-\|Au\|^2$. Polarisation shows $A$ is isometric; the converse is the same computation. The group $\U(V_+)$ is transitive on $\Gr_c(V_+)$, and $\U(V_-)$ is transitive on isometric embeddings of a fixed $U$, a Stiefel manifold of real dimension $2cm'-c^2$.
\end{proof}

\subsection{Holomorphic hulls}
For a compact set $S$ in a complex manifold $Y$, $\cO(S)$ denotes the algebra of germs of holomorphic functions along $S$. Since $Z$ is compact, envelopes of holomorphy of neighbourhoods of $M$ are not the right objects (they need not exist as Riemann domains over $Z$, and constants extend everywhere). We use instead the following notion.

\begin{definition}\label{def:hull}
Let $S\subset Y$ be compact. A compact set $\widehat S\supset S$ is a \emph{holomorphic hull} of $S$ in $Y$ if
\begin{enumerate}[label=(H\arabic*)]
\item for every open $U\supset S$ and every $f\in\cO(U)$ there are an open set $\Omega\supset\widehat S$ and $F\in\cO(\Omega)$ such that $F=f$ on some open neighbourhood of $S$ contained in $U\cap\Omega$; and
\item for every $y\in Y\setminus\widehat S$ there are an open $U\supset S$ and $f\in\cO(U)$ with the following property: there is no connected open set $\Omega\supset\widehat S\cup\{y\}$ carrying some $F\in\cO(\Omega)$ with $F=f$ on a neighbourhood of $S$.
\end{enumerate}
\end{definition}

Both conditions concern germs along $S$: (H1) says that every germ extends to a neighbourhood of $\widehat S$, and (H2) says that no point outside $\widehat S$ can be added for all germs at once. We do not claim that a holomorphic hull is unique in general; in every case computed below, the set we exhibit is moreover the polynomial hull of $S$ in the chart $\cA$, or $S$ itself.

\begin{lemma}\label{lem:trivialhull}
Let $Y$ be compact and connected, and suppose $\cO(S)=\C$. Then $\widehat S=Y$ is the only holomorphic hull of $S$.
\end{lemma}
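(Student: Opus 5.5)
We need to check (H1) and (H2) for $\widehat S=Y$, and then show that no other compact set satisfies them.

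First, $Y$ satisfies (H1). Take an open $U\supset S$ and $f\in\cO(U)$. The hypothesis $\cO(S)=\C$ means the germ of $f$ along $S$ is a constant $\lambda$. So $f=\lambda$ on some open neighbourhood $V\subset U$ of $S$. Put $\Omega=Y$ and $F\equiv\lambda$; then $F=f$ on $V\subset U\cap\Omega$.

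Next, $Y$ satisfies (H2) vacuously, because $Y\setminus Y=\emptyset$. Hence $Y$ is a holomorphic hull of $S$.

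For uniqueness, let $\widehat S\supset S$ be any holomorphic hull. We show that every $y\in Y$ lies in $\widehat S$. Suppose instead that $y\in Y\setminus\widehat S$. Condition (H2) then gives an open $U\supset S$ and $f\in\cO(U)$ such that no connected open $\Omega\supset\widehat S\cup\{y\}$ carries an extension of $f$. By hypothesis $f=\lambda$ near $S$, for some constant $\lambda$. Take $\Omega=Y$, which is open, connected and contains $\widehat S\cup\{y\}$, and take $F\equiv\lambda\in\cO(Y)$. Then $F=f$ on a neighbourhood of $S$, which contradicts (H2). So $Y\setminus\widehat S=\emptyset$, that is, $\widehat S=Y$.

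There is no real obstacle in this argument. The only point to be careful about is the exact form of (H2): it forbids every connected open set containing $\widehat S\cup\{y\}$. Connectedness of $Y$ is used exactly so that $\Omega=Y$ is an admissible choice. Compactness of $Y$ is used only to make $Y$ itself a compact candidate hull.
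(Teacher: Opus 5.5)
Your proof is correct and follows the paper's argument essentially verbatim: every germ along $S$ is constant, so it extends to $Y$, giving (H1) with (H2) vacuous; and any point $y\notin\widehat S$ would violate (H2), because the constant extends to the connected open set $Y\supset\widehat S\cup\{y\}$. Your remarks on where connectedness and compactness of $Y$ enter are accurate.
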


\begin{proof}
Every germ along $S$ is constant and therefore extends to $Y$, so (H1) holds for $\widehat S=Y$, and (H2) is vacuous. If $\widehat S\ne Y$, pick $y\in Y\setminus\widehat S$. Every $f$ as in (H2) is constant near $S$, and the constant extends to the connected open set $Y\supset\widehat S\cup\{y\}$; so (H2) fails.
\end{proof}

In this case we say that $S$ has \emph{trivial hull}. A non-trivial hull, i.e.\ one different from $Y$, can therefore exist only if $\cO(S)\ne\C$.

\begin{remark}[Relation to envelopes of holomorphy]\label{rem:envelope}
Definition~\ref{def:hull} is \emph{not} the classical envelope of holomorphy, and we never use it as such. The classical envelope of an open set $U\subset Y$ is the maximal Riemann domain $\pi:E(U)\to Y$ over $Y$ to which all of $\cO(U)$ extends. Rossi's envelope \cite{Rossi63} is the spectrum of $\cO(U)$. Both are defined in terms of $\cO(U)$, and for the orbits of Theorem~\ref{thmD}(i) both degenerate:
\begin{itemize}
\item $\cO(U)=\C$ for every connected neighbourhood $U$ of $M$;
\item every function on $U$ therefore extends to all of $Z$, so the maximal Riemann domain of extension is $Z$ itself;
\item the spectrum of $\cO(U)=\C$ is a single point;
\item in neither sense is the envelope the $K$-orbit $\kappa$, nor any other proper subset of $Z$.
\end{itemize}
The compact hull of Definition~\ref{def:hull} records exactly the information that survives: by Lemma~\ref{lem:trivialhull} it is non-trivial only if $\cO(M)\ne\C$. In that case $\widehat M$ is the maximal compact set to which all germs extend, and it controls the classical envelope as follows. By Theorem~\ref{thm:hull}, for every connected neighbourhood $U$ of $M$ there is an open set $\Omega_U\supset\widehat M$ such that every $f\in\cO(U)$ agrees, on the component of $U\cap\Omega_U$ containing $M$, with a function holomorphic on $\Omega_U$. Hence the Riemann domain of holomorphy $E(U)\to Z$ contains an open subset mapped biholomorphically onto $\Omega_U$, glued to $U$ along that component; in this sense $E(U)$ contains a neighbourhood of $\widehat M$. By (H2), no point of $Z\setminus\widehat M$ is reached for all $U$ simultaneously. For the uniform algebra $CR^0(M)$ of continuous CR functions, the analogue of Rossi's envelope is its maximal ideal space; by Theorem~\ref{thmF}(v) it is again $\widehat M$.
\end{remark}

Condition (H2) will be verified by the following standard device. Suppose $\Omega$ is a connected neighbourhood of $\widehat S\cup\{y\}$, and $F$ is holomorphic on an open set $\Omega'\subset\Omega$ with $\Omega\setminus\Omega'$ a proper analytic subset, so that $\Omega'$ is connected and dense. Suppose $F$ is holomorphic near $\widehat S$ but unbounded near $y$, or $F$ is non-vanishing near $\widehat S$ while $F(y)=0$. Then $f=F$, respectively $f=1/F$, regarded as a germ along $S$, admits no holomorphic extension to $\Omega$. Indeed, an extension agreeing with $f$ near $S$ would agree with $f$ on the connected set $\Omega'$ (respectively $\Omega'\setminus\{F=0\}$, which is again connected), by the identity theorem; this contradicts the unboundedness of $F$ near $y$, respectively $F(y)=0$.

\subsection{Relation to fibrewise extension domains}\label{subsec:earlier}
A related work of the author and K.~Shabbir concerns the same class of orbits, and we explain how it relates to the present results.

\emph{The paper \cite{UllahShabbir}} treats the rank-one orbits $M(a,b,1)$ with $\min(p-a,q-b)=1$ and $s=\max(p-a,q-b)\ge2$. Each fibre of $M\to B=\Gr_a(\Ep)\times\Gr_b(\Em)$ is then a strictly pseudoconvex sphere $\Iso_1(V')\subset\PP(V')$ bounding a projective ball. By \cite[Thm.~1]{UllahShabbir}, every real-analytic CR function on $M$ extends holomorphically to the union $D_{a,b,1}$ of these balls. This union is a domain in a projective bundle $X_{a,b}\to B$, not in $Z$, and it is maximal among domains obtained fibre by fibre. That paper leaves open whether the extensions descend to the image of $D_{a,b,1}$ under the evaluation map $\mathrm{ev}:X_{a,b}\to Z$. The present results settle this question.
\begin{itemize}
\item If $M$ is not of holomorphic type, for instance $M(1,1,1)\subset\Gr_3(\C^8)$, every continuous CR function on $M$ is constant (Theorem~\ref{thmF}(iv)). So the extensions of \cite[Thm.~1]{UllahShabbir} are constant and descend trivially, and $M$ has trivial hull $Z$ (Lemma~\ref{lem:trivialhull}) (Example~\ref{ex:ballbundle}).
\item The rank-one orbits of holomorphic type are $M(p-1,0,1)$ with $k=p$ and $M(0,q-1,1)$ with $k=q$. Consider the first; the second is symmetric. A point of the closed ball over $W_+\in\Gr_{p-1}(\Ep)$ is a line $[u+v]$ with $u$ spanning $\Ep\ominus W_+$, $v\in\Em$ and $\|v\|\le\|u\|$. The evaluation map sends it to the graph of the map $\Phi$ with $\Phi|_{W_+}=0$ and $\Phi u=v$, a contraction of rank $\le1$. So $\mathrm{ev}$ maps $\overline{D_{p-1,0,1}}$ onto the hull $\widehat M=\overline\cD\cap V_1$, collapsing the centre section $\{[u]\}\cong\PP^{p-1}$ to the base point $\Ep$. Now let $f$ be a CR function on $M$. By Theorem~\ref{thmF}(v), $f$ is the restriction of some $\tilde f\in\cP(\widehat M)$. Then $\tilde f\circ\mathrm{ev}$ is a uniform limit of the holomorphic functions $P_n\circ\mathrm{ev}$, so it is holomorphic on $D_{p-1,0,1}$, continuous up to the boundary, and has boundary values $f\circ\mathrm{ev}$. By the fibrewise uniqueness in \cite[Thm.~1]{UllahShabbir}, it is the extension constructed there. Hence the fibrewise extensions descend to $Z$, and their images fill out exactly the hull $\widehat M$.
\end{itemize}

%=====================================================================
\section{Orbits and the structure of Matsuki orbits}\label{sec:orbits}
%=====================================================================

\subsection{\texorpdfstring{$G_0$}{G0}-orbits}

\begin{proposition}\label{prop:G0}
Two $k$-planes lie in the same $G_0$-orbit if and only if they have the same signature. The orbits are
\[
O(a,b,c)=\{W:\sig W=(a,b,c)\},\qquad a+b+c=k,\quad a+c\le p,\quad b+c\le q .
\]
The map $W\mapsto N_W$ is a $G_0$-equivariant fibre bundle $O(a,b,c)\to\Iso_c(\C^n)$ whose fibre is an open orbit of $\U(p-c,q-c)$ in $\Gr_{k-c}(\C^{n-2c})$. Consequently $\codim_\R O(a,b,c)=c^2$: the orbit is open iff $c=0$ and closed iff $c$ is maximal. Every $G_0$-orbit is a generic CR submanifold of $Z$.
\end{proposition}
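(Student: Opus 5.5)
The plan is to use Witt's theorem for $h$, then build the radical fibration explicitly and read off the codimension from the fibre and base dimensions.

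\emph{Orbits are classified by signature.} Since $h$ is $G_0$-invariant, $\sig W$ is constant on orbits. Conversely, suppose $\sig W=\sig W'$. Then $(W,h|_W)$ and $(W',h|_{W'})$ are isometric Hermitian spaces, because a possibly degenerate Hermitian form is determined up to isometry by $(a,b,c)$. By Witt's extension theorem for non-degenerate Hermitian forms, this isometry extends to some $g\in\U(p,q)$ with $gW=W'$. Multiplying $g$ by a scalar of modulus one changes neither $gW$ nor unitarity, so we may arrange $\det g=1$, i.e.\ $g\in G_0$. The constraints $a+c\le p$ and $b+c\le q$ hold because $W$ contains an $h$-nonpositive subspace of dimension $b+c$, which meets $\Ep$ trivially; symmetrically for $a+c$. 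Every admissible triple occurs: take $W_+\subset\Ep$ of dimension $a$, $W_-\subset\Em$ of dimension $b$, and $N\in\Iso_c$ of their orthogonal complement, which is nonempty by Lemma~\ref{lem:Iso}.

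\emph{The radical fibration.} The map $W\mapsto N_W$ is $G_0$-equivariant, and its image is a single orbit $\Iso_c(\C^n)$, which is homogeneous by Witt. The fibre over a fixed $N$ consists of those $W$ with $N\subset W\subset N^{\perp_h}$ whose image $W/N$ in the non-degenerate space $N^{\perp_h}/N$, of signature $(p-c,q-c)$, is non-degenerate of signature $(a,b)$. This is the open orbit of $\U(p-c,q-c)$ in $\Gr_{k-c}(\C^{n-2c})$ given by the condition "non-degenerate of signature $(a,b)$". Local triviality follows from homogeneity: the orbit is $G_0\times_{(G_0)_N}(\text{fibre})$.

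\emph{Codimension.} I would compute the real dimension of the orbit as the dimension of the base plus that of the fibre. The base $\Iso_c(\C^n)$ has real dimension $2c(n-c)-c^2$ by Lemma~\ref{lem:Iso}. The fibre is open in $\Gr_{k-c}(\C^{n-2c})$, so its real dimension is $2(k-c)(n-k-c)$. Adding these,
\[
2c(n-c)-c^2+2(k-c)(n-k-c)=2k(n-k)-c^2 .
\]
Expanding, the $c$-terms give $2cn-2c^2-c^2+2kn-2k^2-2kc-2cn+2kc+2c^2=2k(n-k)-c^2$. Hence the real codimension is $c^2$. Consequently the orbit is open exactly when $c=0$. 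Since $\overline{O(a,b,c)}$ consists of orbits with larger $c$, obtained as limits by degenerating positive or negative directions to null ones, the orbit is closed exactly when $c$ is maximal. The step needing some care here is the closure relation for the "closed iff" claim; alternatively one can cite uniqueness of the closed orbit of minimal dimension \cite{Wolf69}.

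\emph{Genericity.} This is the main obstacle. I would show that $T_WO+iT_WO=T_WZ$. Write $T_WZ=\Hom(W,\C^n/W)$. The tangent space $T_WO$ is the image of $\mathfrak{su}(p,q)$, namely $X\mapsto X|_W \bmod W$. The claim then reduces to showing that $\mathfrak g_0+i\mathfrak g_0=\mathfrak{sl}_n$ surjects onto $\Hom(W,\C^n/W)$, which is immediate since $G$ acts transitively on $Z$. The subtlety is that generic means the complex span of the real tangent space is everything, and this is exactly the image of $\mathfrak g=\mathfrak g_0\oplus i\mathfrak g_0$. So genericity follows from the transitivity of $G$, and it is automatic for all real-form orbits.
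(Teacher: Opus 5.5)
Your proposal is correct and follows essentially the same route as the paper. It uses Witt's theorem plus rescaling by a unimodular scalar for the classification, and a semidefinite subspace meeting $\Em$ or $\Ep$ trivially for the inequalities. It builds the radical fibration over $\Iso_c(\C^n)$ with the same dimension count $2c(n-c)-c^2+2(k-c)(n-k-c)=2k(n-k)-c^2$, and gets genericity from $\mathfrak g_0+i\mathfrak g_0=\mathfrak g$.

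The differences are cosmetic. You realise each admissible triple via Lemma~\ref{lem:Iso} rather than an explicit basis. You also sketch the ``closed iff $c$ maximal'' claim via semicontinuity of the signature, or via Wolf's uniqueness of the closed orbit, whereas the paper simply states it as a consequence.
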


\begin{proof}
If $\sig W=\sig W'$, Sylvester's law of inertia and Witt's extension theorem give an element of $\U(p,q)$ mapping $W$ to $W'$; rescaling by a scalar puts it in $\SU(p,q)$ without changing its action on $Z$.

For the inequalities: $W$ contains the $(a+c)$-dimensional $h$-semipositive subspace $W_+\oplus N_W$, which meets $\Em$ trivially, so $a+c\le p$; similarly $b+c\le q$. Conversely,
\[
\la e_1,\dots,e_a\ra\oplus\la e_{p+1},\dots,e_{p+b}\ra\oplus\la e_{a+i}+e_{p+b+i}:1\le i\le c\ra
\]
has signature $(a,b,c)$.

For the fibration: $W\mapsto W/N_W$ identifies $\{W:N_W=N\}$ with the non-degenerate $(k-c)$-planes of signature $(a,b)$ in $N^{\perp_h}/N$, a space of signature $(p-c,q-c)$, on which the stabiliser of $N$ acts transitively. By Lemma~\ref{lem:Iso}, $\dim_\R\Iso_c(\C^n)=2c(n-c)-c^2$. Adding $2(k-c)(n-k-c)$ gives $2k(n-k)-c^2$.

Genericity: at $W$ the tangent space of the orbit is the image of $\mathfrak g_0$ in $T_WZ=\mathfrak g/\mathfrak p_W$, and $\mathfrak g_0+i\mathfrak g_0=\mathfrak g$. So $T\gamma+J\,T\gamma=TZ$.
\end{proof}

\subsection{\texorpdfstring{$K$}{K}-orbits and their closures}

\begin{proposition}\label{prop:K}
\begin{enumerate}[label=\textup{(\roman*)}]
\item The $K$-orbits are the sets $\kappa(r,s)=\{W:\dim W\cap\Ep=r,\ \dim W\cap\Em=s\}$, where $d=k-r-s$ satisfies $d\le p-r$ and $d\le q-s$.
\item Put $V_1=\Ep/W_+$ and $V_2=\Em/W_-$, of dimensions $m_1=p-r$ and $m_2=q-s$. The map
\[
\pi:\kappa(r,s)\to B:=\Gr_r(\Ep)\times\Gr_s(\Em),\qquad W\mapsto(W\cap\Ep,W\cap\Em),
\]
is a $K$-equivariant holomorphic fibre bundle. Its fibre over $(W_+,W_-)$ is
\[
F=\{S\in\Gr_d(V_1\oplus V_2):S\cap V_1=0=S\cap V_2\},
\]
the set of graphs of injective maps $A:U\to V_2$ with $U\in\Gr_d(V_1)$.
\item $\dim_\C\kappa(r,s)=r(p-r)+s(q-s)+d(p-r-d)+dm_2$.
\end{enumerate}
In particular $(a,b,c)\mapsto(r,s)=(a,b)$ is a bijection between the index sets of $G_0$-orbits and of $K$-orbits.
\end{proposition}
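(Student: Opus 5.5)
The proposition has two halves. First, the pair of invariants $(\dim W\cap\Ep,\dim W\cap\Em)$ is a complete set of invariants for $K=\GL(\Ep)\times\GL(\Em)$ acting on $\Gr_k(\C^n)$. Second, the resulting map $\pi$ is a homogeneous bundle. Both follow from a normal form for $W$ under $K$.

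\emph{Normal form and invariants.} Let $W\in Z$ and put $W_+=W\cap\Ep$ and $W_-=W\cap\Em$, of dimensions $r$ and $s$. Projection then gives an injection of $\bar W=W/(W_+\oplus W_-)$, of dimension $d=k-r-s$, into $V_1\oplus V_2$, where $V_1=\Ep/W_+$ and $V_2=\Em/W_-$. Call the image $S$. We claim $S\cap V_1=0$. Indeed, if $x\in W$ has image in $V_1$, then $x=e_++w_-+e'_+$ with $e'_+\in W_+$ and $w_-\in W_-$; since $W_-\subset W$, this gives $e_+\in W\cap\Ep=W_+$, so the class of $x$ vanishes. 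Symmetrically $S\cap V_2=0$. Hence $S$ is the graph of an injective linear map $A:U\to V_2$ with $U=\mathrm{pr}_1S\in\Gr_d(V_1)$. This forces $d\le m_1=p-r$ and $d\le m_2=q-s$, which are the stated inequalities.

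Conversely, $W$ is recovered as the preimage of $S$ under $\C^n\to V_1\oplus V_2$, so the correspondence $W\leftrightarrow(W_+,W_-,S)$ is bijective onto the triples with $S$ in $F$. For transitivity, $\GL(\Ep)\times\GL(\Em)$ moves $(W_+,W_-)$ to the standard pair. The stabiliser of that pair surjects onto $\GL(V_1)\times\GL(V_2)$, and this group acts transitively on graphs of injective maps $U\to V_2$: choose bases so that $U=\la e_1,\dots,e_d\ra$ and $A e_i=f_i$. This gives (i), with normal form
\[
W=\la e_1,\dots,e_r\ra\oplus\la e_{p+1},\dots,e_{p+s}\ra\oplus\la e_{r+i}+e_{p+s+i}:1\le i\le d\ra .
\]

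\emph{Bundle structure and dimension.} $\pi$ is $K$-equivariant and surjective onto the $K$-homogeneous base $B$. Its fibre is identified with $F$ via $W\mapsto S$, and $F$ is an orbit of the parabolic stabiliser of the base point. So $\kappa(r,s)=K\times_{K_B}F$ is a holomorphic homogeneous fibre bundle. This is standard; local triviality comes from local holomorphic sections of $K\to B$.

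For (iii), note that $F$ is open in $\Gr_d(V_1\oplus V_2)$ restricted over $U$. Concretely it fibres over $\Gr_d(V_1)$, of dimension $d(m_1-d)$, with fibre the open set of injective maps $U\to V_2$, of dimension $dm_2$. The base $B$ has dimension $r(p-r)+s(q-s)$, and the three terms add to the stated formula. The last assertion is immediate, since the constraints $a+c\le p$, $b+c\le q$ coincide with $d\le p-r$, $d\le q-s$ under $(a,b,c)=(r,s,d)$.

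There is no real obstacle. The only point needing care is the claim $S\cap V_i=0$, which uses that $W_\pm$ are exactly the full intersections. Completeness of the invariants then needs the explicit basis choice above.
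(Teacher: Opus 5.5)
Your proof is correct and follows the paper's argument. Both embed $W/(W_+\oplus W_-)$ into $V_1\oplus V_2$ as a $d$-plane meeting neither summand, use transitivity of $K$ on pairs $(W_+,W_-)$ and of $\GL(V_1)\times\GL(V_2)$ on $F$, and count dimensions as $\dim B+\dim\Gr_d(V_1)+dm_2$. You additionally write out details the paper leaves implicit: the check that $S\cap V_i=0$, the recovery of $W$ as the preimage of $S$, and an explicit normal form.
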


\begin{proof}
$W/(W_+\oplus W_-)$ embeds into $V_1\oplus V_2$ and meets neither summand, by maximality of $W_\pm$. The group $K$ is transitive on pairs $(W_+,W_-)$, and the stabiliser acts on $V_1\oplus V_2$ through $\GL(V_1)\times\GL(V_2)$, which is transitive on $F$. The constraints say that $U\subset V_1$ and that $A$ is injective. The dimension is $\dim B+\dim\Gr_d(V_1)+d\,m_2$.
\end{proof}

\begin{lemma}\label{lem:connected}
Every stabiliser $K_W$ is connected.
\end{lemma}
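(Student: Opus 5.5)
We will exhibit $K_W$ as an iterated extension of connected groups, the building blocks being parabolic subgroups of general linear groups and groups of the form $\GL_d$ or vector groups. Fix $W\in\kappa(r,s)$, put $W_+=W\cap\Ep$, $W_-=W\cap\Em$, and let $V_1,V_2,U,A$ be as in Proposition~\ref{prop:K}(ii).

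\textbf{Step 1: reduce to a stabiliser in $\GL(V_1)\times\GL(V_2)$.} Every $g=(g_+,g_-)\in K_W$ preserves $W\cap\Ep$ and $W\cap\Em$, so $K_W$ lies in the stabiliser $P=P_+\times P_-$ of $(W_+,W_-)$, where $P_\pm$ is a maximal parabolic subgroup of $\GL(E_\pm)$. Each $P_\pm$ is connected: it is a semidirect product of $\GL(W_\pm)\times\GL(E_\pm/W_\pm)$ with a vector group $\Hom(E_\pm/W_\pm,W_\pm)$. The condition $gW=W$ only depends on the image of $g$ in $\GL(V_1)\times\GL(V_2)$, because the unipotent radical and the factor $\GL(W_+)\times\GL(W_-)$ act trivially on $W/(W_+\oplus W_-)\subset V_1\oplus V_2$, and $W$ is the full preimage of this subspace. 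Hence $K_W$ is the preimage, under the surjection $P\to\GL(V_1)\times\GL(V_2)$ with connected kernel, of the stabiliser $L$ of the graph $S=\{u+Au:u\in U\}$. It therefore suffices to show that $L$ is connected. We will use the standard fact that if $1\to N\to H\to Q\to1$ with $N,Q$ connected, then $H$ is connected.

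\textbf{Step 2: compute $L$.} Choose complements $V_1=U\oplus U'$ and $V_2=A(U)\oplus C$, so that $\dim A(U)=d$ because $A$ is injective. A pair $(g_1,g_2)$ fixes $S$ if and only if $g_1U=U$ and $g_2A=A\,g_1|_U$ on $U$. So $g_1$ ranges over the connected parabolic $\mathrm{Stab}_{\GL(V_1)}(U)$. Given $g_1$, $g_2$ must preserve $A(U)$ and act on it by the fixed element $A g_1|_U A^{-1}\in\GL(A(U))$. The set of such $g_2$ is a coset of the connected group of block matrices
\[
g_2=\begin{pmatrix}1&*\\0&*\end{pmatrix}
\]
with respect to $A(U)\oplus C$, which is $\Hom(C,A(U))\rtimes\GL(C)$. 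The projection $L\to\mathrm{Stab}(U)$ is surjective (take the block-diagonal $g_2$) and has this connected kernel. Both base and kernel are connected, so $L$ is connected, and by Step 1 so is $K_W$.

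\textbf{Main point of care.} The step needing care is Step 1: we must make sure that connectedness really passes through the extension. The kernel of $P\to\GL(V_1)\times\GL(V_2)$ is connected, being $\GL(W_+)\times\GL(W_-)$ times a vector group, and $K_W$ is exactly its full preimage. The argument also shows why the determinant condition in $S(\GL_p\times\GL_q)$ breaks connectedness. That condition would impose $\det g_1\det g_2=1$. In the case $C=0$ and $U=V_1$, this couples $\det g_1$ to $\det g_1^{\,2}$ and produces the two components noted above. Our choice $K=\GL(\Ep)\times\GL(\Em)$ avoids this constraint.
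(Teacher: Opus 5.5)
Your proof is correct and is essentially the paper's argument. Both exhibit $K_W$ as the full preimage of a graph-type subgroup under a surjection with connected kernel. The paper does this in one step, via $\rho:Q_+\times Q_-\to\GL(P_+/W_+)\times\GL(P_-/W_-)$ with $P_\pm=\pi_\pm(W)$. You do it in two steps, first passing to $\GL(V_1)\times\GL(V_2)$ and then using the projection $L\to\mathrm{Stab}_{\GL(V_1)}(U)$.

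Your closing aside overstates slightly: inside $S(\GL_p\times\GL_q)$, the constraint $\det(g_1)^2=1$ disconnects the stabiliser only when also $W_+=W_-=0$ (i.e.\ $p=q=k$), since otherwise the determinant of the block on $W_\pm$ absorbs it.
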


\begin{proof}
Let $\pi_\pm$ be the projections of $\C^n=\Ep\oplus\Em$, and put $W_\pm=W\cap E_\pm$ and $P_\pm=\pi_\pm(W)$, so that $W_\pm\subset P_\pm\subset E_\pm$.
\begin{itemize}
\item The quotient $W/(W_+\oplus W_-)$ is the graph of an isomorphism $\bar A:P_+/W_+\to P_-/W_-$.
\item $W$ is the preimage of this graph in $P_+\oplus P_-$.
\end{itemize}
Hence $(g_+,g_-)\in K$ preserves $W$ if and only if two conditions hold:
\begin{itemize}
\item $g_\pm$ preserves the flag $W_\pm\subset P_\pm$, i.e.\ $g_\pm$ lies in the corresponding parabolic subgroup $Q_\pm\subset\GL(E_\pm)$;
\item the induced maps $\bar g_\pm$ on $P_\pm/W_\pm$ satisfy $\bar g_-\bar A=\bar A\bar g_+$.
\end{itemize}
Consider the surjective homomorphism
\[
\rho:Q_+\times Q_-\to\GL(P_+/W_+)\times\GL(P_-/W_-).
\]
Then $K_W=\rho^{-1}(H)$, where $H=\{(x,\bar Ax\bar A^{-1})\}\cong\GL(P_+/W_+)$. The groups involved are connected:
\begin{itemize}
\item $H$ is connected;
\item $\ker\rho$ consists of block-triangular matrices whose middle diagonal block is the identity, and is isomorphic to a product of general linear groups and a unipotent group, hence connected.
\end{itemize}
An extension of a connected group by a connected group is connected, so $K_W$ is connected.

Note that $K_W$ is \emph{not} a parabolic subgroup of $K$ in general. For instance, if $p=q=k$ and $W$ is the graph of an isomorphism, then $K_W\cong\GL_p$, which has dimension smaller than a Borel subgroup of $K$. Stabilisers are parabolic only for points of closed $K$-orbits.
\end{proof}

\begin{lemma}[Closure relations]\label{lem:closure}
For admissible pairs, $\kappa(a,b)\subset\overline{\kappa(r,s)}$ if and only if $r\le a$ and $s\le b$.
\end{lemma}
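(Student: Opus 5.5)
Both implications will be proved separately. The first step uses semicontinuity of intersection dimensions. On $Z$, the function $W\mapsto\dim(W\cap\Ep)$ is upper semicontinuous, and so is $W\mapsto\dim(W\cap\Em)$. Indeed, $\dim(W\cap E)=k+\dim E-\rank[\,W\mid E\,]$, and rank is lower semicontinuous. So the sets $\{\dim W\cap\Ep\ge r\}$ and $\{\dim W\cap\Em\ge s\}$ are closed. Hence $\overline{\kappa(r,s)}$ lies in $\{\dim W\cap\Ep\ge r,\ \dim W\cap\Em\ge s\}$. This gives necessity: if $\kappa(a,b)\subset\overline{\kappa(r,s)}$, then $a\ge r$ and $b\ge s$.

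For sufficiency, I will construct explicit degenerations. Let $(a,b)$ and $(r,s)$ be admissible pairs with $r\le a$ and $s\le b$. Since $\overline{\kappa(r,s)}$ is closed and $K$-stable, it is enough to exhibit one point $W_0\in\kappa(a,b)$ that is a limit of points of $\kappa(r,s)$. The plan uses standard bases. Put $d=k-r-s$ and $d'=k-a-b$. Take
\[
W_0=\la e_1,\dots,e_a\ra\oplus\la e_{p+1},\dots,e_{p+b}\ra\oplus\la e_{a+i}+e_{p+b+i}:1\le i\le d'\ra,
\]
which lies in $\kappa(a,b)$ by Proposition~\ref{prop:K}. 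Now define a family $W_t$ by replacing $a-r$ of the vectors $e_j$ (for $r<j\le a$) with $e_j+t f_j$. Here the $f_j$ are fresh vectors in $\Em$, chosen among $e_{p+b+d'+1},\dots,e_n$. In the same way, replace $b-s$ of the vectors $e_{p+l}$ with $e_{p+l}+t g_l$, where the $g_l$ are fresh vectors in $\Ep$, chosen among $e_{a+d'+1},\dots,e_p$. For $t\ne0$, $W_t$ is spanned by $r$ vectors of $\Ep$, $s$ vectors of $\Em$, and $d$ "mixed" vectors, all built from disjoint sets of coordinate vectors. Then $W_t\cap\Ep$ has dimension exactly $r$ and $W_t\cap\Em$ has dimension exactly $s$, so $W_t\in\kappa(r,s)$. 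Also $W_t\to W_0$ as $t\to0$.

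The main obstacle is checking that enough fresh coordinate vectors exist. The mixed part of $W_t$ needs $d=d'+(a-r)+(b-s)$ independent vectors in each of $\Ep$ and $\Em$, pairwise disjoint from those already used. In $\Em$, the count is: $s$ vectors for $W_t\cap\Em$, then $d'+(b-s)$ vectors from the mixed vectors $e_{p+l}+tg_l$ and $e_{a+i}+e_{p+b+i}$, then $a-r$ fresh vectors $f_j$. The total is $s+d\le q$, which is exactly the admissibility condition $d\le q-s$ for $(r,s)$ in Proposition~\ref{prop:K}(i). The count in $\Ep$ is symmetric and totals $r+d\le p$. So the required fresh vectors exist precisely because $(r,s)$ is admissible.

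If the indexing becomes awkward, I would fall back on an iterative argument that lowers one index at a time. For example, I would show $\kappa(a,b)\subset\overline{\kappa(a-1,b)}$ whenever $(a-1,b)$ is admissible, and the corresponding statement with $b$ lowered. The relation then follows by induction, using transitivity of closure. In the one-step case the construction perturbs a single vector $e_a\mapsto e_a+tf$, and the dimension count reduces to $d'+1\le q-b$.
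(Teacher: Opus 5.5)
Your proof is correct and follows the paper's argument. Necessity comes from upper semicontinuity of $W\mapsto\dim W\cap E_\pm$, and sufficiency uses the same degeneration $x\mapsto x+t\varphi x$, $y\mapsto y+t\psi y$ applied to $a-r$ vectors of $W\cap\Ep$ and $b-s$ vectors of $W\cap\Em$, with the room for $\varphi,\psi$ coming exactly from admissibility of $(r,s)$. The only difference is that you use $K$-stability of $\overline{\kappa(r,s)}$ to reduce to a coordinate representative, so independence becomes a disjoint-support check, whereas the paper treats an arbitrary $W\in\kappa(a,b)$ and chooses $\varphi,\psi$ transversally.
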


\begin{proof}
Necessity follows from upper semicontinuity of $W\mapsto\dim W\cap E_\pm$.

For sufficiency, let $W\in\kappa(a,b)$, written as in the proof of Lemma~\ref{lem:connected} with $d=k-a-b$ and $\dim U'=d$. Choose splittings $W_+=W_+'\oplus X$ and $W_-=W_-'\oplus Y$ with $\dim W_+'=r$ and $\dim W_-'=s$. Put $d'=k-r-s=d+(a-r)+(b-s)$. Admissibility of $(r,s)$ gives $d'+s\le q$ and $d'+r\le p$, that is
\[
(a-r)+\dim(W_-+A(U'))\le q,\qquad (b-s)+\dim(W_++U')\le p .
\]
Hence there are injective maps $\varphi:X\to\Em$ and $\psi:Y\to\Ep$ with
\[
\varphi(X)\cap(W_-+A(U'))=0,\qquad \psi(Y)\cap(W_++U')=0 .
\]
For $t\in\C$ put
\[
W_t=W_+'\oplus W_-'\oplus\{x+t\varphi x\}\oplus\{y+t\psi y\}\oplus\{u+Au\}.
\]
The five summands are independent for all $t$, so $W_t$ is a $k$-plane and $W_0=W$.

Let $t\ne0$ and suppose $w=w_+'+w_-'+(x+t\varphi x)+(y+t\psi y)+(u+Au)\in\Ep$. Its $\Em$-component $w_-'+t\varphi x+y+Au$ must vanish. Since $W_-=W_-'\oplus Y$, and $\varphi(X)$, $A(U')$, $W_-$ are independent, this forces $w_-'=0$, $y=0$, $x=0$ and $u=0$. Hence $W_t\cap\Ep=W_+'$; symmetrically $W_t\cap\Em=W_-'$. So $W_t\in\kappa(r,s)$ for $t\ne0$, and $W\in\overline{\kappa(r,s)}$.
\end{proof}

\subsection{Matsuki duality and the structure theorem}

\begin{theorem}[Theorem~\ref{thmA}]\label{thm:structure}
For every admissible $(a,b,c)$, the $K$-orbit $\kappa(a,b)$ is Matsuki-dual to $O(a,b,c)$, and
\begin{align*}
M(a,b,c):={}&\kappa(a,b)\cap O(a,b,c)\\
={}&\bigl\{W_+\oplus W_-\oplus N:\ W_+\in\Gr_a(\Ep),\ W_-\in\Gr_b(\Em),\ N\in\Iso_c\bigl((W_+\oplus W_-)^{\perp_h}\bigr)\bigr\}.
\end{align*}
\begin{enumerate}[label=\textup{(\roman*)}]
\item The restriction of $\pi$ makes $M$ a $K_0$-homogeneous bundle over $\Gr_a(\Ep)\times\Gr_b(\Em)$ with fibre $\Iso_c(\C^{p-a,q-b})$, the closed $\SU(p-a,q-b)$-orbit in $\Gr_c(\C^{p+q-a-b})$.
\item $\dim_\R M=2a(p-a)+2b(q-b)+2c(p-a+q-b-c)-c^2$. The $K_0$-isotropy at $W_+\oplus W_-\oplus N$ is $\U(W_+)\times\U(W_-)\times\U(c)_\Delta\times\U(p-a-c)\times\U(q-b-c)$.
\item $M$ is a generic CR submanifold of $\kappa(a,b)$ of real codimension, and hence CR codimension, $c^2$.
\item The radical map $\nu:M\to\Iso_c(\C^n)$, $W\mapsto N_W$, is a $K_0$-equivariant bundle with compact complex fibres $\Gr_a(\Ep\cap N^{\perp_h})\times\Gr_b(\Em\cap N^{\perp_h})\cong\Gr_a(\C^{p-c})\times\Gr_b(\C^{q-c})$.
\end{enumerate}
\end{theorem}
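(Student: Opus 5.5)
I plan to first establish the set-theoretic description of $\kappa(a,b)\cap O(a,b,c)$ and then read off (i)--(iv) from it. Let $W\in\kappa(a,b)\cap O(a,b,c)$ and put $W_\pm=W\cap E_\pm$, of dimensions $a$ and $b$. Since $W_+$ is positive definite, $W_-$ is negative definite and they are $h$-orthogonal, $W_+\oplus W_-$ is non-degenerate of signature $(a,b)$. This gives $W=W_+\oplus W_-\oplus W'$ with $W'=W\cap(W_+\oplus W_-)^{\perp_h}$, and $\sig W'=(0,0,c)$. So $W'$ is totally isotropic of dimension $c$ in $V:=(W_+\oplus W_-)^{\perp_h}=(\Ep\ominus W_+)\oplus(\Em\ominus W_-)$. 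Conversely, take $N\in\Iso_c(V)$. By Lemma~\ref{lem:Iso}, $N$ is the graph of an isometric embedding $U\to\Em\ominus W_-$. Hence $N$ meets neither $E_\pm$, and no element of $N\oplus W_\mp$ lies in $E_\pm$ beyond $W_\pm$. So $W_+\oplus W_-\oplus N$ lies in $\kappa(a,b)$ and has signature $(a,b,c)$. This gives the displayed equality.

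Next I will show that this set is compact and a single $K_0$-orbit. It is the total space of a bundle with compact fibre $\Iso_c(V)$ over the compact base $\Gr_a(\Ep)\times\Gr_b(\Em)$. $K_0$ is transitive on the base, and the stabiliser $\U(W_+)\times\U(\Ep\ominus W_+)\times\U(W_-)\times\U(\Em\ominus W_-)$ is transitive on the fibre by Lemma~\ref{lem:Iso}. Matsuki duality is characterised by non-empty compact intersection, and it is a bijection. So $\kappa(a,b)$ is the dual of $O(a,b,c)$, which matches the bijection $(a,b,c)\mapsto(a,b)$ of Proposition~\ref{prop:K}. This proves (i). The dimension formula in (ii) is $\dim\Gr_a+\dim\Gr_b+\dim_\R\Iso_c(\C^{p-a,q-b})$, where the last term is given by Lemma~\ref{lem:Iso}. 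For the isotropy I fix $N$ as the graph of an isometry $U\to U'$: the stabiliser must preserve $W_\pm$, $U$ and $U'$ compatibly (giving the diagonal $\U(c)$), and acts freely on the orthogonal complements of $U$ and $U'$ in the fibre.

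For (iii) I compare dimensions using Proposition~\ref{prop:K}(iii) with $r=a$, $s=b$, $d=c$. This gives $2\dim_\C\kappa(a,b)-\dim_\R M=2c(p-a-c)+2c(q-b)-2c(p-a+q-b-c)+c^2=c^2$. Genericity in $\kappa$ will be the main point. I would argue as in Proposition~\ref{prop:G0}, working in a fibre of $\pi$. There $M$ is the closed $\U(p-a,q-b)$-type orbit $\Iso_c$ inside the open set $F$ of graphs, and since $\Iso_c$ is a generic CR submanifold of $\Gr_c(V)$ (Proposition~\ref{prop:G0} applied to $\SU(p-a,q-b)$), it is generic in the open subset $F$. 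Because $M\to B$ and $\kappa\to B$ are bundles over the same complex base with holomorphic $K_0$-transitive horizontal directions, I get $TM+JTM=T\kappa$. The CR codimension then equals the real codimension $c^2$.

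For (iv), the radical of $W_+\oplus W_-\oplus N$ is $N$, since $W_+\oplus W_-$ is non-degenerate and orthogonal to $N$. Given $N\in\Iso_c(\C^n)$, the fibre of $\nu$ is the set of $(W_+,W_-)$ with $W_\pm\subset E_\pm\cap N^{\perp_h}$. The space $N^{\perp_h}\cap\Ep$ is the orthogonal complement in $\Ep$ of $\mathrm{pr}_+N$, of dimension $p-c$, and similarly on the negative side. Equivariance and homogeneity make $\nu$ a bundle, with compact complex fibres $\Gr_a(\C^{p-c})\times\Gr_b(\C^{q-c})$. I expect the genericity claim in (iii) to be the only step needing care.
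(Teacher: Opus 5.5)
Your proposal is correct and follows the paper's proof essentially step for step: the same splitting $W=W_+\oplus W_-\oplus N$ in both directions, duality via Matsuki's non-empty-compact-intersection characterisation, the codimension count from Proposition~\ref{prop:K}(iii), and the same description of the $\nu$-fibres. The only real divergence is in (iii). There your fibrewise reduction to Proposition~\ref{prop:G0} is valid but unnecessary, since $K$ acts holomorphically and hence $T_W\kappa=\mathfrak k\cdot W=\mathfrak k_0\cdot W+J(\mathfrak k_0\cdot W)=T_WM+J\,T_WM$ directly, which is the paper's one-line argument. (In (ii) you get $\dim_\R M$ as base plus fibre rather than as $\dim K_0-\dim L_0$, which is equally fine, and your ``acts freely'' should read ``acts by arbitrary unitaries''.)
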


\begin{proof}
\emph{Description of $M$.} Let $W\in\kappa(a,b)\cap O(a,b,c)$ and put $W_\pm=W\cap E_\pm$, $N=N_W$. Then $N\perp_hW_+\oplus W_-$. On $W_+\oplus W_-$, $h(w_++w_-)=\|w_+\|^2-\|w_-\|^2$, so the only vector of $W_+\oplus W_-$ that is $h$-orthogonal to all of $W_+\oplus W_-$ is $0$. Hence $(W_+\oplus W_-)\cap N=0$, and by dimension $W=W_+\oplus W_-\oplus N$.

Conversely, let $W=W_+\oplus W_-\oplus N$ as in the statement. Its Gram matrix is $\operatorname{diag}(I_a,-I_b,0_c)$ in a suitable basis, so $\sig W=(a,b,c)$. If $w_++w_-+\nu\in\Ep$ with $\nu=\nu_++\nu_-$, then $w_-+\nu_-=0$. Now $\nu\perp_hW_-$ means $\nu_-\perp W_-$ in $\Em$, and $\nu_-=-w_-\in W_-$, so $\nu_-=0=w_-$. Then $\nu=\nu_+\in\Ep$ is isotropic, so $\nu=0$. Thus $W\cap\Ep=W_+$, and symmetrically $W\cap\Em=W_-$.

\emph{Homogeneity, compactness, duality.} $K_0$ is transitive on pairs $(W_+,W_-)$, and $\U(\Ep\ominus W_+)\times\U(\Em\ominus W_-)$ is transitive on $\Iso_c$ of the complement (Lemma~\ref{lem:Iso}). So the intersection is one compact $K_0$-orbit, and $\kappa(a,b)$ is dual to $O(a,b,c)$ by Matsuki's characterisation \cite{Matsuki88}, \cite[Ch.~8]{FHW}. This proves (i).

(ii) $\dim K_0-\dim L_0=p^2+q^2-\bigl(a^2+b^2+c^2+(p-a-c)^2+(q-b-c)^2\bigr)$, which simplifies to the stated value.

(iii) $T_WM$ is the image of $\mathfrak k_0$ and $T_W\kappa$ that of $\mathfrak k=\mathfrak k_0+i\mathfrak k_0$, so $M$ is generic in $\kappa$. By Proposition~\ref{prop:K}(iii), $2\dim_\C\kappa(a,b)-\dim_\R M=c^2$.

(iv) Given isotropic $N$, a plane $W_+\oplus W_-\oplus N$ lies in $M$ iff $W_\pm\subset E_\pm\cap N^{\perp_h}$. These spaces have dimensions $p-c$ and $q-c$, since $N$ projects injectively to $E_\mp$. The embedding $(W_+,W_-)\mapsto W_+\oplus W_-\oplus N$ is holomorphic.
\end{proof}

\begin{remark}
For $c=0$, $M=\Gr_a(\Ep)\times\Gr_b(\Em)$ is the base cycle of the flag domain $O(a,b,0)$; for maximal $c$, $M=\gamma$ is the closed orbit. Since $T\kappa$ is complex, $H(M)=H(\gamma)\cap T\kappa$. Part (iii) agrees with the general result of \cite{AMN10,AMN13} on the CR codimension of Matsuki orbits.
\end{remark}

%=====================================================================
\section{Algebraisation and the divisor criterion}\label{sec:alg}
%=====================================================================

In this section $G_0$, $Z=G/P$, $K_0$ and $K=K_0^\C$ are general, with $K_0$ compact and connected; $K$ is then a connected reductive algebraic group. For a $K$-orbit $\kappa'=Ky\cong K/K_y$, $\C[\kappa']=\C[K]^{K_y}$ denotes its algebra of regular functions.

\begin{theorem}[Theorem~\ref{thmB}]\label{thm:alg}
Let $y\in Z$ with $K_y$ connected, $U\supset K_0y$ a $K_0$-invariant open set, and $f\in\cO(U)$ $K_0$-finite. There is a unique $F\in\C[Ky]$ with $F=f$ on a neighbourhood of $K_0y$ in $Ky$. Conversely, every $F\in\C[Ky]$ is $K_0$-finite.
\end{theorem}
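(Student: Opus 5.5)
Since $f$ is $K_0$-finite, it generates a finite-dimensional $K_0$-submodule $V\subset\cO(U)$, and I would decompose $f$ accordingly. The plan is to pass to the orbit map $\phi:K\to Ky$, $k\mapsto ky$, and lift $f$ to the group. The preimage $\tilde U=\phi^{-1}(U\cap Ky)$ is an open neighbourhood of $K_0$ in $K$. It is invariant under left translation by $K_0$ and under right translation by $K_y$. The function $\tilde f=f\circ\phi$ is holomorphic on $\tilde U$, right $K_y$-invariant, and left $K_0$-finite. Since it is only defined near $K_0$, it is natural to view it through matrix coefficients. Choose a basis $f_1,\dots,f_m$ of $V$ with $f(k^{-1}x)=\sum_j a_{j}(k)f_j(x)$ for $k\in K_0$; this equivariance should hold near $K_0y$ for suitable coefficient functions. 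The coefficients $a_j$ are matrix coefficients of the finite-dimensional continuous representation $V$ of the compact group $K_0$. They are therefore real-analytic and extend uniquely to holomorphic, indeed regular, functions on $K=K_0^\C$.

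Concretely, on $K_0$ we have $\tilde f(k)=f(ky)=\sum_j \rho_j(k) f_j(y)$, where the $\rho_j$ are regular functions on $K$ (matrix coefficients of the algebraic representation extending $V$). Set $\Psi(k):=\sum_j\rho_j(k)f_j(y)\in\C[K]$. Then $\tilde f$ and $\Psi$ are both holomorphic on the connected component of $\tilde U$ containing $K_0$, and they agree on $K_0$. Because $K_0$ is a totally real submanifold of maximal dimension in $K$, the identity theorem gives $\tilde f=\Psi$ on that component. Next, $\Psi$ is right $K_y$-invariant. Right invariance under $K_y$ holds on a neighbourhood of the identity in $K_y$, since $\tilde f$ is invariant there. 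The function $\Psi(kh)-\Psi(k)$ is regular in $(k,h)\in K\times K_y$ and vanishes on an open set. Since $K_y$ is connected, $K\times K_y$ is irreducible, so this function vanishes identically. Hence $\Psi\in\C[K]^{K_y}=\C[Ky]$; call it $F$. Then $F=f$ on $\phi$ of the component of $\tilde U$ containing $K_0$, which is a neighbourhood of $K_0y$ in $Ky$.

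Uniqueness follows because $Ky$ is irreducible (it is the image of the connected group $K$), so a regular function vanishing on a nonempty open subset vanishes identically. For the converse, $\C[Ky]\subset\C[K]$ is a rational $K$-module under left translation, and every regular function lies in a finite-dimensional $K$-stable subspace; it is therefore $K_0$-finite.

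The main obstacle I expect is the bookkeeping in the first step. The difficulty is to make the $K_0$-action on $V$ genuinely a representation on functions defined on the possibly non-invariant pieces of $U\cap Ky$, and to ensure that $\tilde U$ has a connected component containing all of $K_0$, which uses that $K_0$ is connected. The connectedness of $K_y$ is exactly what is needed for the passage from local to global right invariance; without it $\Psi$ need only be invariant under $K_y^0$.
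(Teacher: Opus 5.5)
Your proposal is correct and follows essentially the paper's route: you extend the finite-dimensional $K_0$-module $V$ to a rational $K$-representation, compare $k\mapsto f(ky)$ with the resulting matrix coefficient on the component of $\{k: ky\in U\}$ containing $K_0$ using the identity theorem along the totally real $K_0$, and then descend to $Ky$. The one difference is that you obtain right $K_y$-invariance of $\Psi$ afterwards, by analytic continuation of $\Psi(kh)-\Psi(k)$ on the connected group $K\times K_y$, whereas the paper first shows infinitesimally that $\mathrm{ev}_y\circ\rho(h)=\mathrm{ev}_y$ on $V$ for $h\in K_y^0$ and then defines $F$; connectedness of $K_y$ plays the same role in both, and your bookkeeping worry is moot because $U$ is $K_0$-invariant, so the expansion of $f\circ k^{-1}$ holds on all of $U$.
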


\begin{proof}
Let $V\subset\cO(U)$ be the finite-dimensional span of the translates $f\circ k^{-1}$, $k\in K_0$, with $(\rho(k)g)(z)=g(k^{-1}z)$. A finite-dimensional continuous representation of the compact connected group $K_0$ extends uniquely to a rational representation of $K$ \cite[Ch.~III]{BtD}.

For $X\in\mathfrak k_0$ write $\xi_X(z)=\frac{d}{dt}|_{0}\exp(tX)z$; then $d\rho(X)g=-\xi_Xg$. Since $K$ acts holomorphically, the vector field of $X+iY$ is $\xi_X+J\xi_Y$, and $(Jv)g=i\,vg$ for holomorphic $g$. By complex linearity, $d\rho(\zeta)g=-\xi_\zeta g$ for all $\zeta\in\mathfrak k$. For $\zeta\in\mathfrak k_y$, $\xi_\zeta(y)=0$, so $\mathrm{ev}_y\circ d\rho(\zeta)=0$ on $V$. Since $V$ is $\rho$-stable, $\frac{d}{dt}\,\mathrm{ev}_y\circ\rho(\exp t\zeta)=\mathrm{ev}_y\circ d\rho(\zeta)\circ\rho(\exp t\zeta)=0$, so $\mathrm{ev}_y\circ\rho(h)=\mathrm{ev}_y$ for $h\in\exp(\mathfrak k_y)$. These elements generate the identity component $K_y^0$, and $K_y^0=K_y$ by hypothesis; this is the only place where connectedness is used (see Remark~\ref{rem:components}).

Define $F(ky)=(\rho(k^{-1})f)(y)$. This is well defined by the previous paragraph; it is regular, being a matrix coefficient of a rational representation; and it is right $K_y$-invariant.

Let $\Omega_0$ be the connected component of $\{k\in K:ky\in U\}$ containing $K_0$. On $\Omega_0$ the holomorphic functions $k\mapsto f(ky)$ and $k\mapsto F(ky)$ agree on $K_0$, which is a totally real submanifold of maximal dimension. Hence they agree on $\Omega_0$, and $F=f$ on the open set $\Omega_0y\supset K_0y$ of $Ky$. Uniqueness follows from the identity theorem on the connected manifold $Ky$. The converse holds because $\C[Ky]$ is a rational, hence locally finite, $K$-module.
\end{proof}

\begin{remark}[Disconnected stabilisers]\label{rem:components}
Connectedness of $K_y$ enters the proof only through the identity $\mathrm{ev}_y\circ\rho(h)=\mathrm{ev}_y$ on $V$ for $h\in K_y$. For $h\in(K_0)_y$ this holds trivially, since $(\rho(h)g)(y)=g(h^{-1}y)=g(y)$. The Lie algebra argument gives it for $h\in K_y^0$. Hence Theorem~\ref{thm:alg} holds whenever
\[
K_y=K_y^0\cdot(K_0)_y,
\]
that is, whenever every component of $K_y$ meets $K_0$. Without this condition the proof only yields a regular function on the finite cover $K/K_y^0(K_0)_y$ of $Ky$, which need not descend to $Ky$. So the hypothesis cannot simply be dropped from the argument, and it is the reason for the hypothesis in Theorem~\ref{thm:divisor}(ii). Theorem~\ref{thm:divisor}(ii) uses Theorem~\ref{thm:alg} only at points of $\kappa_0$ near $M$, so it holds whenever the weaker condition is satisfied at those points.

Two examples:
\begin{itemize}
\item For $\SL_{n+1}(\R)$ on $\PP^n$ (Example~\ref{ex:general}), the stabiliser of a real non-isotropic line is $S(\mathrm O_1\times\mathrm O_n)(\C)$. It is disconnected, but each component contains a real diagonal matrix, so the weaker condition holds.
\item For $\SL_3(\R)$ on full flags, the stabiliser $(\mathbb Z/2)^2$ of a real generic flag lies in $\mathrm{SO}_3$. At non-real generic flags it need not meet $K_0$.
\end{itemize}
\end{remark}

\begin{corollary}[Fourier expansion]\label{cor:fourier}
Let $U$ be a connected $K_0$-invariant open set and $f\in\cO(U)$. For an irreducible $K_0$-type $\delta$ of degree $d_\delta$ and character $\chi_\delta$, put $f_\delta(z)=d_\delta\int_{K_0}\overline{\chi_\delta(k)}f(k^{-1}z)\,dk$. Then:
\begin{enumerate}[label=\textup{(\roman*)}]
\item $f_\delta\in\cO(U)$ is $K_0$-finite of type $\delta$, and $\sup_L|f_\delta|\le d_\delta^2\sup_L|f|$ for every $K_0$-invariant $L\subset U$;
\item if $f_\delta=0$ for all $\delta\ne1$, then $f$ is $K_0$-invariant and $f=f_1$;
\item if $\sum_\delta f_\delta$ converges absolutely and locally uniformly on a $K_0$-invariant open set $U'\subset U$, its sum is $f$ there.
\end{enumerate}
\end{corollary}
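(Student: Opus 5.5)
The three claims are proved in order, using only Haar measure on $K_0$ and the orthogonality relations.

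For (i), write $f_\delta=d_\delta\,\overline{\chi_\delta}*f$, where the convolution is taken along the action. Then $f_\delta(z)=d_\delta\int_{K_0}\overline{\chi_\delta(k)}\,f(k^{-1}z)\,dk$.

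Holomorphy comes from differentiating under the integral. For $z$ in a small closed polydisc $\Delta\subset U$, the set $K_0\Delta$ is a compact subset of $U$, because $U$ is $K_0$-invariant. So the integrand is continuous in $(k,z)$ and holomorphic in $z$, and Morera's theorem, or differentiation under the integral, gives $f_\delta\in\cO(U)$.

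$K_0$-finiteness of type $\delta$ is the standard computation. Translating gives
\[
\rho(h)f_\delta=d_\delta\int\overline{\chi_\delta(h^{-1}k)}\,\rho(k)f\,dk .
\]
Now expand $\chi_\delta(h^{-1}k)=\sum_{i,j}\delta_{ij}(h^{-1})\,\delta_{ji}(k)$ in matrix coefficients of a unitary model of $\delta$. This shows that all translates of $f_\delta$ lie in the span of the $d_\delta^2$ functions $\int\overline{\delta_{ji}(k)}\,\rho(k)f\,dk$. Hence $f_\delta$ is $K_0$-finite, and by Schur orthogonality every $K_0$-submodule it generates is $\delta$-isotypic.

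The estimate follows from $|\chi_\delta|\le d_\delta$ on the compact group. For $z\in L$ we have $k^{-1}z\in L$ by invariance, so $|f_\delta(z)|\le d_\delta\cdot d_\delta\cdot\sup_L|f|$.

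For (ii) and (iii), I would restrict to a single orbit. Fix $z\in U$ and consider the continuous function $\phi_z(k)=f(k^{-1}z)$ on $K_0$. By Peter--Weyl, $\phi_z$ lies in $L^2(K_0)$, and its isotypic projections are $d_\delta\,\overline{\chi_\delta}*\phi_z$. Evaluating $(d_\delta\,\overline{\chi_\delta}*\phi_z)(e)$ gives $f_\delta(z)$, and more generally evaluating at $k$ gives $f_\delta(k^{-1}z)$. One must check the convention of convolution against the character conjugation, using $\overline{\chi(k)}=\chi(k^{-1})$ and the class-function property.

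In (ii), all nontrivial components of $\phi_z$ vanish. So $\phi_z$ equals its projection onto the invariant functions, which is the constant $\int f(k^{-1}z)\,dk=f_1(z)$. Hence $f(k^{-1}z)=f_1(z)$ for all $k$, and $f$ is $K_0$-invariant with $f=f_1$.

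In (iii), set $g=\sum_\delta f_\delta$ on $U'$. The series of $\phi_z$-components converges uniformly on $K_0$, because the convergence is locally uniform on the invariant set $U'$ and $K_0z$ is compact in $U'$. Its sum is therefore a continuous function whose $\delta$-projections agree with those of $\phi_z$; taking projections termwise is justified by the uniform convergence. Since $L^2$ Fourier coefficients determine a continuous function, the sum equals $\phi_z$, and evaluating at $e$ gives $g(z)=f(z)$.

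The main obstacle is bookkeeping rather than substance. One has to get the left/right translation and conjugation conventions consistent, so that $f_\delta(k^{-1}z)$ really is the $\delta$-projection of $\phi_z$ evaluated at $k$. One also needs uniform convergence on the compact orbit $K_0z$, which is exactly where $K_0$-invariance of $U'$ is used; connectedness of $U$ is not actually needed for these three statements.
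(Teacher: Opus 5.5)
Your proposal is correct and follows the paper's argument: restrict to the orbit function $\varphi_z(k)=f(k^{-1}z)$, identify its $\delta$-isotypic part with $k\mapsto f_\delta(k^{-1}z)$ using that $\chi_\delta$ is a class function, and conclude with Peter--Weyl and $|\chi_\delta|\le d_\delta$; your direct checks of holomorphy and $K_0$-finiteness in (i) fill in what the paper leaves implicit. On the convention you flagged: $\rho$ corresponds to right translation of $\varphi_z$, and the right-regular $\delta$-projection is exactly $k\mapsto f_\delta(k^{-1}z)$, whereas left convolution by $d_\delta\overline{\chi_\delta}$ gives $f_{\bar\delta}$ instead; this is harmless for (ii) and (iii), since conjugation permutes the types and fixes the trivial one.
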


\begin{proof}
For $z\in U$, the $\delta$-isotypic part of the continuous function $\varphi_z(k)=f(k^{-1}z)$ on $K_0$ is $k\mapsto f_\delta(k^{-1}z)$, because $\chi_\delta$ is a class function. Now apply the Peter--Weyl theorem and $|\chi_\delta|\le d_\delta$.
\end{proof}

\begin{proposition}[Codimension criterion]\label{thm:codim}
Let $M=\kappa\cap\gamma$ be a Matsuki orbit, $\kappa_0$ the open $K$-orbit, and
\[
Y_\kappa=\bigcup\{\kappa':\ \kappa\not\subset\overline{\kappa'}\}.
\]
Assume $K_y$ is connected for $y\in\kappa_0$. Then $Y_\kappa$ is a closed algebraic subset, and $\Omega=K\cdot U\supset Z\setminus Y_\kappa$ for every neighbourhood $U$ of $M$. If $\codim_\C Y_\kappa\ge2$, then $\cO(U)=\C$ for every connected neighbourhood $U$ of $M$ in $Z$.
\end{proposition}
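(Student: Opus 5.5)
Three claims must be proved: $Y_\kappa$ is closed and algebraic; $K\cdot U$ covers $Z\setminus Y_\kappa$; and, when $\codim_\C Y_\kappa\ge2$, every $f\in\cO(U)$ is constant. I take them in that order and then combine Theorem~\ref{thm:alg} with Hartogs extension.

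\emph{Step 1: $Y_\kappa$ is closed and algebraic.} There are finitely many $K$-orbits, each locally closed and algebraic, so $Y_\kappa$ is a constructible union of orbits. To see that it is closed, let $\kappa''\subset\overline{\kappa'}$ with $\kappa'\subset Y_\kappa$. If $\kappa\subset\overline{\kappa''}$, then $\kappa\subset\overline{\kappa'}$, which contradicts $\kappa'\subset Y_\kappa$. Hence $\kappa''\subset Y_\kappa$, so $Y_\kappa$ is a union of orbit closures and therefore closed.

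\emph{Step 2: $K\cdot U\supset Z\setminus Y_\kappa$.} Take a point of $Z\setminus Y_\kappa$. It lies in an orbit $\kappa'$ with $\kappa\subset\overline{\kappa'}$. Since $M\subset\kappa$, the orbit $\kappa'$ meets every neighbourhood of a point of $M$, and in particular meets $U$. Hence $\kappa'=K(\kappa'\cap U)\subset K\cdot U$. Note that $\kappa_0\not\subset Y_\kappa$, since $\overline{\kappa_0}=Z$.

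\emph{Step 3: reduction to the $K_0$-invariant, $K_0$-finite case.} Assume $\codim Y_\kappa\ge2$ and let $f\in\cO(U)$ with $U$ connected. I first replace $U$ by a smaller connected $K_0$-invariant neighbourhood $U_1$ of $M$. This is possible because $M$ is a $K_0$-orbit: take $K_0$-saturations of tubular neighbourhoods. By Corollary~\ref{cor:fourier}(ii), it suffices to show that every $K_0$-isotypic component $f_\delta$ with $\delta\ne1$ vanishes and that $f_1$ is constant. The identity theorem then transfers the conclusion from $U_1$ to $U$.

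\emph{Step 4: $K_0$-finite components are constant.} Fix such a $K_0$-finite $f_\delta$. Since $U_1$ is a neighbourhood of $M$ in $Z$, it meets $\kappa_0$. Choose $y\in\kappa_0$ near $M$ such that $U_1$ contains $K_0y$; this can be done by taking $y$ close to $M$ and using $K_0$-invariance of $U_1$. Theorem~\ref{thm:alg} gives $F\in\C[\kappa_0]$ with $F=f_\delta$ near $K_0y$ in $\kappa_0$, and hence on $U_1\cap\kappa_0$ by the identity theorem. The complement $Z\setminus\kappa_0$ is a finite union of orbits. The step I expect to be the main obstacle is showing that the regular function $F$ on $\kappa_0$ extends across all of $Z$.

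\begin{itemize}
\item Orbits in $Y_\kappa$ have codimension $\ge2$, so they present no obstruction to extending a regular function (algebraic Hartogs on the normal variety $Z$).
\item The remaining boundary orbits $\kappa'\not\subset Y_\kappa$ satisfy $\kappa'\subset K\cdot U$. Near such a point $z=kx$ with $x\in U_1$, the translate $f_\delta\circ k^{-1}$ gives a holomorphic function. Because $f_\delta$ is $K_0$-finite, the $K$-action on its span is rational, and $F\circ k^{-1}$ lies in the span of the translates of $F$. So $F\circ k^{-1}$ agrees with a holomorphic function near $z$, and $F$ is holomorphic near $kx$.
\end{itemize}

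A small care is needed here: the span of the translates of $f_\delta$ must extend holomorphically from $U_1$ along $k$. I would handle this by moving $k$ along a path from $K_0$ and using the rational representation from the proof of Theorem~\ref{thm:alg}. This makes $F$ a holomorphic function on $Z\setminus Y_\kappa$. Riemann--Hartogs extension across the codimension-$\ge2$ analytic set $Y_\kappa$ then extends it to all of $Z$. Since $Z$ is compact, $F$ is constant, so $f_\delta$ is constant. A non-trivial isotypic component cannot be a nonzero constant, which forces $f_\delta=0$ for $\delta\ne1$. Finally $f=f_1$ is constant.
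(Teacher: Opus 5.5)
Your proposal is correct and follows the paper's proof step for step: closedness of $Y_\kappa$ as a union of orbit closures, the covering $K\cdot U\supset Z\setminus Y_\kappa$, reduction to $K_0$-isotypic components, algebraisation on $\kappa_0$ via Theorem~\ref{thm:alg}, extension of $F$ to $K\cdot U$ by translates, Riemann extension across $Y_\kappa$, and constancy by compactness of $Z$. The step you flag needs no path argument. As the paper does, note that the image of the $K_0$-span of $f_\delta$ in $\C[\kappa_0]$ is a finite-dimensional $K_0$-stable subspace, hence $K$-stable by Zariski density of $K_0$ in $K$; so each $F\circ k$ agrees on the connected set $U_1\cap\kappa_0$ with some $g'$ in that span, and $F=g'\circ k^{-1}$ extends holomorphically to $kU_1$.
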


\begin{proof}
\emph{$Y_\kappa$ is closed.} If $\kappa''\subset\overline{\kappa'}$ and $\kappa'\subset Y_\kappa$, then $\kappa''\subset Y_\kappa$, since otherwise $\kappa\subset\overline{\kappa''}\subset\overline{\kappa'}$. So $Y_\kappa$ is a finite union of orbit closures.

\emph{$K\cdot U\supset Z\setminus Y_\kappa$.} If $\kappa\subset\overline{\kappa'}$, then $\kappa'$ meets every neighbourhood of every point of $M$, hence meets $U$, so $\kappa'\subset KU$.

\emph{Vanishing.} Shrinking $U$, we may assume $U$ is connected and $K_0$-invariant; constancy then propagates to the original connected neighbourhood by the identity theorem. Let $f\in\cO(U)$ and fix $\delta$. Let $V_\delta\subset\cO(U)$ be the finite-dimensional $K_0$-span of $f_\delta$. The set $U\cap\kappa_0$ is non-empty and connected, since $Z\setminus\kappa_0$ is a proper analytic subset.

By Theorem~\ref{thm:alg}, applied at points of $U\cap\kappa_0$ and combined with the identity theorem on $U\cap\kappa_0$, each $g\in V_\delta$ equals on $U\cap\kappa_0$ a unique $G_g\in\C[\kappa_0]$. The map $g\mapsto G_g$ is linear, injective and $K_0$-equivariant. Its image is a $K_0$-stable finite-dimensional subspace of the rational $K$-module $\C[\kappa_0]$, hence $K$-stable, because $K_0$ is Zariski dense in $K$.

Thus for $k\in K$ there is $g'\in V_\delta$ with $G_{f_\delta}\circ k^{-1}=G_{g'}$, so $G_{f_\delta}=g'\circ k$ on $k^{-1}U\cap\kappa_0$. Hence $G_{f_\delta}$ extends holomorphically to $k^{-1}U$, and these extensions agree on overlaps, which contain the dense set $\kappa_0$. So $G_{f_\delta}$ extends to a holomorphic function on $KU\supset Z\setminus Y_\kappa$.

If $\codim Y_\kappa\ge2$, the Riemann extension theorem across analytic sets of codimension $\ge2$ \cite[Ch.~7]{GrauertRemmert} extends it to the compact connected manifold $Z$, where it is constant. Therefore $f_\delta$ is constant. A constant is $K_0$-invariant, so $f_\delta=0$ for $\delta\ne1$, and $f=f_1$ is constant by Corollary~\ref{cor:fourier}(ii).
\end{proof}

\begin{lemma}\label{lem:divisors}
$\codim_\C Y_\kappa\ge2$ if and only if every $K$-stable prime divisor of $Z$ contains $\kappa$. Moreover, a $K$-stable closed subset $D$ either contains $\kappa$ or is disjoint from it.
\end{lemma}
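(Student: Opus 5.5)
I will prove the second assertion first, since it is used for the first. Let $D\subset Z$ be closed and $K$-stable. Then $D$ is a union of $K$-orbits, so $D\cap\kappa$ is either empty or all of $\kappa$, because $\kappa$ is a single $K$-orbit. Hence $D$ either contains $\kappa$ or is disjoint from it.

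For the equivalence, I use the description of $Y_\kappa$ from Proposition~\ref{thm:codim}. It is a finite union of $K$-orbit closures $\overline{\kappa'}$, each with $\kappa\not\subset\overline{\kappa'}$. Each such closure is an irreducible closed $K$-stable subvariety, since it is the closure of an orbit of the connected group $K$.

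Suppose first that $\codim Y_\kappa=1$. Then some irreducible component of $Y_\kappa$ has codimension one. This component is one of the closures $\overline{\kappa'}$ in the finite union, and I call it $D$. It is a $K$-stable prime divisor. Since $\overline{\kappa'}\subset Y_\kappa$ and $\kappa\not\subset Y_\kappa$, the divisor $D$ does not contain $\kappa$. Here I need $\kappa\not\subset Y_\kappa$, which holds because $\kappa\subset\overline\kappa$, so $\kappa$ is not among the orbits defining $Y_\kappa$. I also note that $Y_\kappa\neq Z$, since the open orbit $\kappa_0$ has closure $Z\supset\kappa$ and so is not in $Y_\kappa$; hence the codimension is at least one. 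This proves the implication: if every $K$-stable prime divisor contains $\kappa$, then $\codim Y_\kappa\ge2$.

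Conversely, let $D$ be a $K$-stable prime divisor with $\kappa\not\subset D$. Since $D$ is irreducible, $K$-stable and a finite union of $K$-orbits, it is the closure of a single orbit $\kappa'$, namely the one orbit in $D$ of maximal dimension. Then $\kappa\not\subset\overline{\kappa'}=D$, so $\kappa'\subset Y_\kappa$ and $D\subset Y_\kappa$. This gives $\codim Y_\kappa\le1$.

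The only mildly delicate point is the step that an irreducible $K$-stable closed set is a single orbit closure. It follows from the finiteness of $K$-orbits: $D$ is a finite union of orbit closures, and irreducibility forces $D$ to equal one of them.
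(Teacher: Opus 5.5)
Your proof is correct and follows the paper's argument essentially step for step. In both, the irreducible components of $Y_\kappa$ are orbit closures not containing $\kappa$, and conversely a $K$-stable prime divisor is the closure of its dense orbit by finiteness of orbits. Your extra check that $Y_\kappa\neq Z$ (via the open orbit) fills in a small detail the paper leaves implicit.
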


\begin{proof}
The last statement holds because $\kappa$ is a single orbit. The irreducible components of $Y_\kappa$ are orbit closures $\overline{\kappa'}$ with $\kappa\not\subset\overline{\kappa'}$. A component of codimension one is therefore a $K$-stable prime divisor not containing $\kappa$. Conversely, a $K$-stable prime divisor $D$ is the closure of its dense $K$-orbit $\kappa'$, since there are finitely many orbits. If $\kappa\not\subset D$, then $\kappa'\subset Y_\kappa$, so $D\subset Y_\kappa$ and $\codim Y_\kappa=1$.
\end{proof}

\begin{theorem}[Theorem~\ref{thmC}]\label{thm:divisor}
Let $M=\kappa\cap\gamma$ be a Matsuki orbit in a $G$-flag manifold $Z$.
\begin{enumerate}[label=\textup{(\roman*)}]
\item If a $K$-stable prime divisor $D$ does not contain $\kappa$, then $M\subset Z\setminus D$. There is a surjective morphism $\varphi$ from $Z$ onto a projective variety $Z''$ and a hyperplane section $H$ of $Z''$ with $Z\setminus D=\varphi^{-1}(Z''\setminus H)$, where $Z''\setminus H$ is affine of positive dimension. In particular $\cO(M)$ is infinite-dimensional.
\item If every $K$-stable prime divisor contains $\kappa$, and $K_y$ is connected for $y$ in the open $K$-orbit, then $\cO(U)=\C$ for every connected neighbourhood $U$ of $M$ in $Z$.
\end{enumerate}
\end{theorem}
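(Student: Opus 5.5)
Part (ii) follows by combining earlier results. By Lemma~\ref{lem:divisors}, the hypothesis that every $K$-stable prime divisor contains $\kappa$ is equivalent to $\codim_\C Y_\kappa\ge2$. The stabiliser hypothesis is exactly the one required in Proposition~\ref{thm:codim}. That proposition then gives $\cO(U)=\C$ for every connected neighbourhood $U$ of $M$. Nothing further is needed beyond checking that the hypotheses match.

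For part (i), the first claim is immediate from Lemma~\ref{lem:divisors}. Since $D$ is $K$-stable, closed and does not contain $\kappa$, it is disjoint from $\kappa\supset M$.

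The real work is constructing $\varphi$. Since $Z=G/P$ is smooth and simply connected, $\mathrm{Pic}(Z)$ is discrete. Hence the connected group $K$ fixes the class of $\cO_Z(D)$, and after passing to a finite cover of $K$ we may assume $K$ is linearised on $L=\cO_Z(D)$. The canonical section $s_D$ is then a $K$-semi-invariant. The line bundle $L$ is effective on $G/P$, hence globally generated; on flag manifolds effective line bundles correspond to dominant weights. So the complete linear system $|L|$ defines a surjective morphism $\varphi:Z\to Z''\subset\PP(H^0(Z,L)^*)$, whose image $Z''$ is again a flag manifold $G/P''$ with $P''\supset P$. By construction $D=\varphi^{-1}(H)$ with $H=\{s_D=0\}$ a hyperplane section. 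Since $\cO_{Z''}(1)$ is ample, $Z''\setminus H$ is affine.

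The step I expect to be the main obstacle is showing that $Z''\setminus H$ has positive dimension. For this I argue that $Z''$ is not a point: otherwise $L$ would be trivial and $D$ empty. Then $H$ is a proper hyperplane section of a positive-dimensional projective variety, so its complement is a nonempty affine variety of positive dimension.

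It remains to show that $\cO(M)$ is infinite-dimensional. The coordinate ring $\C[Z''\setminus H]$ is infinite-dimensional; for instance, the quotients $t/s_D$ with $t\in H^0(Z,L)$ already give non-constant functions, and their powers are linearly independent. Pulling these back by $\varphi$ gives holomorphic functions on the open set $Z\setminus D\supset M$. To see that the germs along $M$ remain infinite-dimensional, I use that $Z\setminus D$ is connected and the pulled-back functions are distinct there. By the identity theorem, distinct functions on a connected open set have distinct germs along the nonempty open piece near $M$. In fact the germs along $M$ are determined on a neighbourhood, so linear independence is preserved.
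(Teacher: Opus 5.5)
Your proposal is correct and follows the paper's proof essentially step by step. Part (ii) comes from Lemma~\ref{lem:divisors} together with Proposition~\ref{thm:codim}; part (i) uses the globally generated bundle $\cO_Z(D)$, the morphism $\varphi$ it defines, the fact that $Z''$ is not a point because $D\neq\emptyset$, and the identity theorem on the connected set $Z\setminus D$ to make powers of a non-constant pulled-back function independent as germs along $M$. The $K$-linearisation of $L$, the semi-invariance of $s_D$ and the identification $Z''=G/P''$ are all correct, but the argument does not need them.
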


\begin{proof}
(ii) follows from Lemma~\ref{lem:divisors} and Proposition~\ref{thm:codim}.

(i) By Lemma~\ref{lem:divisors}, $\kappa\cap D=\emptyset$, so $M\subset Z\setminus D$. On a flag manifold every line bundle with a non-zero section is globally generated \cite{Brion05}. Let $L=\cO(D)$ with canonical section $s_D$, and let $\varphi:Z\to\PP(H^0(Z,L)^*)$ be the morphism defined by $L$, with image $Z''$. Then $s_D=\varphi^*\sigma$ for a linear form $\sigma$, and $D=\varphi^{-1}(H)$ with $H=Z''\cap\{\sigma=0\}$. So $Z\setminus D=\varphi^{-1}(Z''\setminus H)$, and $Z''\setminus H$ is affine.

Since $D$ is non-empty and $L$ is globally generated, $\dim H^0(Z,L)\ge2$, so $Z''$ is not a point. Choose a non-constant regular function $g$ on the irreducible affine variety $Z''\setminus H$. Then $g\circ\varphi$ is regular on $Z\setminus D$. It is non-constant on every non-empty open subset: otherwise, by the identity theorem on the connected manifold $Z\setminus D$, $g$ would be constant on $\varphi(Z\setminus D)=Z''\setminus H$. Hence the powers $(g\circ\varphi)^j$, $j\ge0$, are linearly independent germs along $M$.
\end{proof}

\begin{example}\label{ex:general}
\begin{itemize}
\item For $\SU(p,q)$ on $\PP^{n-1}$ and the closed orbit $\Sigma=\{h=0\}$, $\kappa$ is the open $K$-orbit. The $K$-stable proper subvarieties are $\PP(\Ep)$ and $\PP(\Em)$, of codimensions $q$ and $p$. So $\Sigma$ carries non-constant germs iff $p=1$ or $q=1$, when $\Sigma$ bounds a ball.
\item For the non-Hermitian real form $\SL_{n+1}(\R)$ on $\PP^n$, $K=\mathrm{SO}_{n+1}(\C)$ and the quadric $Q$ of isotropic lines is a $K$-stable prime divisor. The closed orbit $\R\PP^n$ is the Matsuki orbit of the open $K$-orbit $\PP^n\setminus Q$. By Theorem~\ref{thm:divisor}(i), $\cO(\R\PP^n)$ is infinite-dimensional. So non-constant germs occur also for non-Hermitian real forms; there they arise from totally real Matsuki orbits.
\end{itemize}
\end{example}

\begin{proposition}\label{prop:RPn}
For $\SL_{n+1}(\R)$ acting on $\PP^n$, the totally real orbit $\R\PP^n$ is its own holomorphic hull in $\PP^n$, and $\cO(\R\PP^n)$ is infinite-dimensional.
\end{proposition}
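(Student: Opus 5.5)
Two things have to be shown: $\cO(\R\PP^n)$ is infinite-dimensional, and $\widehat S=\R\PP^n$ satisfies (H1) and (H2) of Definition~\ref{def:hull}.

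\emph{Infinite-dimensionality.} This follows from Theorem~\ref{thm:divisor}(i), as noted in Example~\ref{ex:general}. To keep the later steps concrete I would also write the functions explicitly. The quadric is $Q=\{q(z)=\sum z_j^2=0\}$, and on $\PP^n\setminus Q$ the functions
\[
g_{ij}=\frac{z_iz_j}{q(z)}
\]
are regular. They are non-constant, and their monomials are linearly independent germs along $\R\PP^n\subset\PP^n\setminus Q$.

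\emph{Condition (H1).} Taking $\widehat S=S=\R\PP^n$, this condition is trivial: one takes $\Omega=U$ and $F=f$.

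\emph{Condition (H2).} This is the main step. Fix $y=[w]\notin\R\PP^n$. I need a function, holomorphic near $\R\PP^n$, that cannot be continued to any connected neighbourhood of $\R\PP^n\cup\{y\}$. I will use the pole/zero device described after Definition~\ref{def:hull}: it suffices to find a rational function on $\PP^n$ that is holomorphic and non-vanishing near $\R\PP^n$ and has a pole at $y$.

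Since $w$ is not proportional to a real vector, $w$ and $\bar w$ are linearly independent over $\C$. Hence the real hyperplane condition below can be arranged. Choose a real linear form $\ell(z)=\sum a_jz_j$ with $a\in\R^{n+1}$ such that $\ell(w)=0$. This is possible because the real and imaginary parts of $w$ span a real subspace of dimension at most $2\le n+1$, so I take $a\perp_{\R}\operatorname{Re}w$ and $a\perp_{\R}\operatorname{Im}w$, with $a\ne0$. This needs $n+1\ge3$, i.e.\ $n\ge2$; the case $n=1$ is handled separately, where a real linear form vanishing at a non-real point is simply chosen as $\ell(z)=(z-w)(z-\bar w)$ in an affine coordinate, which is real-valued on $\R\PP^1$.

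The zero set of $\ell$ does meet $\R\PP^n$, so I modify the construction. Set
\[
F=\frac{q(z)}{\ell(z)\overline{\ell}(z)}.
\]
This is not holomorphic, so instead I use
\[
F=\frac{q(z)}{\ell_1(z)^2+\ell_2(z)^2},
\]
where $\ell_1,\ell_2$ are real linear forms with $\ell_1(w)=0$ and $\ell_2(w)=0$. The denominator is a real quadratic form. On real points it vanishes only where $\ell_1=\ell_2=0$, which generally meets $\R\PP^n$ when $n\ge2$.

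The cleanest choice is therefore a positive-definite real quadratic form $Q'$ with $Q'(w)=0$, and then the function
\[
F=\frac{q}{Q'}.
\]
The form $Q'$ is non-vanishing on $\R\PP^n$, so $F$ is holomorphic near $\R\PP^n$ and has a pole at $y$ unless $q(w)=0$ as well.

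Such a $Q'$ exists in the following way. Apply a real orthogonal change of coordinates so that $\operatorname{Re}w$ and $\operatorname{Im}w$ lie in the span of $e_0,e_1$. Then the condition becomes the case $n=1$ for $[w_0:w_1]$. There I take $Q'=|$ the real binary form $(z_0-\lambda z_1)(z_0-\bar\lambda z_1)$, which is positive definite on $\R^2$, and add $\sum_{j\ge2}z_j^2$.

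If $q(w)=0$ too, I replace the numerator $q$ by a positive-definite real form not vanishing at $w$, for instance $q+\sum z_j^2$ perturbed, so that $F$ is still holomorphic near $\R\PP^n$ but genuinely has a pole at $y$. Then $f=F$ violates (H2) by the identity-theorem device, using that the polar set $\{Q'=0\}$ is a proper analytic subset.

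The main obstacle is producing a positive-definite real quadratic form vanishing at the prescribed non-real point, together with making sure the numerator does not cancel the pole there.
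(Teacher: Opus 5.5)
Your argument is correct. For (H1) and for infinite-dimensionality you argue as the paper does. For (H2) you take a different and more elementary route.

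\textbf{Comparison with the paper.} The paper splits into two cases.
\begin{itemize}
\item If $y\in Q$, it uses $\ell^2/q$.
\item If $y\notin Q$, it embeds $\PP^n\setminus Q$ into $\mathrm{Sym}_{n+1}(\C)$ by $[x]\mapsto xx^T/q(x)$. It then separates the image of $y$ from the compact set of real rank-one projections by a Taylor polynomial of $\exp(-\operatorname{tr}((X-A)^2))$.
\end{itemize}
You use a single mechanism instead. Since $\operatorname{Re}w$ and $\operatorname{Im}w$ are $\R$-linearly independent, there is a positive-definite real quadratic form $Q'$ in which they are orthonormal, and then $Q'(w)=0$. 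A degree-zero rational function with denominator $Q'$ has its poles on a quadric that has no real points and passes through $y$.

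The paper's first case is exactly your construction with $Q'=q$. You simply move the real-point-free quadric so that it passes through $y$. This removes both the case distinction and the approximation argument.

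What the paper's route buys is that every separating function is a regular function on the affine $K$-orbit $\kappa=\PP^n\setminus Q$, with poles only on the $K$-stable divisor. So it shows that $\R\PP^n$ is convex with respect to $\C[\kappa]$. This is the mechanism stressed in the remark after the proposition and shared with Proposition~\ref{prop:exceptional}. Your functions have poles on non-$K$-stable quadrics and do not lie in $\C[\kappa]$. They prove the statement as posed, but not this stronger structural fact.

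\textbf{Small corrections.}
\begin{itemize}
\item When $q(w)=0$, your suggested numerator $q+\sum z_j^2=2q$ still vanishes at $w$. The numerator need not be positive definite at all. It only has to be homogeneous of degree $2$ and nonzero at $w$, for instance $z_j^2$ with $w_j\ne0$. If you always use such a numerator, the case $q(w)=0$ needs no separate treatment.
\item The monomials in the $g_{ij}$ are not linearly independent; for example $\sum_i g_{ii}=1$. What is true is that the powers of a single non-constant $g_{ij}$ are independent germs. That suffices, and it is exactly what Theorem~\ref{thm:divisor}(i) provides.
\item Delete the abandoned attempts from the write-up: the real linear form $\ell$, the denominator $\ell_1^2+\ell_2^2$, and the ``linear form'' $(z-w)(z-\bar w)$ for $n=1$, which is actually quadratic.
\end{itemize}
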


\begin{proof}
Let $q(x)=\sum x_j^2$, so that $Q=\{q=0\}$. The map $[x]\mapsto xx^{T}/q(x)$ is a $K$-equivariant closed embedding of the affine variety $\PP^n\setminus Q$ into the space $\mathrm{Sym}_{n+1}(\C)$ of complex symmetric matrices. Its image is $\{X:X^2=X,\ X^T=X,\ \operatorname{tr}X=1\}$.

The real points of the image are the real symmetric rank-one projections. These are exactly the images of points of $\R\PP^n$, and they form a compact subset $C\subset\mathrm{Sym}_{n+1}(\R)$.

Infinite-dimensionality follows from Theorem~\ref{thm:divisor}(i). Condition (H1) is trivial, so it remains to verify (H2). Let $y\in\PP^n\setminus\R\PP^n$; there are two cases.

\emph{Case $y\in Q$.} Choose a real linear form $\ell$ with $\ell(y)\ne0$. The function $\ell^2/q$ is holomorphic on $\PP^n\setminus Q$, hence near $\R\PP^n$, where $q>0$. It is unbounded near $y$. Apply the device after Definition~\ref{def:hull} with $\Omega'=\Omega\setminus Q$.

\emph{Case $y\notin Q$.} The image $Y$ of $y$ is not real, so $Y=A+iB$ with $A,B$ real symmetric and $B\ne0$. Put
\[
h(X)=\exp\big(-\operatorname{tr}((X-A)^2)\big).
\]
This is an entire function. For real $X$ we have $|h(X)|=\exp(-\|X-A\|^2)\le1$, while $h(Y)=\exp(\|B\|^2)>1$. The Taylor series of the entire function $h$ converges uniformly on the compact set $C\cup\{Y\}$. Choosing a Taylor polynomial $P$ with $|P-h|<\tfrac12(|h(Y)|-1)$ there gives $|P(Y)|>\tfrac12(|h(Y)|+1)>\max_C|P|$.

Let $F=P-P(Y)$, pulled back to $\PP^n\setminus Q$. It is holomorphic and non-vanishing near $\R\PP^n$, and $F(y)=0$. The device again applies with $\Omega'=\Omega\setminus Q$.
\end{proof}

\begin{remark}
Proposition~\ref{prop:RPn} and Proposition~\ref{prop:exceptional} share a mechanism. In both cases the Matsuki orbit is a compact totally real orbit $K_0/(K_0)_y$ of maximal dimension in an \emph{affine} $K$-orbit $\kappa=K/K_y$; this is the complexification of $\R\PP^n$, respectively $\U(p)$. In both cases it is its own hull, cut out by regular functions on $\kappa$ with poles on the $K$-stable divisor. The difference between the two examples lies elsewhere:
\begin{itemize}
\item for $\SU(p,q)$, non-constant germs along lower-dimensional orbits require the Harish-Chandra chart, and then produce the larger hulls $\overline\cD\cap V_c$;
\item for $\SL_{n+1}(\R)$ on $\PP^n$, they come from the quadric $Q$, which is not the boundary of a bounded symmetric domain.
\end{itemize}
\end{remark}

%=====================================================================
\section{Proof of the main theorem}\label{sec:main}
%=====================================================================

\subsection{Orbits not of holomorphic type}

\begin{lemma}[Schubert codimension]\label{lem:schubert}
Let $F\subset\C^n$ have dimension $f$ and let $\max(0,k+f-n)<j\le\min(k,f)$. Then $\Sigma_j(F)=\{W\in\Gr_k(\C^n):\dim W\cap F\ge j\}$ is an irreducible subvariety of codimension $j(n-k-f+j)$.
\end{lemma}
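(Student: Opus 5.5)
The plan is to compute the dimension of $\Sigma_j(F)$ directly, via an incidence correspondence, and then read off the codimension as $k(n-k)$ minus that dimension.

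\emph{Incidence variety.} Consider
\[
I=\{(L,W)\in\Gr_j(F)\times\Gr_k(\C^n):\ L\subset W\}.
\]
The first projection $I\to\Gr_j(F)$ is a Zariski locally trivial fibre bundle. Its fibre over $L$ is $\Gr_{k-j}(\C^n/L)$. Hence $I$ is smooth and irreducible, of dimension
\[
j(f-j)+(k-j)(n-k).
\]

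\emph{Image and irreducibility.} The second projection is proper, and its image is exactly $\Sigma_j(F)$. So $\Sigma_j(F)$ is closed and irreducible.

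\emph{Generic finiteness of the fibre.} Next I will show that the second projection is birational onto its image. The fibre over $W$ is $\Gr_j(W\cap F)$, which is a single point precisely when $\dim W\cap F=j$. This locus is non-empty because $j\ge k+f-n$ and $j\le\min(k,f)$. Concretely, take a basis adapted to $F$ and let $W$ be spanned by $j$ vectors of $F$ together with $k-j$ vectors of a complement of $F$; this needs $k-j\le n-f$. The locus $\{\dim W\cap F=j\}$ is open in $\Sigma_j(F)$ by upper semicontinuity, hence dense by irreducibility.

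\emph{Dimension count.} Birationality gives $\dim\Sigma_j(F)=\dim I$. Therefore
\[
\codim=k(n-k)-j(f-j)-(k-j)(n-k)=j(n-k)-j(f-j)=j(n-k-f+j).
\]

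\emph{Where the hypotheses enter.} The strict inequality $j>k+f-n$ is used only to ensure that $\Sigma_j(F)$ is a proper subvariety. Indeed, for $j\le k+f-n$ every $W$ satisfies $\dim W\cap F\ge k+f-n\ge j$, so $\Sigma_j(F)=\Gr_k(\C^n)$; the formula would then give codimension $\le0$. The strict inequality also makes the codimension positive. There is no real obstacle; the only point needing care is the existence of a plane with $\dim W\cap F$ exactly $j$, which the explicit basis construction above settles.
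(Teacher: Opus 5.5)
Your proposal is correct and follows essentially the same route as the paper: the incidence variety as a $\Gr_{k-j}(\C^n/L)$-bundle over $\Gr_j(F)$, birationality of its projection onto $\Sigma_j(F)$, and the dimension count. You also check explicitly that the locus $\{\dim W\cap F=j\}$ is non-empty and dense in $\Sigma_j(F)$, which the paper simply asserts.
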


\begin{proof}
The incidence variety $\{(L,W):L\in\Gr_j(F),\ L\subset W\}$ is a $\Gr_{k-j}(\C^n/L)$-bundle over $\Gr_j(F)$, of dimension $j(f-j)+(k-j)(n-k)$. Its projection onto $\Sigma_j(F)$ is birational, since a general $W\in\Sigma_j(F)$ meets $F$ in exactly $j$ dimensions. So $\codim\Sigma_j(F)=k(n-k)-j(f-j)-(k-j)(n-k)=j(n-k-f+j)$; see also \cite[\S9.4]{Fulton97}.
\end{proof}

\begin{theorem}[Theorem~\ref{thmD}(i)]\label{thm:vanishing}
If $M=M(a,b,c)$ is not of holomorphic type, then $\cO(U)=\C$ for every connected neighbourhood $U$ of $M$ in $Z$. This includes $c=0$, i.e.\ base cycles of flag domains not of holomorphic type.
\end{theorem}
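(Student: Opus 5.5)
The plan is to deduce the statement from Theorem~\ref{thm:divisor}(ii). By Lemma~\ref{lem:connected}, every stabiliser $K_W$ for $K=\GL(\Ep)\times\GL(\Em)$ is connected. In particular this holds on the open $K$-orbit, so the stabiliser hypothesis is automatic. It therefore suffices to show the following: if $M(a,b,c)$ is not of holomorphic type, then every $K$-stable prime divisor of $Z=\Gr_k(\C^n)$ contains $\kappa(a,b)$. Nothing in the argument uses $c$, so the base cycles ($c=0$) are covered as well.

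\textbf{Step 1: classify the $K$-stable prime divisors.} Such a divisor is the closure of a $K$-orbit $\kappa(r,s)$, because $K$ has finitely many orbits. Let $\kappa(r_0,s_0)$ be the open orbit, with $r_0=\max(0,k-q)$ and $s_0=\max(0,k-p)$. By Lemma~\ref{lem:closure} and upper semicontinuity, every non-open orbit has $r>r_0$ or $s>s_0$. Hence its closure lies in $\Sigma_{r_0+1}(\Ep)$ or in $\Sigma_{s_0+1}(\Em)$.

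These two sets are irreducible by Lemma~\ref{lem:schubert}, and their codimensions are $(r_0+1)(q-k+r_0+1)$ and $(s_0+1)(p-k+s_0+1)$. Consider the first. If $k\le q$, then $r_0=0$ and the codimension is $q-k+1$, which equals $1$ iff $k=q$. If $k>q$, then $r_0=k-q$ and the codimension is $k-q+1\ge2$. The second set is symmetric.

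So a $K$-stable prime divisor can only be $D_+=\{W\cap\Ep\ne0\}$, which occurs iff $k=q$, or $D_-=\{W\cap\Em\ne0\}$, which occurs iff $k=p$. When $k=q$, $D_+$ is indeed a divisor: it is irreducible by Lemma~\ref{lem:schubert}, $K$-stable, and of codimension one. The same holds for $D_-$ when $k=p$. Both occur if $p=q=k$.

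\textbf{Step 2: check which divisors contain $\kappa(a,b)$.} A plane $W\in\kappa(a,b)$ satisfies $\dim W\cap\Ep=a$ and $\dim W\cap\Em=b$. Hence $\kappa(a,b)\subset D_+$ iff $a\ge1$, and $\kappa(a,b)\subset D_-$ iff $b\ge1$; otherwise the orbit is disjoint from the divisor, consistent with Lemma~\ref{lem:divisors}. A divisor therefore misses $\kappa(a,b)$ exactly when ($k=q$ and $a=0$) or ($k=p$ and $b=0$). This is precisely Definition~\ref{def:holtype}.

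Consequently, if $M$ is not of holomorphic type, every $K$-stable prime divisor contains $\kappa(a,b)$. Equivalently, by Lemma~\ref{lem:divisors}, $\codim Y_\kappa\ge2$. Proposition~\ref{thm:codim}, or directly Theorem~\ref{thm:divisor}(ii), then gives $\cO(U)=\C$ for every connected neighbourhood $U$ of $M$.

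\textbf{Main obstacle.} The only delicate point is the completeness of Step 1. One must make sure that no other orbit closure has codimension one, for example an intersection of two rank conditions. The containment argument above handles this. The divisor cannot be a proper subset of $\Sigma_{r_0+1}(\Ep)$ or $\Sigma_{s_0+1}(\Em)$: it has codimension one and lies in one of these sets, so that set must itself be the divisor. Its codimension is then computed from Lemma~\ref{lem:schubert}. This requires checking that the index range $\max(0,k+f-n)<j\le\min(k,f)$ in Lemma~\ref{lem:schubert} holds for $j=r_0+1$, $f=p$. The upper bound needs $r_0+1\le\min(k,p)$, which can fail. When it fails, $\Sigma_{r_0+1}(\Ep)$ is empty and contributes no divisor, so the classification is unaffected.
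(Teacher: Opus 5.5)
Your proof is correct and is essentially the paper's argument. The paper computes $Y_{\kappa(a,b)}=\Sigma_{a+1}(\Ep)\cup\Sigma_{b+1}(\Em)$ directly from Lemma~\ref{lem:closure} and reads off $\codim Y_\kappa\ge2$ from Lemma~\ref{lem:schubert}, whereas you first classify the $K$-stable prime divisors via the open orbit (exactly the argument of Corollary~\ref{cor:Kdivisors}) and then pass through Lemma~\ref{lem:divisors}; both routes conclude with Proposition~\ref{thm:codim} and Lemma~\ref{lem:connected}, and your worry about the index range is harmless, since for $1\le k\le n-1$ the bound $r_0+1\le\min(k,p)$ in fact always holds.
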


\begin{proof}
By Lemma~\ref{lem:closure},
\[
Y_{\kappa(a,b)}=\Sigma_{a+1}(\Ep)\cup\Sigma_{b+1}(\Em).
\]
Every $W$ satisfies $\dim W\cap\Ep\ge k-q$, so $a\ge k-q$, and the range condition of Lemma~\ref{lem:schubert} holds whenever $\Sigma_{a+1}(\Ep)\ne\emptyset$. Its codimension is $(a+1)(q-k+a+1)$, where $q-k+a+1\ge1$. This equals $1$ exactly when $a=0$ and $k=q$. Likewise $\codim\Sigma_{b+1}(\Em)=(b+1)(p-k+b+1)$ equals $1$ exactly when $b=0$ and $k=p$; in that case $\Sigma_1(\Em)\ne\emptyset$. Hence $\codim Y_\kappa\ge2$ unless $M$ is of holomorphic type. Stabilisers are connected by Lemma~\ref{lem:connected}, so Proposition~\ref{thm:codim} applies.
\end{proof}

\begin{corollary}\label{cor:Kdivisors}
The $K$-stable prime divisors of $\Gr_k(\C^{p+q})$ are $\Sigma_1(\Em)=\{W\cap\Em\ne0\}$ if $k=p$ and $\Sigma_1(\Ep)=\{W\cap\Ep\ne0\}$ if $k=q$; there are no others. A $K$-orbit $\kappa(a,b)$ is disjoint from one of them if and only if $M(a,b,c)$ is of holomorphic type. Thus Theorem~\ref{thm:vanishing} and Theorem~\ref{thmD}(ii),(iii) are the two cases of Theorem~\ref{thmC} for $\SU(p,q)$.
\end{corollary}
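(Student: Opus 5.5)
The plan is to classify the $K$-stable prime divisors directly from the orbit structure, and then read off disjointness from Lemma~\ref{lem:divisors}. By Proposition~\ref{prop:K} there are finitely many $K$-orbits. A $K$-stable prime divisor $D$ is therefore the closure $\overline{\kappa(r,s)}$ of an orbit of complex codimension one. By Lemma~\ref{lem:closure}, $\overline{\kappa(r,s)}=\{W:\dim W\cap\Ep\ge r,\ \dim W\cap\Em\ge s\}$. The open orbit is $\kappa(r_0,s_0)$ with $r_0=\max(0,k-q)$ and $s_0=\max(0,k-p)$, these being the generic intersection dimensions. An orbit closure is a proper divisor only if it is an irreducible component of $Y_{\kappa_0}$ for the open orbit, that is, of $\Sigma_{r_0+1}(\Ep)\cup\Sigma_{s_0+1}(\Em)$. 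Any orbit closure with $(r,s)\ne(r_0,s_0)$ lies in one of these two Schubert varieties, so a divisor must equal one of them.

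Next I would compute codimensions with Lemma~\ref{lem:schubert}. We have $\codim\Sigma_{r_0+1}(\Ep)=(r_0+1)(q-k+r_0+1)$. If $k\le q$, this is $q-k+1$, which equals $1$ iff $k=q$. If $k>q$, it is $(k-q+1)\cdot1\ge2$. Symmetrically, $\Sigma_{s_0+1}(\Em)$ is a divisor iff $k=p$, and then it is $\Sigma_1(\Em)$. Irreducibility comes from Lemma~\ref{lem:schubert}, and $K$-stability is clear. This gives the list $\Sigma_1(\Em)$ (when $k=p$) and $\Sigma_1(\Ep)$ (when $k=q$), with no others. This matches the computation already done in the proof of Theorem~\ref{thm:vanishing}. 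The one point to check is nonemptiness and properness when $k=p$: $\Sigma_1(\Em)$ is nonempty because $q\ge1$, and proper because $\Ep\notin\Sigma_1(\Em)$.

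For the disjointness claim, Lemma~\ref{lem:divisors} says $\kappa(a,b)$ either lies in $D$ or misses it. Now $\kappa(a,b)\cap\Sigma_1(\Em)=\emptyset$ iff $b=0$, and $\kappa(a,b)\cap\Sigma_1(\Ep)=\emptyset$ iff $a=0$. So $\kappa(a,b)$ misses one of the divisors iff ($k=p$ and $b=0$) or ($k=q$ and $a=0$). This is exactly Definition~\ref{def:holtype}. Combining with Theorem~\ref{thm:divisor}, connectedness of stabilisers (Lemma~\ref{lem:connected}) puts the non-holomorphic-type orbits in case (ii), which is Theorem~\ref{thm:vanishing}. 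The holomorphic-type orbits fall in case (i); there $Z\setminus\Sigma_1(\Em)=\cA$ is affine, and they are treated in Theorem~\ref{thmD}(ii),(iii).

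The main obstacle is the first step: making sure that no other $K$-orbit closure can be a divisor. I would handle it by the containment argument above, that every non-open orbit closure lies in $\Sigma_{r_0+1}(\Ep)\cup\Sigma_{s_0+1}(\Em)$. This uses only upper semicontinuity and the irreducibility of these Schubert varieties.
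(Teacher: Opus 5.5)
Your proof is correct and is essentially the paper's argument. A $K$-stable prime divisor must be a codimension-one component of $Y_{\kappa_0}=\Sigma_{r_0+1}(\Ep)\cup\Sigma_{s_0+1}(\Em)$, and Lemma~\ref{lem:schubert} shows this happens only for $\Sigma_1(\Ep)$ with $k=q$ and for $\Sigma_1(\Em)$ with $k=p$; disjointness is then read off from $\dim W\cap E_\pm$.

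Your direct identification of the open orbit as $\kappa(\max(0,k-q),\max(0,k-p))$ is slightly cleaner than the paper's route through the closed $G_0$-orbit. The paper's value $c_0=\min(p,q,k)$ is right only for $k\le\max(p,q)$; in general $c_0=\min(p,q,k,n-k)$. This does not affect the relevant cases $k\in\{p,q\}$.
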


\begin{proof}
A $K$-stable prime divisor does not contain the open orbit $\kappa_0$, so it is a codimension-one component of $Y_{\kappa_0}$. Let $(a_0,b_0,c_0)$ be the index of the closed $G_0$-orbit, which is dual to $\kappa_0$. By the computation above, the codimension-one components of $Y_{\kappa_0}=\Sigma_{a_0+1}(\Ep)\cup\Sigma_{b_0+1}(\Em)$ are $\Sigma_1(\Ep)$ when $a_0=0$ and $k=q$, and $\Sigma_1(\Em)$ when $b_0=0$ and $k=p$. The closed orbit has the maximal value $c_0=\min(p,q,k)$. If $k=q$, the constraint $a_0+c_0\le p$ forces $a_0=0$: either $c_0=q=k$, or $c_0=p$. Similarly, $b_0=0$ if $k=p$. The last statement follows from Lemma~\ref{lem:divisors} and the codimension computation in the proof of Theorem~\ref{thm:vanishing}.
\end{proof}

\subsection{Orbits of holomorphic type: the dictionary}

Assume $k=p$ and $b=0$, so $a=p-c$. In the chart $\cA=\{W\cap\Em=0\}$ we write $W=\Gamma_\Phi=\{v+\Phi v:v\in\Ep\}$ with $\Phi\in\Hom(\Ep,\Em)$. The group $K$ acts linearly by $(g_+,g_-)\cdot\Phi=g_-\Phi g_+^{-1}$. Let $\sigma_1\ge\dots\ge\sigma_p\ge0$ be the singular values of $\Phi$, so that $\|\Phi\|=\sigma_1$. Put
\[
R_c=\{\rank\Phi=c\},\qquad V_c=\overline{R_c}=\{\rank\Phi\le c\},\qquad K_c=\overline\cD\cap V_c=\{\Phi\in V_c:\|\Phi\|\le1\}.
\]

\begin{lemma}\label{lem:dictionary}
\begin{enumerate}[label=\textup{(\roman*)}]
\item $h(v+\Phi v)=\la(1-\Phi^*\Phi)v,v\ra$, so $\sig\Gamma_\Phi=(\#\{\sigma_i<1\},\#\{\sigma_i>1\},\#\{\sigma_i=1\})$, with all $p$ singular values counted.
\item $\Gamma_\Phi\cap\Ep=\ker\Phi$ and $\Gamma_\Phi\cap\Em=0$.
\item In the chart $\cA$:
\begin{itemize}
\item $O(p,0,0)=\cD=\{\|\Phi\|<1\}$;
\item $\kappa(p-c,0)=R_c$;
\item $M(p-c,0,c)=\{\Phi:\Phi^*\Phi\ \text{is an orthogonal projection of rank }c\}$, the manifold of rank-$c$ partial isometries;
\item $\Phi=0$, i.e.\ $W=\Ep$, is the base cycle of $\cD$.
\end{itemize}
\item The open $K$-orbit of $Z=\Gr_p(\C^n)$ is $R_{\min(p,q)}\subset\cA$, and $\kappa(a,b)\subset\overline{\kappa(r,s)}$ with $s=0$ holds exactly for $R_{c'}$ with $c'\ge c$.
\end{enumerate}
\end{lemma}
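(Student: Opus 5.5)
The plan is to verify the four parts in order, each by linear algebra in the chart $\cA$, with Theorem~\ref{thm:structure} and Lemma~\ref{lem:closure} supplying the orbit-level facts.

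\emph{(i) and (ii).} For (i), expand $h(v+\Phi v)$ for $v\in\Ep$. Since $\Ep\perp_h\Em$, $h|_{\Ep}=\la\cdot,\cdot\ra$ and $h|_{\Em}=-\la\cdot,\cdot\ra$, we get
\[
h(v+\Phi v)=\|v\|^2-\|\Phi v\|^2=\la(1-\Phi^*\Phi)v,v\ra .
\]
Thus, via the isomorphism $v\mapsto v+\Phi v$, the form $h|_{\Gamma_\Phi}$ is the Hermitian form of $1-\Phi^*\Phi$ on $\Ep$. Its eigenvalues are $1-\sigma_i^2$ over all $p$ singular values, and Sylvester's law gives the stated signature; the radical corresponds to the $1$-eigenspace of $\Phi^*\Phi$. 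For (ii), $v+\Phi v\in\Ep$ iff $\Phi v=0$, and $v+\Phi v\in\Em$ iff $v=0$.

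\emph{(iii).} Each item follows from (i), (ii) and Theorem~\ref{thm:structure}.
\begin{itemize}
\item $\sig=(p,0,0)$ iff all $\sigma_i<1$, i.e.\ $\|\Phi\|<1$. Every $W\in O(p,0,0)$ has $W\cap\Em=0$, since $\Em$ is negative definite, so $O(p,0,0)$ lies in $\cA$.
\item $\kappa(p-c,0)=\{\dim W\cap\Ep=p-c,\ W\cap\Em=0\}$, and by (ii) this is $\{\dim\ker\Phi=p-c\}=R_c$.
\item $M(p-c,0,c)=\kappa(p-c,0)\cap O(p-c,0,c)$ consists of the $\Phi$ of rank $c$ with exactly $c$ singular values equal to $1$ and $p-c$ smaller than $1$. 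Rank $c$ forces those $p-c$ values to be $0$. So $\Phi^*\Phi$ has spectrum in $\{0,1\}$ with multiplicity $c$ at $1$, i.e.\ it is an orthogonal projection of rank $c$, which is the definition of a rank-$c$ partial isometry. Conversely such $\Phi$ have rank $c$ and signature $(p-c,0,c)$.
\item $\Phi=0$ gives $\Gamma_0=\Ep$, which is $M(p,0,0)=\Gr_p(\Ep)$, the base cycle of $\cD$ by the Remark after Theorem~\ref{thm:structure}.
\end{itemize}

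\emph{(iv).} The open $K$-orbit is $\kappa(r,s)$ with $(r,s)$ minimal. For $k=p$ we have $s=0$ and $r=\max(0,p-q)$, giving $R_{\min(p,q)}$. Alternatively, $R_{\min(p,q)}$ is Zariski open in $\cA\cong\Hom(\Ep,\Em)$, which is open dense in $Z$. By Lemma~\ref{lem:closure}, $\kappa(p-c,0)\subset\overline{\kappa(r,0)}$ iff $r\le p-c$, i.e.\ $c'=p-r\ge c$, and the orbits $\kappa(r,0)$ are exactly $R_{p-r}$ by (iii).

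The only mildly delicate step is the rank bookkeeping in the third item of (iii), namely that the non-unit singular values must vanish. It is settled by comparing $\dim\ker\Phi=p-c$ with the signature count. Nothing else beyond Sylvester's law is needed.
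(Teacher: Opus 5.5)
Your proposal is correct and follows the paper's proof: (i)--(ii) by the direct expansion using $\Ep\perp_h\Em$, (iii) by the same singular-value count (you show the $p-c$ values below $1$ must vanish, while the paper equivalently shows the $c$ nonzero ones equal $1$), and the closure statement in (iv) from Lemma~\ref{lem:closure}, where $s\le b=0$ is automatic. The only variation is that you identify the open $K$-orbit as the componentwise-minimal $(r,s)$, or as the Zariski-open maximal-rank locus of $\cA$, whereas the paper reads it off via Matsuki duality from the closed $G_0$-orbit of maximal $c$; either argument works.
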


\begin{proof}
(i) and (ii) are direct, since $\Ep\perp_h\Em$. For (iii), $\sig\Gamma_\Phi=(p-c,0,c)$ and $\rank\Phi=c$ together force the $c$ nonzero singular values to equal $1$. For (iv), the open $K$-orbit is dual to the closed $G_0$-orbit, which has maximal $c$; and $s\le b=0$ in Lemma~\ref{lem:closure}.
\end{proof}

\begin{lemma}[Maximum principle on partial isometries]\label{lem:maxprinciple}
Let $P$ be a polynomial on $\Hom(\Ep,\Em)$ and $0\le\eta$. If $\Phi\in K_c$ and $\|\zeta\|\le\eta$, then
\[
|P(\Phi+\zeta)|\le\max\{|P(\Psi+\zeta)|:\Psi\in M\}.
\]
In particular $\max_{K_c}|P|=\max_M|P|$, and the polynomial hull of the closed $\eta$-neighbourhood $\overline{N_\eta(M)}$ contains $K_c+\overline{B_\eta}$.
\end{lemma}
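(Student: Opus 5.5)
Fix $\zeta$ with $\|\zeta\|\le\eta$ and put $Q(\Phi)=P(\Phi+\zeta)$, again a polynomial. The first claim then reduces to $\max_{K_c}|Q|\le\max_M|Q|$ for every polynomial $Q$, i.e.\ to $\max_{K_c}|P|=\max_M|P|$. The last assertion follows at once: every point $\Phi+\zeta$ with $\Phi\in K_c$ and $\|\zeta\|\le\eta$ satisfies $|P(\Phi+\zeta)|\le\max_{\Psi\in M}|P(\Psi+\zeta)|$, and $\Psi+\zeta$ lies in $\overline{N_\eta(M)}$. So the whole lemma rests on the maximum principle for polynomials on $K_c$.

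To prove it, write $\Phi\in K_c$ via the singular value decomposition. Choose orthonormal vectors $u_1,\dots,u_c\in\Ep$ and $v_1,\dots,v_c\in\Em$ with $\Phi=\sum_{j=1}^c s_j\,v_j\otimes u_j^*$ and $0\le s_j\le1$ (pad with $s_j=0$ if $\rank\Phi<c$). This uses $c\le\min(p,q)$, which holds since $M$ is non-empty. Now consider the map
\[
\Lambda:\overline{\Delta}^c\to\Hom(\Ep,\Em),\qquad (\lambda_1,\dots,\lambda_c)\mapsto\sum_{j=1}^c\lambda_j\,v_j\otimes u_j^*,
\]
which is affine linear. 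For $\lambda$ in the closed polydisc, $\Lambda(\lambda)$ has rank $\le c$ and norm $\max|\lambda_j|\le1$, so it lies in $K_c$. On the distinguished boundary $\mathbb T^c$ the singular values all equal $1$, so $\Lambda(\lambda)^*\Lambda(\lambda)$ is the orthogonal projection onto $\la u_1,\dots,u_c\ra$. By Lemma~\ref{lem:dictionary}(iii), $\Lambda(\mathbb T^c)\subset M$. Since $\Phi=\Lambda(s_1,\dots,s_c)$ and $P\circ\Lambda$ is a polynomial in $c$ variables, the maximum principle on the polydisc (iterated one-variable maximum modulus, or the Shilov boundary of $\overline\Delta^c$ being $\mathbb T^c$) gives $|P(\Phi)|\le\max_{\mathbb T^c}|P\circ\Lambda|\le\max_M|P|$.

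Hence $\max_{K_c}|P|\le\max_M|P|$, and the reverse inequality holds because $M\subset K_c$. I do not expect a real obstacle. The only point needing care is that the diagonal polydisc embedding stays inside $V_c$ and that its torus lands in rank exactly $c$ partial isometries. Both follow from orthonormality of the $u_j$ and $v_j$.
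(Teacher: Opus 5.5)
Your proposal is correct and follows the paper's own argument: the singular value decomposition of $\Phi$, the diagonal polydisc embedding whose torus lies in $M$, and the maximum principle on $\overline{\Delta}^c$. The only difference is cosmetic. You absorb $\zeta$ into the polynomial $Q=P(\cdot+\zeta)$ first, whereas the paper applies the polydisc maximum principle directly to $z\mapsto P(\iota(z)+\zeta)$.
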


\begin{proof}
Write $\Phi=\sum_{i=1}^c\sigma_i\,v_iu_i^*$ with orthonormal $(u_i)\subset\Ep$, $(v_i)\subset\Em$ and $\sigma_i\in[0,1]$. Let $\iota(z)=\sum_iz_i\,v_iu_i^*$ for $z\in\C^c$. Then $\iota(z)\in M$ for $|z_1|=\dots=|z_c|=1$. The polynomial $z\mapsto P(\iota(z)+\zeta)$ attains its maximum over the closed unit polydisc on the torus, and $\Phi=\iota(\sigma)$.
\end{proof}

\begin{lemma}\label{lem:polynomial}
Assume $c\ge1$ and $(c,p,q)\ne(p,p,p)$. Let $U\subset\cA$ be a connected $K_0$-invariant neighbourhood of $M$ and $f\in\cO(U)$ $K_0$-finite. Then $f$ is the restriction of a polynomial on $\Hom(\Ep,\Em)$.
\end{lemma}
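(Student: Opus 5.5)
The plan is to run the argument of Proposition~\ref{thm:codim} inside the chart $\cA\cong\Hom(\Ep,\Em)$. There the complement of $K\cdot U$ is too small to obstruct extension, so Riemann extension produces an entire function rather than a constant. A $K_0$-finite entire function on a vector space with linear $K$-action is then shown to be a polynomial.

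\emph{Step 1 (algebraisation on the open orbit).} By Lemma~\ref{lem:dictionary}(iv) the open $K$-orbit is $\kappa_0=R_{m}$ with $m=\min(p,q)$, a Zariski open subset of $\cA$. Its complement $V_{m-1}$ is a proper analytic subset, so $U\cap\kappa_0$ is non-empty, connected and dense in $U$. Stabilisers are connected by Lemma~\ref{lem:connected}. We apply Theorem~\ref{thm:alg} at points of $U\cap\kappa_0$ and glue by the identity theorem, exactly as in the proof of Proposition~\ref{thm:codim}. This gives $G\in\C[\kappa_0]$ with $G=f$ on $U\cap\kappa_0$. The finite-dimensional $K_0$-span $V$ of $f$ maps injectively and $K_0$-equivariantly into $\C[\kappa_0]$. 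Its image is $K$-stable because $K_0$ is Zariski dense in $K$. Hence, for each $k\in K$, $G$ agrees on $k^{-1}U\cap\kappa_0$ with a translate of an element of $V$. So $G$ extends holomorphically to $K\cdot U$.

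\emph{Step 2 (size of $K\cdot U$, and where the hypothesis enters).} By the argument of Proposition~\ref{thm:codim} and Lemma~\ref{lem:dictionary}(iv), $K\cdot U$ contains every orbit whose closure contains $\kappa(p-c,0)=R_c$. Within $\cA$ these are all $R_{c'}$ with $c'\ge c$, so $K\cdot U\supset\cA\setminus V_{c-1}$. By Lemma~\ref{lem:schubert}, or the classical determinantal codimension, $\codim_\C V_{c-1}=(p-c+1)(q-c+1)$. This equals $1$ only if $c=p=q$, which is exactly the excluded case. Indeed, $c\le\min(p,q)$, so $c=p$ forces $q\ge p$, and then $q-p+1\ge2$ unless $q=p$. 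For $c\ge1$ and $(c,p,q)\neq(p,p,p)$ the codimension is therefore at least $2$. Riemann extension then extends $G$ to an entire holomorphic function $\tilde G$ on $\Hom(\Ep,\Em)$, and $\tilde G=f$ on the dense connected set $U\cap\kappa_0$, hence on all of $U$.

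\emph{Step 3 (entire and $K$-finite implies polynomial).} The span of the $K$-translates of $\tilde G$ is finite-dimensional, since each translate is the extension of an element of the image of $V$. In particular this holds for the scalar torus $\{(1,\lambda):\lambda\in\C^*\}\subset K$, which acts by $\Phi\mapsto\lambda\Phi$. Decompose $\tilde G=\sum_j\tilde G_j$ into homogeneous Taylor components. The component $\tilde G_j$ is obtained from $\tilde G$ by averaging over $|\lambda|=1$ against $\lambda^{-j}$, so it lies in this finite-dimensional span. Distinct $j$ give linearly independent functions, so only finitely many $\tilde G_j$ are non-zero, and $\tilde G$ is a polynomial. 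This proves the lemma.

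I expect the main obstacle to be Step 1's extension to all of $K\cdot U$. One must check carefully that the local agreements glue to a single-valued function, using density of $\kappa_0$ and connectedness of overlaps with $\kappa_0$. One must also check that every $\Phi\in\cA\setminus V_{c-1}$ really lies in $K\cdot U$. The second check comes down to the closure relation $R_c\subset\overline{R_{c'}}$ for $c'\ge c$, which holds because these orbits are cones of matrices of rank at least $c$, so $R_{c'}$ meets every neighbourhood of every point of $M$.
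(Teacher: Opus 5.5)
Your proof is correct. Step 1 (algebraisation on $\kappa_0$ and extension of $G$ to $K\cdot U\supset\Hom(\Ep,\Em)\setminus V_{c-1}$) is exactly how the paper's proof begins. The two arguments part ways in how they turn $G$ into a polynomial.

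The paper splits into cases according to the complement of the \emph{open} orbit.
\begin{itemize}
\item If $p\ne q$, that complement has codimension $|p-q|+1\ge2$. So the regular function $G\in\C[\kappa_0]$ is already a polynomial by extension across codimension two in the normal variety $\Hom(\Ep,\Em)$. The extension to $K\cdot U$ is not even needed in this case.
\item If $p=q$ and $c<p$, it writes $G=P/\det^m$ in $\C[\GL_p]=\C[\Phi][\det^{-1}]$. The denominator disappears because $G$ is holomorphic at the points of $R_{p-1}\subset K\cdot U$, which are generic points of the irreducible hypersurface $\{\det=0\}$.
\end{itemize}

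You instead use the codimension $(p-c+1)(q-c+1)$ of $V_{c-1}$, which is $\ge2$ in every non-excluded case at once. Your citation of Lemma~\ref{lem:schubert} is apt here: $V_{c-1}=\Sigma_{p-c+1}(\Ep)\cap\cA$. You then extend analytically to an entire function $\tilde G$. Finally, finite-dimensionality of its translates under the scalar torus truncates its homogeneous Taylor expansion. This last step is sound, since finite-dimensional subspaces of $\cO(\Hom(\Ep,\Em))$ are closed, so the circle averages $\tilde G_j$ remain in the span.

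Your route is uniform and never needs the coordinate ring of $\kappa_0$. It also makes the role of the excluded case $(p,p,p)$ transparent: that is exactly where $V_{c-1}$ becomes a divisor, in line with Remark~\ref{rem:det}. The price is the extra analytic truncation step. The paper's route is immediate when $p\ne q$, but it needs the explicit description of $\C[\GL_p]$ and a separate divisor argument when $p=q$.
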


\begin{proof}
Let $\kappa_0=R_{\min(p,q)}$ be the open $K$-orbit. As in the proof of Proposition~\ref{thm:codim}, $f=G$ on $U\cap\kappa_0$ for some $G\in\C[\kappa_0]$, and $G$ extends holomorphically to $KU$. By Lemma~\ref{lem:dictionary}(iv), $KU\supset\bigcup_{c'\ge c}R_{c'}=\Hom(\Ep,\Em)\setminus V_{c-1}$.

If $p\ne q$, the complement of $\kappa_0$ in $\Hom(\Ep,\Em)$ has codimension $|p-q|+1\ge2$, so $G$ is a polynomial. If $p=q$ and $c<p$, then $G=P/\det^m$ with $P$ a polynomial and the fraction reduced. Since $G$ is holomorphic at the points of $R_{p-1}\subset KU$, which are generic points of the irreducible hypersurface $\{\det=0\}$, we get $m=0$. In both cases $f=G$ on $U$, because $U\cap\kappa_0$ is dense in $U$.
\end{proof}

\subsection{The hull}

\begin{theorem}[Theorem~\ref{thmD}(ii)]\label{thm:hull}
Let $k=p$, $b=0$, $c\ge1$ and $(c,p,q)\ne(p,p,p)$. Then $K_c=\overline{\cD}\cap V_c$ is a holomorphic hull of $M=M(p-c,0,c)$ in $Z$, and it is the polynomial hull of $M$ in $\cA$. Moreover, every function holomorphic near $M$ extends holomorphically to the open set $K_c+B_\rho\subset\cA$ for some $\rho>0$.
\end{theorem}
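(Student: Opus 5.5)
We will establish three things in turn: that $K_c$ is the polynomial hull of $M$ in $\cA$, the extension property (H1) with the quantitative neighbourhood $K_c+B_\rho$, and the maximality property (H2).

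\emph{Polynomial hull.} By Lemma~\ref{lem:maxprinciple}, $\max_{K_c}|P|=\max_M|P|$ for every polynomial $P$, so the polynomial hull of $M$ contains $K_c$. For the reverse inclusion, let $\Phi\notin K_c$. If $\|\Phi\|>1$, pick unit vectors $u,v$ with $|\langle\Phi u,v\rangle|>1$; the linear form $\Psi\mapsto\langle\Psi u,v\rangle$ is bounded by $1$ on $M$ (partial isometries have norm $1$) and separates $\Phi$. If $\rank\Phi>c$, some $(c+1)$-minor of $\Phi$ is non-zero, while all such minors vanish on $M\subset V_c$. In either case $\Phi$ is excluded.

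\emph{Extension (H1).} Let $f$ be holomorphic near $M$. Since $M\subset\cA$ is compact, we may shrink to a connected $K_0$-invariant neighbourhood $U=N_{2\eta}(M)\subset\cA$; this is possible because $K_0$ acts by unitaries on $\Hom(\Ep,\Em)$, so the metric tubes are invariant. Expand $f=\sum_\delta f_\delta$ as in Corollary~\ref{cor:fourier}. By Lemma~\ref{lem:polynomial}, each $f_\delta$ is a polynomial $P_\delta$, and $\sup_{\overline{N_\eta(M)}}|P_\delta|\le d_\delta^2\sup_{\overline{N_\eta(M)}}|f|$.

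By Lemma~\ref{lem:maxprinciple}, these bounds transfer to $K_c+\overline{B_\eta}$. The main obstacle is summability: the bound $d_\delta^2$ does not decay, so we need decay of $f_\delta$ in $\delta$. For this we exploit that $f$ is holomorphic on a larger tube. We would use the complexified action: for $\zeta$ in a small neighbourhood of $0$ in $i\mathfrak k_0$, the function $z\mapsto f(\exp(\zeta)z)$ is still defined on $N_\eta(M)$. Pairing this with the highest-weight decomposition gives a Cauchy-type estimate $\sup|f_\delta|\le C\,d_\delta^2 e^{-\epsilon|\lambda_\delta|}$, where $\lambda_\delta$ is the highest weight of $\delta$ (a Lassalle/Heinzner-type estimate). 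Since $d_\delta$ grows polynomially in $\lambda_\delta$, the series $\sum P_\delta$ then converges uniformly on $K_c+\overline{B_\rho}$ for some $\rho<\eta$. By Corollary~\ref{cor:fourier}(iii) its sum $F$ equals $f$ near $M$, which gives (H1) with $\Omega=K_c+B_\rho$.

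\emph{Maximality (H2).} Let $y\in Z\setminus K_c$, and apply the device after Definition~\ref{def:hull}. If $y\notin\cA$, then $y$ lies on the divisor $\Sigma_1(\Em)$. Choose an affine coordinate $\Phi_{ij}$ that has a pole along this divisor at $y$; it is holomorphic near $K_c$ and unbounded near $y$, with $\Omega'=\Omega\cap\cA$. If $y\in\cA\setminus K_c$, use the separating polynomial $P$ from the first step. If $|P(y)|>\max_{K_c}|P|$, set $F=P-P(y)$, which is non-vanishing on $K_c$ and vanishes at $y$. When $P$ is a minor with $P(y)\neq0=P|_{K_c}$, take $F=P$ instead and observe that $1/P$ is not available; so in the rank case we rather use $F=1+P$ scaled so that $F(y)=0$, i.e.\ $F=1-P/P(y)$, which equals $1$ on $K_c$. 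In both cases $1/F$ is a germ along $M$ that cannot be extended to any connected neighbourhood of $K_c\cup\{y\}$.
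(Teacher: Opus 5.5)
Your polynomial-hull step and your treatment of (H2) follow the paper's argument in substance. But (H1) has a genuine gap, exactly at the step you flag as the main obstacle.

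\textbf{The gap in (H1).} The bound $\sup|f_\delta|\le C\,d_\delta^2e^{-\epsilon|\lambda_\delta|}$ is asserted, not derived, and the mechanism you sketch does not yield it.
\begin{itemize}
\item For non-central $\zeta\in i\mathfrak k_0$, the biholomorphism $\exp(\zeta)$ does not commute with the isotypic projection. So $(f\circ\exp\zeta)_\delta\neq f_\delta\circ\exp\zeta$, and holomorphy of $f$ on a slightly moved set says nothing directly about $f_\delta$.
\item Suppose instead you split $f_\delta$ into weight components for a maximal torus $T$ and shift by $\exp(\zeta)$ with $\zeta\in i\mathfrak t$. This only damps components whose weight $\mu$ has large norm. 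A type with large highest weight usually also contains weights of small norm (for instance $0$), and those are not controlled.
\end{itemize}
So ``pairing with the highest-weight decomposition'' does not give decay of the full component $f_\delta$.

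\textbf{What closes it.} Use only the \emph{central} direction.
\begin{itemize}
\item The scalars $\{(1,t)\}\subset K$ act by $\Phi\mapsto t\Phi$ and commute with $K_0$. Hence the central circle acts on each type by a single character, and every $P_\delta$ is homogeneous, say of degree $e_\delta$.
\item Choose $\varepsilon,\eta>0$ so that $f$ is holomorphic on a connected $K_0$-invariant set containing $L=\overline{N_\eta((1+\varepsilon)M)}$. This is possible because $M$ is compact and $t\mapsto tM$ is continuous.
\item Corollary~\ref{cor:fourier}(i), together with Lemma~\ref{lem:maxprinciple} applied to $P_\delta((1+\varepsilon)\,\cdot\,)$, gives $\sup_{(1+\varepsilon)K_c+B_\eta}|P_\delta|\le d_\delta^2\sup_L|f|$.
\item Put $s=(1+\varepsilon/2)/(1+\varepsilon)<1$. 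Homogeneity then gives $\sup_{K_c+B_{s\eta}}|P_\delta|\le s^{e_\delta}d_\delta^2\sup_L|f|$.
\item Summability follows because the types with $e_\delta=e$ are constituents of the homogeneous polynomials of degree $e$. Hence $\sum_{e_\delta=e}d_\delta^2\le\binom{pq+e-1}{e}^2$, which grows only polynomially in $e$.
\end{itemize}
Alternatively, Lemma~\ref{lem:maxprinciple} transfers bounds from $\overline{N_\eta(M)}$ to $K_c+\overline{B_\eta}$ without loss. So any summable bound on $\overline{N_\eta(M)}$ would do. One example is the polynomial decay $\sup|f_\delta|\le c_\delta^{-m}d_\delta^2\sup|\mathrm{Cas}^mf|$ coming from the Casimir operator of $K_0$. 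But some such argument has to be supplied.

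\textbf{Two smaller points in (H2).}
\begin{itemize}
\item The detour about minors is unnecessary. $F=P-P(y)$ already handles the rank case, since there $\max_{K_c}|P|=0<|P(y)|$.
\item Take $y\notin\cA$ with $\dim(y\cap\Em)\ge2$. Then every Plücker coordinate giving an entry $\Phi_{ij}$ also vanishes at $y$, so no $\Phi_{ij}$ has a pole ``at $y$''. Each $\Phi_{ij}$ is still unbounded near $y$, because points of the irreducible divisor where its numerator is non-zero accumulate at $y$. It is cleaner to take $p_J/p_0$ with $p_J(y)\neq0$, as the paper does.
\end{itemize}
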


\begin{proof}
\emph{(H1).} Let $f$ be holomorphic on a neighbourhood $U_0$ of $M$. Since $M$ is compact, $\cA$ is open in $Z$, and $t\mapsto tM$ is continuous, there are $\varepsilon,\eta>0$ with $\bigcup_{1\le t\le1+\varepsilon}\overline{N_\eta(tM)}\subset U_0\cap\cA$. Let $U$ be the connected component, containing $M$, of the open $K_0$-invariant set $\{z:K_0z\subset U_0\cap\cA\}$. Then $U$ contains the connected $K_0$-invariant set $\bigcup_t\overline{N_\eta(tM)}$, and in particular
\[
L:=\overline{N_\eta((1+\varepsilon)M)}\subset U .
\]

By Lemma~\ref{lem:polynomial}, each $f_\delta$ is a polynomial $P_\delta$. The central circle $\{(1,e^{i\theta})\}\subset K_0$ acts on polynomials by $P\mapsto P(e^{-i\theta}\,\cdot)$, hence by a single character on each irreducible type. So $P_\delta$ is homogeneous, of some degree $e_\delta$.

By Corollary~\ref{cor:fourier}(i), and Lemma~\ref{lem:maxprinciple} applied to the polynomial $\Phi\mapsto P_\delta((1+\varepsilon)\Phi)$ and to $\zeta/(1+\varepsilon)$,
\[
\sup_{(1+\varepsilon)K_c+B_\eta}|P_\delta|\le\sup_L|P_\delta|\le d_\delta^2\,C,\qquad C:=\sup_L|f| .
\]
Put $s=(1+\varepsilon/2)/(1+\varepsilon)<1$. Since $K_c\subset tK_c$ for $t\ge1$, we have $K_c+B_{s\eta}\subset s\bigl((1+\varepsilon)K_c+B_\eta\bigr)$, and homogeneity gives
\[
\sup_{K_c+B_{s\eta}}|P_\delta|\le s^{e_\delta}d_\delta^2\,C .
\]
The types $\delta$ with $f_\delta\ne0$ and $e_\delta=e$ are constituents of the space of homogeneous polynomials of degree $e$ on $\Hom(\Ep,\Em)$. Hence $\sum_{e_\delta=e}d_\delta^2\le\binom{pq+e-1}{e}^2$, which grows polynomially in $e$. So $\sum_\delta P_\delta$ converges uniformly on the connected open set $K_c+B_{s\eta}$ to a holomorphic function $\tilde f$. By Corollary~\ref{cor:fourier}(iii), $\tilde f=f$ on $U\cap(K_c+B_{s\eta})$, which contains a neighbourhood of $M$.

\emph{(H2).} Let $y\in Z\setminus K_c$; we use the device described after Definition~\ref{def:hull}.
\begin{itemize}
\item If $y\in\cA\setminus V_c$, some $(c+1)$-minor $m$ has $m(y)\ne0$ and $m=0$ on $K_c$. Use $F=m-m(y)$, which is non-vanishing near $K_c$.
\item If $y\in V_c$ with $\|y\|>1$, choose unit vectors with $|\la yu,v\ra|=\|y\|$. Use $F=\la\Phi u,v\ra-\la yu,v\ra$; it is non-vanishing on a neighbourhood of $K_c$, where $|\la\Phi u,v\ra|<\|y\|$.
\item If $y\notin\cA$, write Plücker coordinates of the graph as minors of $(I\ \Phi)$. Then $p_0(y)=0$ for the coordinate $p_0=1$ on $\cA$, and $p_J(y)\ne0$ for some $J$. The function $F=p_J/p_0$ is a polynomial on $\cA$, holomorphic near $K_c$, and unbounded near $y$. Here $\Omega'=\Omega\cap\cA$ is the complement of the hypersurface $\{p_0=0\}$.
\end{itemize}
\end{proof}

\begin{proposition}[Theorem~\ref{thmD}(iii)]\label{prop:exceptional}
If $c=p=q$, then $M=\{\Phi\text{ unitary}\}\cong\U(p)$ is totally real, of real dimension $p^2=\dim_\C\cA$, and $M$ is its own holomorphic hull in $Z$.
\end{proposition}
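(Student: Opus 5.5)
By Lemma~\ref{lem:dictionary}(iii) with $c=p=q$, $M$ is the set of $\Phi$ with $\Phi^*\Phi=I$, i.e.\ $\U(p)\subset\Hom(\Ep,\Em)\cong\mathrm{M}_p(\C)$ after fixing orthonormal bases. It has real dimension $p^2=\dim_\C\cA$, and $T_\Phi M=\Phi\cdot\mathfrak u(p)$. Since $\mathfrak u(p)\cap i\,\mathfrak u(p)=0$, the submanifold $M$ is totally real; this is the maximally totally real real form of the affine $K$-orbit $R_p=\GL_p(\C)$. Condition (H1) is trivial for $\widehat M=M$, so everything reduces to (H2): for each $y\in Z\setminus M$ I will produce a germ along $M$ that cannot be continued to any connected open $\Omega\supset M\cup\{y\}$. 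I will use the device after Definition~\ref{def:hull}, in the same spirit as the proof of Proposition~\ref{prop:RPn}.

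\emph{Case $y\notin\cA$.} Take the Plücker ratio $F=p_J/p_0$ as in the proof of Theorem~\ref{thm:hull}. It is a polynomial on $\cA$, is unbounded near $y$, and we take $\Omega'=\Omega\cap\cA$.

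\emph{Case $y\in\cA$ with $\det y=0$.} Take $F=\det(\Phi)^{-1}$. It is holomorphic near $M$, where $|\det|=1$, and unbounded near $y$. Here $\Omega'=\Omega\setminus\{\det=0\}$ on $\cA$, together with $\{p_0=0\}$; this complement is a proper analytic subset. This is the point where the pole of $\det^{-1}$ through the centre of $\cD$ kills the polynomial hull $\overline\cD$, as anticipated in the introduction.

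\emph{Case $y\in\GL_p(\C)\setminus\U(p)$.} I will use the regular functions on $\kappa_0=\GL_p$ generated by the entries of $\Phi$ and $\Phi^{-1}$. The embedding $\Phi\mapsto(\Phi,\Phi^{-1})$ realises $\GL_p$ as a closed affine subvariety of $\mathrm{M}_p\times\mathrm{M}_p$. There $\U(p)$ is cut out by the real condition $\Phi^{-1}=\Phi^*$, so $M$ is the compact set of real points for the antiholomorphic involution $\Phi\mapsto(\Phi^*)^{-1}$. I copy the argument of Proposition~\ref{prop:RPn}:
\[
h(\Phi)=\exp\bigl(-\operatorname{tr}\bigl((\Phi-A)(\Phi^{-1}-A^{*})\bigr)\bigr),
\]
with $A$ chosen so that $|h|\le1$ on $M$ while $|h(y)|>1$. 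Alternatively, I will use the polar decomposition: write $y=u\,e^{X}$ with $u$ unitary and $X\neq0$ Hermitian. Some $K_0$-translate, or a Cartan-type function $\operatorname{tr}(\Phi\,B)+\operatorname{tr}(\Phi^{-1}B')$, then separates $y$ from $M$ in modulus. Finding an explicit rational function $R$, regular on $\GL_p$, with $\max_M|R|<|R(y)|$ is the main obstacle. The cleanest route is probably the standard fact that a compact group orbit $K_0$ in its complexification $K$ is $\C[K]$-convex (Gichev--Latypov, or directly via the Peter--Weyl density of matrix coefficients). I will then approximate $h$ by a polynomial $P$ in the entries of $\Phi,\Phi^{-1}$ with $|P(y)|>\max_M|P|$, and put $F=P-P(y)$. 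This $F$ is holomorphic and non-vanishing near $M$, vanishes at $y$, and is holomorphic off $\{\det=0\}\cup\{p_0=0\}$, so the device applies.
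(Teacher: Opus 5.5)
Your proposal is correct. For $y\notin\cA$ and for singular $y\in\cA$ it coincides with the paper's proof, which uses the Plücker ratio $p_J/p_0$ and the germ $\det(\Phi)^{-1}$. The routes differ only for $y\in\GL_p(\C)\setminus\U(p)$, and there the step you call the main obstacle is easy by either route.

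\textbf{Your route.} Your $\R\PP^n$-type function works with the explicit choice $A=\tfrac12\bigl(y+(y^*)^{-1}\bigr)$. This is the real part of $(y,y^{-1})$ for the involution $(\Phi,\Psi)\mapsto(\Psi^*,\Phi^*)$. Put $C=y-(y^*)^{-1}$, which is non-zero exactly because $y$ is not unitary. Then $(y-A)(y^{-1}-A^*)=-\tfrac14CC^*$, so $h(y)=\exp\bigl(\tfrac14\operatorname{tr}CC^*\bigr)>1$. On $\U(p)$ the exponent is $-\operatorname{tr}\bigl((\Phi-A)(\Phi-A)^*\bigr)\le0$. You should write this out, since as it stands the existence of $A$ is only asserted. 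The polynomial approximation is unnecessary: $h-h(y)$ is already holomorphic off $\{\det=0\}\cup\{p_0=0\}$, non-vanishing near $M$ and zero at $y$, which is all the device after Definition~\ref{def:hull} needs.

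\textbf{The paper's route.} The paper argues via singular values. A non-unitary invertible $y$ has $\|y\|>1$ or $\|y^{-1}\|>1$. In the first case one matrix coefficient $\la\Phi u,v\ra$ separates $y$ from $M$, and in the second $\la\Phi^{-1}v,u\ra$ does; both have modulus $\le1$ on $\U(p)$. This is your ``Cartan-type'' alternative carried out, with one of $B,B'$ of rank one and the other zero.

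\textbf{Comparison.} The paper's argument is shorter and produces separating functions of degree one in $\Phi$ or $\Phi^{-1}$. Yours is the general mechanism that the remark after Proposition~\ref{prop:RPn} points to: a compact real form of an affine $K$-orbit is convex with respect to the regular functions on that orbit. It would therefore transfer unchanged to other such orbits.
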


\begin{proof}
(H1) is trivial. For (H2), let $y\in\cA\setminus M$.
\begin{itemize}
\item If $y$ is singular, use $F=\det$. It does not vanish near $M$, so the germ $\det(\Phi)^{-1}$ along $M$ cannot be continued to $y$.
\item If $y$ is invertible and $\|y\|>1$, use $\la\Phi u,v\ra-\la yu,v\ra$ as before.
\item If $\|y^{-1}\|>1$, use $\la\Phi^{-1}v,u\ra-\la y^{-1}v,u\ra$ with $|\la y^{-1}v,u\ra|=\|y^{-1}\|$; it is holomorphic near $M$ and non-vanishing there, since $\|\Phi^{-1}\|=1$ on $M$.
\item If $y\notin\cA$, use the Plücker argument.
\end{itemize}
These cases are exhaustive: an invertible $y$ with $\|y\|\le1$ and $\|y^{-1}\|\le1$ has all singular values equal to $1$, so $y\in M$.
\end{proof}

The first case is the source of the discrepancy with the polynomial hull $\overline\cD$: in one variable, $z^{-1}$ is holomorphic near the unit circle but not at $0$, and $\det(\Phi)^{-1}$ plays the same role here. Remark~\ref{rem:det} explains this in detail.

\begin{remark}[Why $\det^{-1}$ breaks the polynomial picture]\label{rem:det}
For $c<\min(p,q)$, or $p\ne q$, the proof of Theorem~\ref{thm:hull} rests on Lemma~\ref{lem:polynomial}: every $K_0$-finite germ is a polynomial in $\Phi$. Write $V_{c'}$ for the matrices of rank $\le c'$. The reason the lemma holds is one of codimension. The orbit $\kappa=R_c$ is the set of matrices of rank exactly $c$, its closure is $V_c$, and its boundary $V_{c-1}$ has codimension
\[
(p-c+1)(q-c+1)-(p-c)(q-c)=p+q-2c+1
\]
in $V_c$. This is $\ge2$ unless $c=p=q$, so regular functions on $\kappa$ extend to the normal variety $V_c$ and are restrictions of polynomials (Lemma~\ref{lem:regpoly} gives a direct proof).

When $c=p=q$ the boundary is the divisor $\{\det\Phi=0\}$, and $\C[\kappa]=\C[\GL_p]=\C[\Phi][\det^{-1}]$. Already for $p=1$ the picture is familiar. Here $M$ is the unit circle and $\kappa=\C^*$. The function $z^{-1}$ is holomorphic near the circle but not at $0$, so the circle is its own hull in $\PP^1$, although its polynomial hull is the closed disc. In general $\det(\Phi)^{-1}$ is holomorphic near $\U(p)$ and singular along $\{\det=0\}$. This divisor meets $\overline\cD$ in the non-empty set of singular contractions, which contains the centre $0$. Hence no point of $\overline\cD\setminus\U(p)$ with $\det=0$ can be added to the hull. The functions $\la\Phi^{-1}v,u\ra$, which are again rational with poles on $\{\det=0\}$, exclude the remaining points. Polynomials alone cannot detect any of this, since for them $\max_{\U(p)}|P|=\max_{\overline\cD}|P|$ by Lemma~\ref{lem:maxprinciple}.
\end{remark}

\begin{remark}[Comparison with Kaup--Zaitsev]\label{rem:KZ}
$\Hom(\Ep,\Em)$ is the Cartan factor of type I, $K_0$ is its linear automorphism group, and $M$ is the $K_0$-orbit of a tripotent of rank $c$. For a tripotent $a$ of rank $c$ one has $\mu_j(z)\le\mu_j(a)$ for all $j$ iff $\sigma_1(z)\le1$ and $\rank z\le c$. So $K_c$ is the polynomial hull $Z(a)$ of \cite[Thm.~2.4]{KZ03}, and \cite[Thm.~2.5]{KZ03} gives continuous extension of continuous CR functions from $M$ to $K_c$. Theorem~\ref{thm:hull} adds two things: holomorphic extension of germs from neighbourhoods in the compact manifold $Z$ to a neighbourhood of $K_c$; and the identification of these orbits as precisely the Matsuki orbits of holomorphic type, together with the vanishing of Theorem~\ref{thm:vanishing} for all other Matsuki orbits. Theorem~\ref{thmF}(v) recovers the continuous extension from the representation-theoretic side, and identifies $CR^0(M)$ with $\cP(M)$.
\end{remark}

%=====================================================================
\section{Regular functions on \texorpdfstring{$K$}{K}-orbits}\label{sec:regular}
%=====================================================================

We use the following facts:
\begin{itemize}
\item regular functions on a normal variety extend across closed subsets of codimension $\ge2$;
\item the regular functions on the total space of a vector bundle $\mathcal E$ over $X$ are $\bigoplus_eH^0(X,S^e\mathcal E^*)$;
\item Cauchy's formula $S^e(V\otimes W)=\bigoplus_{|\lambda|=e}S_\lambda V\otimes S_\lambda W$ \cite[\S6]{FultonHarris}.
\end{itemize}

\begin{lemma}\label{lem:vanish}
Let $T\subset\underline V$ be a subbundle of a trivial bundle over a compact connected variety $X$ on which $\GL(V)$ acts compatibly. If $T_x\ne V$ for some $x$, then $H^0(X,S_\lambda T)=0$ for all partitions $\lambda\ne0$.
\end{lemma}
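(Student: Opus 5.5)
The plan is to embed $S_\lambda T$ into a trivial bundle, so that global sections become constant vectors, and then use irreducibility of $S_\lambda V$ under $\GL(V)$.

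\emph{Reduction to constant vectors.} Since we are in characteristic zero, the Schur functor $S_\lambda$ is exact on inclusions of vector spaces. Hence the inclusion $T\subset\underline V$ induces a subbundle $S_\lambda T\subset\underline{S_\lambda V}$, which is a direct summand fibrewise. A global section $s$ of $S_\lambda T$ is then a section of the trivial bundle $\underline{S_\lambda V}$ over the compact connected variety $X$. Its coordinates in a fixed basis are global regular (or holomorphic) functions on $X$, hence constant. So $s$ is a constant vector $w\in S_\lambda V$ with $w\in S_\lambda T_x$ for every $x\in X$. This identifies
\[
H^0(X,S_\lambda T)=\bigcap_{x\in X}S_\lambda T_x\subset S_\lambda V .
\]

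\emph{Invariance.} Compatibility of the actions means $gT_x=T_{gx}$ for $g\in\GL(V)$. Hence $g\,S_\lambda T_x=S_\lambda T_{gx}$, and the intersection above is a $\GL(V)$-stable subspace of $S_\lambda V$. For $\lambda\ne0$, either $S_\lambda V=0$, in which case we are done, or $S_\lambda V$ is an irreducible $\GL(V)$-module. In the latter case the intersection is either $0$ or all of $S_\lambda V$.

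\emph{Excluding the second alternative.} Suppose the intersection is all of $S_\lambda V\ne0$. Then $S_\lambda V\subset S_\lambda T_x$ for all $x$, so $S_\lambda T_x=S_\lambda V$. Since $T$ is a subbundle, its rank is constant. The hypothesis $T_x\ne V$ at one point therefore gives $\dim T_x<\dim V$ at every point. For a nonzero partition $\lambda$ and a proper subspace $T_x\subsetneq V$, the inclusion $S_\lambda T_x\subsetneq S_\lambda V$ is strict whenever $S_\lambda V\ne0$. To see this, choose a basis of $V$ extending one of $T_x$. Then $S_\lambda V$ contains a weight vector involving a basis vector outside $T_x$, for example the highest weight vector for an ordering putting such a vector first. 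That weight does not occur in $S_\lambda T_x$. This contradiction shows $H^0(X,S_\lambda T)=0$.

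The only delicate point is this strictness step, namely the weight argument. I expect it to be routine once a torus adapted to $T_x\subset V$ is chosen.
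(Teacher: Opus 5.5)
Your proof is correct and is essentially the paper's argument: a global section is a constant vector lying in $\bigcap_x S_\lambda T_x$, and this is a proper $\GL(V)$-submodule of the irreducible module $S_\lambda V$ (or of $0$), hence zero. You also justify properness by a weight argument, which the paper leaves implicit; the constant-rank remark is unnecessary, since $\bigcap_x S_\lambda T_x\subset S_\lambda T_{x_0}\subsetneq S_\lambda V$ at the single given point $x_0$ already suffices.
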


\begin{proof}
A section is a holomorphic map to the trivial bundle $S_\lambda V$, hence constant, with value in $\bigcap_xS_\lambda T_x$. This is a proper $\GL(V)$-submodule of the irreducible module $S_\lambda V$ (or of $0$), hence zero.
\end{proof}

\begin{proof}[Proof of Theorem~\ref{thmE}]
With the notation of Proposition~\ref{prop:K}, regular functions on $\kappa=\kappa(r,s)$ restrict to regular functions on the fibres $F$.

\emph{Case $d<m_1$, $d<m_2$.} Then $F\subset F'=\{S\cap V_2=0\}$, which is the total space of $\Hom(T,V_2)$ over $\Gr_d(V_1)$, with $T$ tautological. Its complement in $F'$ (the non-injective maps) has codimension $m_2-d+1\ge2$. So
\[
\C[F]=\bigoplus_\lambda H^0\bigl(\Gr_d(V_1),S_\lambda T\bigr)\otimes S_\lambda V_2^*=\C,
\]
by Lemma~\ref{lem:vanish}. Every $f\in\C[\kappa]$ is therefore constant on fibres, hence a holomorphic function on the compact connected base $B$, hence constant.

\emph{Case $d=m_1<m_2$.} Here $\kappa$ is the complement of a subset of codimension $\ge2$ in the total space of $\mathcal H=\Hom(Q_+,Q_-)$ over $B$, where $Q_+=\Ep/W_+$ and $Q_-=\Em/W_-$. Hence
\[
\C[\kappa]=\bigoplus_\lambda H^0\bigl(\Gr_r(\Ep),S_\lambda Q_+\bigr)\otimes H^0\bigl(\Gr_s(\Em),S_\lambda Q_-^*\bigr).
\]
If $s\ge1$, Lemma~\ref{lem:vanish} with $Q_-^*=W_-^0\subset\Em^*$ kills every $\lambda\ne0$, so $\C[\kappa]=\C$. If $s=0$, the second factor is $S_\lambda\Em^*$, and the first is nonzero for $\ell(\lambda)\le d$: the surjection $\Ep\otimes\cO\to Q_+$ gives a nonzero map $S_\lambda\Ep\to H^0(S_\lambda Q_+)$. So $\C[\kappa]$ is infinite-dimensional. Note that $s=0$ and $d=p-r$ mean exactly $s=0$ and $k=p$.

\emph{Case $d=m_2<m_1$.} Symmetric: $\C[\kappa]\ne\C$ iff $r=0$, i.e.\ iff $r=0$ and $k=q$.

\emph{Case $d=m_1=m_2$.} The fibre is $\mathrm{Iso}(V_1,V_2)$, and by the algebraic Peter--Weyl theorem
\[
\C[\kappa]=\bigoplus_{\lambda\in\mathbb Z^d\ \text{dominant}}H^0(\Gr_r,S_\lambda Q_+)\otimes H^0(\Gr_s,S_\lambda Q_-^*).
\]
By the Borel--Weil theorem on Grassmannians \cite[Ch.~4]{Weyman}, $H^0(\Gr_r,S_\lambda Q_+)=0$ if $r\ge1$ and $\lambda_d<0$, and $H^0(\Gr_s,S_\lambda Q_-^*)=0$ if $s\ge1$ and $\lambda_1>0$. So if $r,s\ge1$, only $\lambda=0$ survives. If $s=0$ (respectively $r=0$), all partitions (respectively all antidominant $\lambda$) contribute.

Finally, $d\ge1$ and $s=0$, $k=p$ mean $\kappa(r,0)=R_{p-r}$ with $r<p$, a rank stratum of $\cA$.
\end{proof}

\begin{corollary}\label{cor:kappa-germs}
If $c\ge1$ and $M$ is not of holomorphic type, then every function holomorphic on a neighbourhood of $M$ in $\kappa$ is constant.
\end{corollary}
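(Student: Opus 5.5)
The plan is to run the Fourier-expansion argument of Proposition~\ref{thm:codim} intrinsically on $\kappa=\kappa(a,b)$, with Theorem~\ref{thmE} in place of the codimension estimate. Let $f$ be holomorphic on a connected neighbourhood $U$ of $M$ in $\kappa$. Shrinking $U$ to a connected $K_0$-invariant neighbourhood of $M$ in $\kappa$ (possible since $M$ is a compact $K_0$-orbit), it suffices to show constancy there; constancy on the original $U$ then follows from the identity theorem, since $\kappa$ is a connected complex manifold. I will use the decomposition $f=\sum_\delta f_\delta$ of Corollary~\ref{cor:fourier}, applied with $\kappa$ as the ambient $K_0$-manifold. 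Its proof uses only the Peter--Weyl theorem and the holomorphic $K_0$-action, so it holds verbatim in $\kappa$.

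Each $f_\delta$ is $K_0$-finite and holomorphic on the $K_0$-invariant open set $U\subset\kappa$. Fix $y\in M$. The stabiliser $K_y$ is connected by Lemma~\ref{lem:connected}, and $Ky=\kappa$. Theorem~\ref{thm:alg} therefore gives a unique $F_\delta\in\C[\kappa]$ with $F_\delta=f_\delta$ near $K_0y=M$ in $\kappa$. Theorem~\ref{thm:alg} is stated for a neighbourhood $U$ in $Z$, but its proof only uses functions on a $K_0$-invariant open set of the $K$-space $Ky$. Concretely, the relevant set is the component $\Omega_0$ of $\{k:ky\in U\}$, together with the $K_0$-span of $f_\delta$. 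So it applies directly to $U\subset\kappa$, and I would note this explicitly.

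Next, since $c\ge1$ we have $d=k-a-b=c\ge1$, and $M$ is not of holomorphic type. Theorem~\ref{thmE} then gives $\C[\kappa(a,b)]=\C$, so each $F_\delta$ is constant. Consequently $f_\delta$ is constant near $M$, hence on the connected set $U$. A constant function is $K_0$-invariant, so $f_\delta=0$ for $\delta\ne1$. Corollary~\ref{cor:fourier}(ii) then yields $f=f_1$, which is constant.

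The only delicate point is justifying that Theorem~\ref{thm:alg} and Corollary~\ref{cor:fourier} apply inside $\kappa$ rather than in $Z$. For Corollary~\ref{cor:fourier}, the integral defining $f_\delta$ stays holomorphic on the $K_0$-invariant set $U$, so it goes through. For Theorem~\ref{thm:alg}, the agreement of $k\mapsto f_\delta(ky)$ and $k\mapsto F_\delta(ky)$ on the totally real $K_0\subset\Omega_0$ gives $F_\delta=f_\delta$ on $\Omega_0y$, an open neighbourhood of $M$ in $\kappa$. Neither step uses the ambient $Z$.
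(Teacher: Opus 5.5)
Your proposal is correct and is essentially the paper's own argument: you decompose $f$ into $K_0$-types on a connected $K_0$-invariant neighbourhood of $M$ in $\kappa$, run the proof of Theorem~\ref{thm:alg} on the $K$-manifold $\kappa$ (with stabilisers connected by Lemma~\ref{lem:connected}) to match each $f_\delta$ near $M$ with an element of $\C[\kappa(a,b)]=\C$ from Theorem~\ref{thmE}, and conclude that $f_\delta=0$ for $\delta\ne1$ and that $f=f_1$ is constant. Your explicit check that neither Theorem~\ref{thm:alg} nor Corollary~\ref{cor:fourier} uses the ambient $Z$ is exactly what the paper compresses into the word ``verbatim''; one small slip is that the final identity-theorem step needs connectedness of the original neighbourhood, not of $\kappa$, but you had already assumed that.
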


\begin{proof}
By Theorem~\ref{thmE}, $\C[\kappa(a,b)]=\C$. The proofs of Theorem~\ref{thm:alg} and Corollary~\ref{cor:fourier} apply verbatim with $Z$ replaced by the $K$-manifold $\kappa$. Applying them to a connected $K_0$-invariant neighbourhood of $M$ in $\kappa$: for $\delta\ne1$, $f_\delta$ agrees near $M$ with an element of $\C[\kappa]_\delta=0$, so $f_\delta=0$; and $f=f_1$ agrees with a constant.
\end{proof}

%=====================================================================
\section{Global CR functions and Dolbeault cohomology in degree zero}\label{sec:CR}
%=====================================================================

Throughout this section $M=M(a,b,c)$ with $c\ge1$ and $\kappa=\kappa(a,b)$. By Theorem~\ref{thm:structure}(iii), $M$ is a compact, real-analytic, generic CR submanifold of the complex manifold $\kappa$, and $K_0$ acts on $M$ by CR automorphisms. In particular $M$ is a uniqueness set: a holomorphic function on a connected neighbourhood of $M$ in $\kappa$ that vanishes on $M$ vanishes identically. Theorem~\ref{thm:alg} and Corollary~\ref{cor:fourier} hold, with the same proofs, for the $K$-manifold $\kappa$ in place of $Z$; the stabilisers are connected by Lemma~\ref{lem:connected}.

\begin{lemma}\label{lem:CRextension}
Every $K_0$-finite $\varphi\in CR^\infty(M)$ is the restriction to $M$ of some $F\in\C[\kappa]$.
\end{lemma}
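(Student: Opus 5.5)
The plan is to mimic the proof of Theorem~\ref{thm:alg}, but to work directly with functions on the homogeneous space $M=K_0/L_0$ rather than with holomorphic functions near a point. Let $\varphi\in CR^\infty(M)$ be $K_0$-finite, and let $V\subset CR^\infty(M)$ be the finite-dimensional span of its $K_0$-translates. This span is $K_0$-stable because $K_0$ acts by CR automorphisms. As in Theorem~\ref{thm:alg}, the representation $\rho$ of $K_0$ on $V$ extends uniquely to a rational representation of $K$. Fix a base point $y\in M$ and define
\[
F(ky)=(\rho(k^{-1})\varphi)(y),\qquad k\in K.
\]
For $k\in K_0$ this equals $\varphi(ky)$, so $F|_M=\varphi$ once $F$ is known to be well defined. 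Regularity of $F$ is then automatic, since it is a matrix coefficient of a rational representation.

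The key step is well-definedness. We must show $\mathrm{ev}_y\circ\rho(h)=\mathrm{ev}_y$ on $V$ for all $h\in K_y$. Because $K_y$ is connected (Lemma~\ref{lem:connected}), it suffices to prove $\mathrm{ev}_y\circ d\rho(\zeta)=0$ for $\zeta\in\mathfrak k_y$. Write $\zeta=X+iY$ with $X,Y\in\mathfrak k_0$; then $\xi_X(y)+J\xi_Y(y)=0$ in $T_y\kappa$. For $g\in V$ we have
\[
d\rho(\zeta)g=-(\xi_Xg+i\,\xi_Yg),
\]
which is the action of the real vector fields $\xi_X,\xi_Y$ tangent to $M$, extended complex-linearly in $\zeta$. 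At $y$ the pair $(\xi_X(y),\xi_Y(y))$ satisfies $\xi_X(y)=-J\xi_Y(y)$. Since $\xi_X(y)\in T_yM$, the vector $\xi_Y(y)$ lies in $T_yM\cap J\,T_yM=H_yM$. The CR condition for $g$ says $(Jv)g=i\,vg$ for $v\in H_yM$. Hence
\[
\xi_X(y)g=-(J\xi_Y(y))g=-i\,\xi_Y(y)g,
\]
and therefore $(d\rho(\zeta)g)(y)=0$. This uses only that smooth CR functions are annihilated by $\bar\partial_b$ pointwise, so it holds for $CR^\infty$. The differential equation argument from Theorem~\ref{thm:alg} then gives invariance under $\exp\mathfrak k_y$, hence under $K_y=K_y^0$.

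Finally I will check that the resulting $F$ is a well-defined function on $\kappa=Ky$ and restricts to $\varphi$ on $M=K_0y$, both immediate from the construction. Uniqueness follows because $M$ is a uniqueness set for holomorphic functions on $\kappa$, being generic (Theorem~\ref{thm:structure}(iii)). The main obstacle is the pointwise CR computation above: one must make sure that $\xi_Y(y)$ really lies in $H_yM$, and that the sign conventions for $J$ on vector fields match those in Theorem~\ref{thm:alg}. The identity $\xi_X(y)=-J\xi_Y(y)$ with $\xi_X(y)\in T_yM$ settles the first point. The remainder is a repetition of the argument in Theorem~\ref{thm:alg}.
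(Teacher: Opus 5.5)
Your proof is correct, but it takes a different route from the paper's.

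The paper first observes that the translates of $\varphi$ are matrix coefficients on $M=K_0/L_0$, hence real-analytic. It extends each of them holomorphically to a neighbourhood of $M$ in $\kappa$, using the extension theorem for real-analytic CR functions on generic real-analytic submanifolds. It then checks, via the uniqueness property of $M$, that the extension is again $K_0$-finite on a $K_0$-invariant neighbourhood. Finally it applies Theorem~\ref{thm:alg} in $\kappa$ as a black box.

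You bypass the holomorphic extension and rerun the argument of Theorem~\ref{thm:alg} directly on $M$. Your key observation is that for $\zeta=X+iY\in\mathfrak k_y$ the complex tangent vector $\xi_X(y)+i\xi_Y(y)=i\bigl(\xi_Y(y)+iJ\xi_Y(y)\bigr)$ lies in $T^{0,1}_yM$, because $\xi_Y(y)\in H_yM$. Hence $\mathrm{ev}_y\circ d\rho(\zeta)=0$ on $V$ follows from the tangential Cauchy--Riemann equations at the single point $y$. The ODE step, connectedness of $K_y$ (Lemma~\ref{lem:connected}) and the matrix-coefficient definition of $F$ then go through exactly as in Theorem~\ref{thm:alg}. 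Your sign conventions are consistent: the orbit map is holomorphic, so $\xi_{iY}=J\xi_Y$, and CR functions satisfy $(Jv)g=i\,vg$ on $HM$.

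What each route buys:
\begin{itemize}
\item Your version is more self-contained. It needs no real-analyticity, no external extension theorem, and no identity-theorem bookkeeping to show the extended functions are $K_0$-finite. It also proves the lemma for $K_0$-finite CR functions of class $C^1$.
\item The paper's version is shorter once Theorem~\ref{thm:alg} is available, and it makes explicit that the CR case reduces to the holomorphic one. The holomorphic extension near $M$ that it produces along the way is equally implied by your regular $F$.
\end{itemize}
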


\begin{proof}
The span $V_\varphi$ of the $K_0$-translates of $\varphi$ is finite-dimensional. It consists of matrix coefficients on $M=K_0/L_0$, hence of real-analytic CR functions. A real-analytic CR function on a real-analytic generic submanifold extends holomorphically to a neighbourhood, uniquely because $M$ is a uniqueness set \cite[\S1.7]{BER99}.

Extend a basis $\varphi_1,\dots,\varphi_N$ of $V_\varphi$ to holomorphic functions $\Phi_i$ on a common neighbourhood $W$ of $M$ in $\kappa$. Let $U$ be the connected component, containing $M$, of $\{z:K_0z\subset W\}$. For $k\in K_0$, the functions $\Phi_i\circ k^{-1}$ and the extension of $\varphi_i\circ k^{-1}$ are holomorphic on $U$ and agree on $M$, hence agree on $U$. So the extension $\Phi$ of $\varphi$ is a $K_0$-finite holomorphic function on $U$. By Theorem~\ref{thm:alg}, applied in $\kappa$ at a point of $M$, there is $F\in\C[\kappa]$ with $F=\Phi$ near $M$, hence $F|_M=\varphi$.
\end{proof}

\begin{lemma}[Approximation]\label{lem:approx}
Every $\varphi\in CR^0(M)$ is a uniform limit on $M$ of $K_0$-finite elements of $CR^\infty(M)$.
\end{lemma}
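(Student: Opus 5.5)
The plan is to average $\varphi$ against an approximate identity on the compact group $K_0$, and then truncate its Peter--Weyl expansion. Let $\varphi\in CR^0(M)$, and regard CR in the distributional sense, as in Theorem~\ref{thmF}. For $k\in K_0$ put $(k\cdot\varphi)(z)=\varphi(k^{-1}z)$. Since $K_0$ acts on $M$ by CR automorphisms (Theorem~\ref{thm:structure}), each translate $k\cdot\varphi$ is again in $CR^0(M)$. Continuity of $\varphi$ on the compact homogeneous space $M=K_0/L_0$ gives uniform continuity of $k\mapsto k\cdot\varphi$ in the sup norm.

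\emph{Step 1: smoothing.} Choose a smooth approximate identity $\psi_j\ge0$ on $K_0$, with $\int\psi_j=1$ and support shrinking to $e$. Set $\varphi_j=\int_{K_0}\psi_j(k)\,(k\cdot\varphi)\,dk$.
\begin{itemize}
\item Uniform continuity gives $\varphi_j\to\varphi$ uniformly.
\item Each $\varphi_j$ is smooth, because $K_0$ acts transitively on $M$: derivatives along fundamental vector fields fall on $\psi_j$, and these fields span $TM$.
\item Each $\varphi_j$ is CR. For a test form $\omega$, pairing $\bar\partial_b\varphi_j$ with $\omega$ equals $\int\psi_j(k)\langle\bar\partial_b(k\cdot\varphi),\omega\rangle\,dk=0$, since $\bar\partial_b$ commutes with CR automorphisms.
\end{itemize}
So $\varphi_j\in CR^\infty(M)$.

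\emph{Step 2: finite truncation.} For $g=\varphi_j$, consider the isotypic components $g_\delta=d_\delta\int\overline{\chi_\delta(k)}\,(k\cdot g)\,dk$. These are $K_0$-finite, and each is CR and smooth by the same averaging argument. Because $g$ is smooth on the homogeneous space $M$, its Peter--Weyl series converges uniformly. One way to see this is to apply powers of $1+\Omega_{K_0}$, the Casimir acting as a second-order elliptic operator on $M$ (or on $K_0$ via $k\mapsto g(k\cdot o)$). This gives rapid decay of $\|g_\delta\|_{L^2}$ in the highest weight of $\delta$. The bounds $\|g_\delta\|_\infty\le d_\delta\|g_\delta\|_{L^2}$ up to constants, together with polynomial growth of $d_\delta$ and of the number of types, then give absolute uniform convergence. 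Finite partial sums of $\sum_\delta g_\delta$ are therefore $K_0$-finite elements of $CR^\infty(M)$ approximating $\varphi_j$. A diagonal argument finishes the proof.

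\emph{Main obstacle.} The only delicate point is that the CR property survives averaging in the distributional sense for merely continuous $\varphi$. I would handle it by writing $\bar\partial_b$-closedness as vanishing against $\bar L$-derivatives of test functions, where $L$ runs over CR vector fields. Translates by $k$ carry CR vector fields to CR vector fields, so the pairing identity is Fubini applied to a continuous integrand. An alternative that avoids the smoothing step is to take the Fejér-type means $\sum_\delta c_{\delta,j}\varphi_\delta$ from convolution with a central approximate identity. These converge uniformly directly and are finite sums once the kernels are chosen $K_0$-finite, for example via Peter--Weyl approximation of $\psi_j$ by central trigonometric polynomials.
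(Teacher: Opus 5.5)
Your proof is correct, but your main route differs from the paper's. The paper does not smooth first and then expand. It takes a continuous approximate identity $\psi$ with $\|\psi*\varphi-\varphi\|_\infty<\varepsilon$, and uses Peter--Weyl density to pick a $K_0$-finite $\psi'$ with $\|\psi-\psi'\|_{L^1}\|\varphi\|_\infty<\varepsilon$. Then $\psi'*\varphi$ is already $K_0$-finite, because a left translate of $\psi'*\varphi$ is the convolution of the left translate of $\psi'$ with $\varphi$. It is CR by averaging, and real-analytic as a finite sum of matrix coefficients. The elementary estimate $\|(\psi-\psi')*\varphi\|_\infty\le\|\psi-\psi'\|_{L^1}\|\varphi\|_\infty$ then finishes the proof. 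This is essentially the ``alternative'' you sketch at the end; centrality of the kernel is not needed there.

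Your main argument instead relies on the classical analytic fact that the Peter--Weyl series of a smooth function on $K_0/L_0$ converges absolutely and uniformly. That requires three ingredients:
\begin{itemize}
\item Casimir decay of $\|g_\delta\|_{L^2}$;
\item the bound $\|g_\delta\|_\infty\le d_\delta\|g_\delta\|_{L^2}$;
\item polynomial growth of $d_\delta$ and of the number of types.
\end{itemize}
One caution: $K_0=\U(p)\times\U(q)$ has a centre. So the Casimir must be taken with respect to an $\mathrm{Ad}$-invariant inner product on all of $\mathfrak k_0$, centre included. Only then is it elliptic and does it control the central characters.

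What your route buys is a stronger statement: every smooth CR function on $M$ is the uniform sum of its own CR isotypic components. The lemma only needs some $K_0$-finite approximants, though. The paper's device of approximating the kernel rather than the function gets these directly for merely continuous $\varphi$, with no smoothing step and no convergence estimate.
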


\begin{proof}
Let $\varepsilon>0$. Choose a continuous approximate identity $\psi\ge0$ on $K_0$ with $\|\psi*\varphi-\varphi\|_\infty<\varepsilon$, where $(\psi*\varphi)(y)=\int_{K_0}\psi(k)\varphi(k^{-1}y)\,dk$, and, by the Peter--Weyl theorem, a $K_0$-finite function $\psi'$ with $\|\psi-\psi'\|_{L^1}\|\varphi\|_\infty<\varepsilon$. Then $\|\psi'*\varphi-\varphi\|_\infty<2\varepsilon$.

The function $\psi'*\varphi$ is $K_0$-finite, because $\varphi\mapsto\psi'*\varphi$ intertwines left translations with the left translates of $\psi'$, which span a finite-dimensional space. It is CR, because each translate $\varphi\circ k^{-1}$ is a CR distribution ($k$ is a biholomorphism of $Z$ preserving $M$), and averaging preserves the distributional CR equations. Being $K_0$-finite, it is a finite sum of matrix coefficients on $M=K_0/L_0$, hence real-analytic, so it lies in $CR^\infty(M)$.
\end{proof}

\begin{lemma}\label{lem:regpoly}
Let $k=p$, $b=0$, $c\ge1$ and $(c,p,q)\ne(p,p,p)$, so that $\kappa=R_c\subset\cA$. Every $F\in\C[R_c]$ is the restriction of a polynomial on $\Hom(\Ep,\Em)$.
\end{lemma}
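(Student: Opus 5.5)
The plan is to show that every regular function on the rank stratum $R_c=\{\rank\Phi=c\}$ extends to a regular function on its closure $V_c=\{\rank\Phi\le c\}$, and then that every regular function on the affine variety $V_c$ is the restriction of a polynomial. The second step is immediate, because $V_c$ is a closed subvariety of the affine space $\Hom(\Ep,\Em)$. Its coordinate ring is therefore a quotient of the polynomial ring, and every element of $\C[V_c]$ lifts to a polynomial. So the whole content lies in the extension $\C[R_c]=\C[V_c]$.

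For the extension I would use two facts.

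First, $V_c$ is a normal variety. This is the classical normality of determinantal varieties: the generic determinantal ideal of $(c+1)$-minors is prime and $V_c$ is normal (Eagon--Hochster, or the Kempf desingularisation argument).

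Second, the boundary $V_c\setminus R_c=V_{c-1}$ has codimension at least $2$ in $V_c$. By the computation in Remark~\ref{rem:det}, this codimension is $p+q-2c+1$. Since $c\le\min(p,q)$, we have $p+q-2c\ge0$, with equality only if $c=p=q$. That case is excluded, so the codimension is at least $2$.

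With these two facts, algebraic Hartogs extension for normal varieties (the first bullet at the start of Section~\ref{sec:regular}) gives that every $F\in\C[R_c]$ extends to $V_c$.

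If I want to avoid quoting normality, there is an alternative route that stays closer to Section~\ref{sec:regular}. It uses the Kempf-type resolution $\widetilde V_c=\{(\Phi,U):U\in\Gr_{p-c}(\Ep),\ U\subset\ker\Phi\}$, which is the total space of the vector bundle $\Hom(\Ep/U,\Em)$ over $\Gr_{p-c}(\Ep)$. By the decomposition used in the case $d=m_1$ of the proof of Theorem~\ref{thmE} (Cauchy's formula), its ring of regular functions is
\[
\bigoplus_\lambda H^0\bigl(\Gr_{p-c}(\Ep),S_\lambda Q_+\bigr)\otimes S_\lambda\Em^*,
\]
where $\lambda$ runs over partitions with at most $c$ parts. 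Each $H^0(S_\lambda Q_+)$ equals $S_\lambda\Ep$ by Borel--Weil. These summands are exactly the pieces of the polynomial ring on $\Hom(\Ep,\Em)$ whose partitions have at most $c$ rows, and that is precisely the image of the polynomial ring in functions on $V_c$.

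The step I expect to be the main obstacle is the passage from $R_c$ to the resolution: I must check that the complement of $R_c$ in $\widetilde V_c$ has codimension at least $2$. This complement is the locus where $\rank\Phi<c$ as a map $\Ep/U\to\Em$, that is, where $\Phi:\C^c\to\C^{q-a}$ is non-injective. Its codimension is $q-c+1$, which is at least $2$ precisely when $c<q$. The remaining case $c=q<p$ needs the symmetric resolution via images, $\Phi(\Ep)\subset U'\in\Gr_c(\Em)$, whose bad locus has codimension $p-c+1\ge2$. Since the excluded case $c=p=q$ is exactly where both resolutions fail, I would split into these two cases, and otherwise fall back on the normality argument above.
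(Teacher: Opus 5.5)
Your argument is correct, and your main route is genuinely different from the paper's. You invoke the normality of the determinantal variety $V_c$ (Hochster--Eagon, Kempf), together with $\codim_{V_c}V_{c-1}=p+q-2c+1\ge2$ and algebraic Hartogs. This is uniform in $c$, purely algebraic, and needs no reduction to homogeneous pieces, but it outsources the real content to a substantial theorem.

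The paper proves only what it needs. It splits $F$ into eigenfunctions $F_d$ of the central torus $\Phi\mapsto t\Phi$. For $c<\min(p,q)$ it pulls $F_d$ back along $\mu(A,B)=BA$ from the set $\mathcal U$ of pairs with $A$ surjective and $B$ injective. The complement of $\mathcal U$ in $\Hom(\Ep,\C^c)\times\Hom(\C^c,\Em)$ has codimension $\min(p,q)-c+1\ge2$, so the pullback extends by Riemann's theorem to an entire function. Homogeneity shows this is a $\GL_c$-invariant polynomial, and the first fundamental theorem for $\GL_c$ finishes. This is in effect the invariant-theoretic proof of the normality you quote, specialised to the extension statement. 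Its price is a case split: $\mathcal U$ has a codimension-one complement when $c\in\{p,q\}$, and there the paper uses directly that $R_c$ is open in $\Hom(\Ep,\Em)$ with complement of codimension $|p-q|+1$.

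Your Kempf-resolution alternative has the same case split and gives the finer decomposition $\C[R_c]\cong\bigoplus_{\ell(\lambda)\le c}S_\lambda\Ep\otimes S_\lambda\Em^*$. To make it a proof, you must show that pullback of polynomials to $\widetilde V_c$ is \emph{onto} $\C[\widetilde V_c]$; the image of polynomials in $\C[V_c]$ is not what matters. Surjectivity holds because on each summand the pullback is the Borel--Weil map $S_\lambda\Ep\to H^0(S_\lambda Q_+)$ induced by $\Ep\otimes\cO\to Q_+$. Also, the target of $\Phi$ in your codimension count is $\Em\cong\C^q$, not $\C^{q-a}$; your value $q-c+1$ is the correct one.
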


\begin{proof}
Since $\C[R_c]$ is a locally finite $K$-module, $F$ is a finite sum $\sum_dF_d$ of eigenfunctions of the central torus $\{(1,t)\}\subset K$, which acts by $\Phi\mapsto t\Phi$; thus $F_d(t\Phi)=t^dF_d(\Phi)$. It suffices to treat one $F_d$.

\emph{Case $c<\min(p,q)$.} Let $\mathcal U\subset\Hom(\Ep,\C^c)\times\Hom(\C^c,\Em)$ be the set of pairs $(A,B)$ with $A$ surjective and $B$ injective, and $\mu(A,B)=BA$. Then $\mu:\mathcal U\to R_c$ is surjective, and $G=F_d\circ\mu$ is holomorphic on $\mathcal U$ and invariant under $(A,B)\mapsto(gA,Bg^{-1})$, $g\in\GL_c$. The complement of $\mathcal U$ has codimension $\min(p,q)-c+1\ge2$, so $G$ extends to an entire function \cite[Ch.~7]{GrauertRemmert}. It is homogeneous of degree $d$ in $A$ and in $B$ separately, hence $d\ge0$ and $G$ is a $\GL_c$-invariant polynomial. By the first fundamental theorem of invariant theory for $\GL_c$ \cite{Weyl46}, $G$ is a polynomial in the entries of $BA$, i.e.\ $G=P\circ\mu$ for a polynomial $P$ on $\Hom(\Ep,\Em)$. Thus $F_d=P$ on $R_c$.

\emph{Case $c=p<q$.} Then $R_p$ is the set of injective maps, whose complement in $\Hom(\Ep,\Em)$ has codimension $q-p+1\ge2$. So $F_d$ extends to an entire homogeneous function, which is a polynomial.

\emph{Case $c=q<p$.} Then $R_q$ is the set of surjective maps, with complement of codimension $p-q+1\ge2$, and the same argument applies. Since $c\le\min(p,q)$, these cases exhaust all possibilities except $c=p=q$.
\end{proof}

\begin{proof}[Proof of Theorem~\ref{thmF}]
(i) One inclusion is Lemma~\ref{lem:CRextension}. Conversely, the restriction to $M$ of $F\in\C[\kappa]$ is CR and $K_0$-finite, since $\C[\kappa]$ is a locally finite $K$-module.

(ii) Suppose $M$ is not of holomorphic type. Then $\C[\kappa]=\C$ by Theorem~\ref{thmE}. For $\varphi\in CR^\infty(M)$ and an irreducible type $\delta$, the isotypic component $\varphi_\delta=d_\delta\int_{K_0}\overline{\chi_\delta(k)}\,\varphi\circ k^{-1}\,dk$ is again CR, because $K_0$ acts by CR automorphisms. It is $K_0$-finite of type $\delta$, so by (i) it is the restriction of an element of $\C[\kappa]_\delta$, which is $0$ for $\delta\ne1$. Hence $\varphi$ is $K_0$-invariant, and constant because $M$ is a single $K_0$-orbit.

Now suppose $M$ is of holomorphic type. Then $\C[\kappa]$ contains a non-constant $F$ by Theorem~\ref{thmE}, applied with $(r,s)=(a,b)$ and $d=c\ge1$. Its restriction to $M$ is non-constant, since otherwise $F$ would be constant near $M$ and hence on $\kappa$.

(iii) Injectivity holds because $M$ is a uniqueness set in the connected manifold $\kappa$.

If $M$ is not of holomorphic type, let $f\in\cO(\kappa)$. By Corollary~\ref{cor:fourier} on $\kappa$, each $f_\delta$ is $K_0$-finite and holomorphic on $\kappa$, hence lies in $\C[\kappa]_\delta$ by Theorem~\ref{thm:alg} and the identity theorem. So $f_\delta=0$ for $\delta\ne1$, and $f$ is constant. Thus $\cO(\kappa)=\C=CR^\infty(M)$.

If $M$ is of holomorphic type with $k=p$ and $(c,p,q)\ne(p,p,p)$, work in the chart $\cA$, where $\kappa=R_c$ and $M$ is the set of rank-$c$ partial isometries. Choose orthonormal $u_1,\dots,u_c\in\Ep$ and $v_1,\dots,v_c\in\Em$, put $u=u_1$, $v=v_1$, and let $\Phi_0=2vu^*+\sum_{i\ge2}v_iu_i^*\in R_c$. Then $\varphi=1/(\la\Phi u,v\ra-2)$ is real-analytic and CR on all of $M$. Indeed, every $\Phi\in M$ is a partial isometry, so $\|\Phi\|=1$; since $u$ and $v$ are unit vectors, $|\la\Phi u,v\ra|\le1<2$, and the denominator is a holomorphic polynomial without zeros on $M$. If $\varphi=F|_M$ with $F\in\cO(\kappa)$, then $F\cdot(\la\Phi u,v\ra-2)-1$ vanishes on $M$, hence on $\kappa$, which is impossible at $\Phi_0$.

If $c=p=q$, then $M\cong\U(p)$ is totally real of maximal dimension in $\kappa=\mathrm{Iso}(\Ep,\Em)$. Every smooth function on $M$ is CR, while restrictions of holomorphic functions are real-analytic. The case $a=0$, $k=q$ is symmetric.

(iv) If $M$ is not of holomorphic type, every $\varphi\in CR^0(M)$ is a uniform limit of elements of $CR^\infty(M)=\C$ by Lemma~\ref{lem:approx} and (ii), hence constant. If $M$ is of holomorphic type, $CR^0(M)\supset CR^\infty(M)\ne\C$ by (ii).

(v) Since $M\subset\cA$, restrictions of polynomials in $\Phi$ are CR, and $CR^0(M)$ is uniformly closed; so $\cP(M)\subset CR^0(M)$. Conversely, by Lemma~\ref{lem:approx}, (i) and Lemma~\ref{lem:regpoly}, every $\varphi\in CR^0(M)$ is a uniform limit of restrictions of polynomials. Hence $CR^0(M)=\cP(M)$.

Here the polynomial hull is taken in the vector space $\Hom(\Ep,\Em)\cong\cA$, not in $Z$; it coincides with the hull of Theorem~\ref{thm:hull}, a compact subset of $Z$, only because both equal the compact set $K_c$, which lies in $\cA$. The polynomially convex hull of $M$ in $\Hom(\Ep,\Em)$ is $K_c$. Indeed, the $(c+1)$-minors are polynomials vanishing on $M$, so the hull lies in $V_c$. The closed operator-norm ball is convex and compact, hence polynomially convex, so the hull lies in $\overline\cD$. Conversely $K_c$ lies in the hull by Lemma~\ref{lem:maxprinciple} with $\eta=0$. For a compact set $S\subset\C^N$ the maximal ideal space of $\cP(S)$ is the polynomial hull $\widehat S$, and restriction $\cP(\widehat S)\to\cP(S)$ is an isometric isomorphism by the maximum principle \cite[Ch.~II--III]{Gamelin69}. This proves the claim.

If $c=p=q$, the CR dimension of $M$ is $0$, so every continuous function is CR in the sense of distributions and $CR^0(M)=C(M)$.
\end{proof}

\begin{remark}
Marini and Nacinovich \cite{MN18} prove isomorphisms between tangential Cauchy--Riemann cohomology of $\mathfrak n$-reductive compact homogeneous CR manifolds and Dolbeault cohomology of their canonical embeddings, under pseudoconcavity assumptions. Theorem~\ref{thmF}(iii) shows that in degree zero, for Matsuki orbits of $\SU(p,q)$ in Grassmannians, the isomorphism holds for \emph{every} orbit not of holomorphic type, with no pseudoconcavity hypothesis. For instance it holds for $M(1,1,1)\subset\Gr_3(\C^8)$, whose Levi form is degenerate (Proposition~\ref{prop:levi}). It fails exactly for the orbits of holomorphic type; for $c=1$ these are strictly pseudoconvex.
\end{remark}

\subsection{A geometric proof of constancy by rational curves}

Theorem~\ref{thmF}(ii) rests on Theorem~\ref{thmE}, i.e.\ on the algebra of regular functions on $\kappa$. In the range $c<\min(p-a,q-b)$ there is a direct geometric reason for constancy: $M$ contains many compact complex curves, and they connect any two points of a fibre.

\begin{lemma}\label{lem:isolines}
Let $Q$ be a Hermitian space of signature $(s,t)$ with $s,t\ge2$, and let $x,y\in Q$ be non-zero isotropic vectors. Then there is a non-zero isotropic $z\in Q$ such that $\la x,z\ra_\C$ and $\la z,y\ra_\C$ are totally isotropic \textup{(}possibly $z\in\{x,y\}$\textup{)}.
\end{lemma}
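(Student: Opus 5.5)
The plan is to reduce the statement to finding a non-zero isotropic vector in $\la x\ra^{\perp_h}\cap\la y\ra^{\perp_h}$. A plane spanned by two isotropic vectors $x,z$ is totally isotropic exactly when $h(x,z)=0$, since then the Gram matrix of $\la x,z\ra_\C$ vanishes identically. So it suffices to find a non-zero $z$ with
\[
h(z,z)=0,\qquad h(x,z)=0,\qquad h(z,y)=0 .
\]
If $z$ is proportional to $x$ or to $y$, the corresponding span is a line, and an isotropic line is trivially totally isotropic. This covers the degenerate cases allowed in the statement. I will split the argument according to whether $h(x,y)$ vanishes.

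\emph{Case $h(x,y)=0$.} Take $z=y$. The three conditions hold: $h(y,y)=0$ and $h(x,y)=0$ by assumption, and $h(y,y)=0$ again. So $\la x,y\ra_\C$ is totally isotropic, and $\la y,y\ra_\C=\C y$ is an isotropic line. This case also contains the situation where $x$ and $y$ are proportional.

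\emph{Case $h(x,y)\neq0$.} Then $x$ and $y$ are linearly independent, and $P=\la x,y\ra_\C$ has Gram matrix $\begin{pmatrix}0&h(x,y)\\ \overline{h(x,y)}&0\end{pmatrix}$. Its determinant is $-|h(x,y)|^2<0$, so $P$ is non-degenerate of signature $(1,1)$. Hence $Q=P\oplus P^{\perp_h}$, and by Sylvester's law of inertia $P^{\perp_h}$ has signature $(s-1,t-1)$. Since $s,t\ge2$, both indices are at least $1$. Choose $e_+,e_-\in P^{\perp_h}$ with $h(e_+,e_+)=1$, $h(e_-,e_-)=-1$ and $h(e_+,e_-)=0$. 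Then $z=e_++e_-$ is non-zero and isotropic. Because $z$ lies in $P^{\perp_h}$, it also satisfies $h(x,z)=h(z,y)=0$, so both $\la x,z\ra_\C$ and $\la z,y\ra_\C$ are totally isotropic planes.

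There is no real obstacle in this argument. The only point needing care is that the hypothesis $s,t\ge2$ is used precisely so that the complement $P^{\perp_h}$ of the hyperbolic plane is indefinite, which guarantees that it contains isotropic vectors.
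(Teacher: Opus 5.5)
Your proof is correct and follows essentially the same route as the paper: take $z=y$ when $h(x,y)=0$, and otherwise take a non-zero isotropic vector in the orthogonal complement of the $(1,1)$-plane $\la x,y\ra_\C$, which has signature $(s-1,t-1)$. Your Gram-matrix check and the explicit choice $z=e_++e_-$ simply spell out details that the paper leaves implicit.
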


\begin{proof}
If $h(x,y)=0$ take $z=y$. Otherwise $\la x,y\ra_\C$ is non-degenerate of signature $(1,1)$, so its $h$-orthogonal complement has signature $(s-1,t-1)$ with both entries $\ge1$, and contains a non-zero isotropic $z$. Then $z\perp_hx$ and $z\perp_hy$.
\end{proof}

\begin{lemma}\label{lem:chain}
Let $V'$ be a Hermitian space and $N_1,N_2\in\Iso_c(V')$, $c\ge1$. There is a sequence $N_1=X_0,X_1,\dots,X_m=N_2$ in $\Iso_c(V')$ with $\dim(X_{i-1}\cap X_i)\ge c-1$ for all $i$.
\end{lemma}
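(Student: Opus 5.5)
The plan is to induct on the defect $c-\dim(N_1\cap N_2)$: I will build the chain by exchanging one vector at a time, moving from $N_1$ toward $N_2$. If the defect is $0$, then $N_1=N_2$ and the chain of length $0$ suffices. Otherwise it is enough to prove the following \emph{exchange step}. Given $X\in\Iso_c(V')$ with $X\ne N_2$, there is $X'\in\Iso_c(V')$ with
\[
\dim(X\cap X')\ge c-1\qquad\text{and}\qquad\dim(X'\cap N_2)>\dim(X\cap N_2).
\]
Iterating this step from $X_0=N_1$ raises $\dim(X_i\cap N_2)$ strictly at each stage. After at most $c$ steps we reach $X_m=N_2$.

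To prove the exchange step, put $I=X\cap N_2$, of dimension $j<c$, and pick $y\in N_2\setminus X$. Consider the linear functional $\lambda:X\to\C$, $x\mapsto h(x,y)$.
\begin{itemize}
\item Its kernel has dimension $\ge c-1$.
\item The kernel contains $I$, because $N_2$ is totally isotropic and $I,y\subset N_2$.
\end{itemize}
Since $\dim I\le c-1\le\dim\ker\lambda$, we can choose a $(c-1)$-dimensional subspace $H$ with $I\subset H\subset\ker\lambda$. Set $X'=H\oplus\C y$.

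Next I check that $X'$ has the required properties.
\begin{itemize}
\item The sum is direct, because $y\notin X\supset H$, so $\dim X'=c$.
\item $X'$ is totally isotropic: $H\subset X$ is isotropic, $y\in N_2$ is isotropic, and $H\perp_h y$ because $H\subset\ker\lambda$. By polarisation, $h$ then vanishes identically on $X'$.
\item $X\cap X'\supset H$, so $\dim(X\cap X')\ge c-1$.
\item $X'\cap N_2\supset I\oplus\C y$, which has dimension $j+1$, again because $y\notin X$.
\end{itemize}

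The only point needing care is the choice of $H$. It must contain the already-shared part $I$, so that progress is not lost, while also being orthogonal to the new vector $y$. This is exactly where isotropy of $N_2$ is used: it forces $I\subset\ker\lambda$. No signature hypothesis on $V'$ is needed beyond $\Iso_c(V')\ne\emptyset$, which the statement presupposes. In particular Lemma~\ref{lem:isolines} is not required for this chain lemma. It will presumably enter later, when the consecutive planes $X_{i-1},X_i$ are joined by rational curves inside $\Iso_c$.
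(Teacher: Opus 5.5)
Your proof is correct and is the paper's argument, recast as an iterated exchange step rather than an induction on $e=c-\dim(N_1\cap N_2)$. The key move is the same: replace $N_1$ by $S\oplus\C w$, with $w\in N_2\setminus N_1$ and $N_1\cap N_2\subset S\subset N_1\cap w^{\perp_h}$, $\dim S=c-1$. You are also right, as in the paper, that Lemma~\ref{lem:isolines} plays no role here; it is used only afterwards, in Proposition~\ref{prop:curves}, to join planes with a common $(c-1)$-dimensional subspace by rational curves.
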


\begin{proof}
Induction on $e=c-\dim(N_1\cap N_2)$; there is nothing to prove if $e=0$. If $e>0$, put $Y=N_1\cap N_2$ and choose $w\in N_2\setminus N_1$. Since $N_2$ is totally isotropic, $Y\subset N_1\cap w^{\perp_h}$, and $\dim(N_1\cap w^{\perp_h})\ge c-1\ge\dim Y$. Choose $Y\subset S\subset N_1\cap w^{\perp_h}$ with $\dim S=c-1$ and put $X_1=S\oplus\C w$. It is totally isotropic and $c$-dimensional, $\dim(N_1\cap X_1)\ge c-1$, and $X_1\cap N_2\supset Y\oplus\C w$, so $c-\dim(X_1\cap N_2)<e$.
\end{proof}

\begin{proposition}\label{prop:curves}
Let $c\ge1$ and $c<\min(p-a,q-b)$. Through any two points of a fibre of $\pi|_M:M\to\Gr_a(\Ep)\times\Gr_b(\Em)$ passes a connected chain of rational curves $\PP^1\subset M$. Consequently every $C^1$ CR function on $M$ is constant.
\end{proposition}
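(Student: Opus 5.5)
Fix a fibre of $\pi|_M$ over $(W_+,W_-)$ and put $V'=(W_+\oplus W_-)^{\perp_h}$, a Hermitian space of signature $(p-a,q-b)$. By Theorem~\ref{thm:structure}, the points of the fibre are the planes $W_+\oplus W_-\oplus N$ with $N\in\Iso_c(V')$. It therefore suffices to work inside $\Iso_c(V')\subset\Gr_c(V')$.

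\emph{Step 1: the curves.} Let $N,N'\in\Iso_c(V')$ with $\dim(N\cap N')=c-1$. Put $S=N\cap N'$ and $Q=S^{\perp_h}/S$. Then $Q$ has signature $(p-a-c+1,\,q-b-c+1)$, and both entries are $\ge2$ by the hypothesis $c<\min(p-a,q-b)$. The lines $x=N/S$ and $y=N'/S$ in $Q$ are isotropic. Lemma~\ref{lem:isolines} gives an isotropic $z$ such that $\la x,z\ra_\C$ and $\la z,y\ra_\C$ are totally isotropic 2-planes. For a totally isotropic 2-plane $P\subset Q$, let $\tilde P$ be its preimage in $S^{\perp_h}$. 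Every line $\ell\subset P$ gives the totally isotropic $c$-plane $S+\tilde\ell$. This defines a linearly embedded $\PP(P)\cong\PP^1\subset\Iso_c(V')$, and it is a complex curve in $Z$. So $N$ and $N'$ are joined by a chain of at most two such curves, through $S+\tilde z$.

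\emph{Step 2: chains.} Combine Step 1 with Lemma~\ref{lem:chain}. Lemma~\ref{lem:chain} joins any $N_1,N_2$ by a sequence of isotropic planes whose consecutive members meet in dimension $\ge c-1$. Where consecutive members are equal nothing is needed, and otherwise Step 1 connects them. The images in $M$ under $N\mapsto W_+\oplus W_-\oplus N$ form a connected chain of rational curves $\PP^1\subset M$.

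\emph{Step 3: constancy.} Let $f$ be a $C^1$ CR function on $M$, and let $C\subset M$ be one of the curves $\PP^1$. Its tangent space is a complex line contained in $TM$, hence contained in the complex tangent space $H(M)$. So the CR equations restrict to $\bar\partial(f|_C)=0$, and $f|_C$ is holomorphic on the compact curve $\PP^1$, hence constant. By Step 2, $f$ is constant on each fibre of $\pi|_M$. Then $f$ descends to a continuous function on $B=\Gr_a(\Ep)\times\Gr_b(\Em)$.

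To conclude, use the radical fibration of Theorem~\ref{thm:structure}(iv). Its fibres $\Gr_a(\C^{p-c})\times\Gr_b(\C^{q-c})$ are compact complex submanifolds of $M$. Hence $f$ is holomorphic, thus constant, on each of them, and moving $(W_+,W_-)$ along such a fibre with $N$ fixed changes the base point. Explicitly: given $(W_+,W_-)$ and $(W_+',W_-')$, choose a point in each $\pi$-fibre. First use fibre constancy to move to a point whose isotropic part $N$ is $h$-orthogonal to both $W_+\oplus W_-$ and $W_+'\oplus W_-'$. Then use the compact complex radical fibre over $N$ to pass from one base point to the other.

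\emph{Main obstacle.} The step I expect to be the main obstacle is finding such an $N$: one needs $\Iso_c$ of $(W_+ + W_+' \oplus W_- + W_-')^{\perp_h}$ to be nonempty, which may fail for large $a,b$. If it fails, I will instead connect base points through a sequence of elementary moves changing $W_\pm$ by one vector at a time, or simply argue that the set of base points reachable from a given one is open and closed in the connected base $B$ by $K_0$-equivariance.
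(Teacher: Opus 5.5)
Your Steps~1 and~2 are the paper's argument: the curves $\PP(L/S)$ for totally isotropic $S\subset L$ of dimensions $c-1$ and $c+1$, Lemma~\ref{lem:isolines} applied in $S^{\perp_h}/S$, and Lemma~\ref{lem:chain}. The two proofs differ in how they pass from constancy on $\pi$-fibres to global constancy. The paper writes $f=g\circ\pi$ and proves $\pi_*T^{0,1}_WM=T^{0,1}_\beta B$ by a dimension count. That count uses genericity of $F_\beta$ in $\pi^{-1}(\beta)$ and the CR dimension of $M$, and it shows that $g$ is holomorphic on the compact base. You instead use a second family of compact complex submanifolds of $M$, the $\nu$-fibres of Theorem~\ref{thm:structure}(iv), and show that $\pi$-fibres and $\nu$-fibres together connect all of $M$. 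This is a valid and more elementary route. It never differentiates $g$, and it shows that $M$ is chain-connected by compact complex submanifolds, which fits the aim of a ``direct geometric reason''. The paper's tangential argument, by contrast, needs no further hypothesis once $f$ is constant on $\pi$-fibres; your route spends the hypothesis $c<\min(p-a,q-b)$ a second time.

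As you suspected, your first version of the connecting step fails. For generic $W_+,W_+'$ it needs $c\le p-\min(2a,p)$. This is false, for example, for $p=q=5$, $a=2$, $b=0$, $c=2$, which satisfies $c<3$. Commit to the elementary-move version, which works exactly because of the hypothesis. Suppose $\dim(W_+\cap W_+')=a-1$. Then $(W_++W_+')\oplus W_-$ is non-degenerate of signature $(a+1,b)$, and its $h$-orthogonal complement has signature $(p-a-1,q-b)$. Since $c\le p-a-1$ and $c\le q-b$, this complement contains some $N\in\Iso_c$ by Lemma~\ref{lem:Iso}. Then $W_+\oplus W_-\oplus N$ and $W_+'\oplus W_-\oplus N$ both lie in $M$ and in the connected $\nu$-fibre over $N$, so $g(W_+,W_-)=g(W_+',W_-)$. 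Moves in $W_-$ work the same way using $c\le q-b-1$. Any two points of $\Gr_a(\Ep)$, respectively $\Gr_b(\Em)$, are joined by a chain of planes whose consecutive members meet in codimension one, so $g$ is constant. Drop the alternative ``open and closed by $K_0$-equivariance''. Invariance of the relation under $K_0$ says nothing about openness of its classes, and a single elementary move reaches only a Schubert variety, not a neighbourhood.
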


\begin{proof}
Write $B=\Gr_a(\Ep)\times\Gr_b(\Em)$, fix $\beta=(W_+,W_-)\in B$, and put $V'=(W_+\oplus W_-)^{\perp_h}$, of signature $(p',q')=(p-a,q-b)$. By Theorem~\ref{thm:structure}, the fibre is $F_\beta=\{W_+\oplus W_-\oplus N:N\in\Iso_c(V')\}$.

\emph{Rational curves.} Let $S\subset L\subset V'$ be totally isotropic of dimensions $c-1$ and $c+1$; such $L$ exist because $c+1\le\min(p',q')$. Then
\[
C_{S,L}=\{W_+\oplus W_-\oplus N:\ S\subset N\subset L\}\cong\PP(L/S)\cong\PP^1
\]
is a compact complex curve in $Z$ contained in $F_\beta\subset M$.

\emph{Chains.} For fixed $S$, the planes $N\in\Iso_c(V')$ containing $S$ correspond to the isotropic lines of $S^{\perp_h}/S$, a Hermitian space of signature $(p'-c+1,q'-c+1)$ with both entries $\ge2$. By Lemma~\ref{lem:isolines}, any two of them are joined by at most two curves $C_{S,L}$. Two isotropic $c$-planes meeting in dimension $\ge c-1$ contain a common $S$, so by Lemma~\ref{lem:chain} any two points of $F_\beta$ are joined by a chain of such curves.

\emph{Constancy.} Let $f$ be a $C^1$ CR function on $M$. For a complex submanifold $\Sigma\subset M$ of $Z$ we have $T^{0,1}\Sigma\subset\C TM\cap T^{0,1}Z=T^{0,1}M$, so $f|_\Sigma$ is holomorphic. Hence $f$ is constant on each $C_{S,L}$, and therefore on each fibre $F_\beta$. So $f=g\circ\pi$ with $g$ a $C^1$ function on $B$ (compose with local smooth sections).

We show that $\pi_*T^{0,1}_WM=T^{0,1}_\beta B$ for $W\in F_\beta$. The fibre $F_\beta$ is the Matsuki orbit $M(0,0,c)$ of $\SU(V')$ in $\Gr_c(V')$, whose dual $K$-orbit is the fibre $\pi^{-1}(\beta)\subset\kappa$ of Proposition~\ref{prop:K}(ii) (identify $E_\pm/W_\pm$ with $E_\pm\ominus W_\pm$). By Theorem~\ref{thm:structure}(iii), applied to $\SU(V')$, $F_\beta$ is generic of real codimension $c^2$ in $\pi^{-1}(\beta)$. Hence the kernel of $\pi_*$ on $T^{0,1}_WM$, which is $T^{0,1}_WM\cap\C T_WF_\beta=T^{0,1}_WF_\beta$, has dimension $\dim_\C\pi^{-1}(\beta)-c^2$. The CR dimension of $M$ is $\dim_\C\kappa-c^2$. So $\pi_*T^{0,1}_WM$ has dimension $\dim_\C\kappa-\dim_\C\pi^{-1}(\beta)=\dim_\C B$, and it lies in $T^{0,1}_\beta B$ because $\pi$ is holomorphic.

Thus for $\bar Z\in T^{0,1}_\beta B$ there is $\bar X\in T^{0,1}_WM$ with $\pi_*\bar X=\bar Z$, and $\bar Zg=\bar Xf=0$. So $g$ is holomorphic on the compact connected manifold $B$, hence constant.
\end{proof}

\begin{remark}
If $c<\min(p-a,q-b)$, then $M$ is not of holomorphic type: $k=q$ and $a=0$ would force $c=q-b$. So Proposition~\ref{prop:curves} is a special case of Theorem~\ref{thmF}(ii),(iv), with a proof that avoids representation theory. It does not cover all orbits that are not of holomorphic type. For instance $M(1,1,1)\subset\Gr_3(\C^8)$ has $c=1=p-a$, its fibres are strictly pseudoconvex spheres $S^9$ containing no complex curves, and constancy there genuinely needs Theorem~\ref{thmE} (Example~\ref{ex:ballbundle}).
\end{remark}

%=====================================================================
\section{Sections of line bundles}\label{sec:bundles}
%=====================================================================

Every line bundle on $Z=\Gr_k(\C^n)$ is a power $\cO_Z(m)$ of the Plücker bundle. Let $\widehat Z\subset\Lambda^k\C^n$ be the affine cone over the Plücker embedding and $\pi:\widehat Z\setminus\{0\}\to Z$ the projection, so that $\widehat Z\setminus\{0\}$ is the complement of the zero section in $\cO_Z(-1)$. Holomorphic sections of $\cO_Z(m)$ over $U\subset Z$ correspond to holomorphic functions $f$ on $\pi^{-1}(U)$ with $f(tw)=t^mf(w)$ for $t\in\C^*$.

The homogeneous coordinate ring of the Plücker embedding is $\C[\widehat Z]=\bigoplus_{m\ge0}H^0(Z,\cO_Z(m))$, and $\widehat Z$ is normal \cite[Ch.~9]{Fulton97}, \cite{Brion05}. Let $\widehat K=K\times\C^*$ act on $\widehat Z$ by $(g,t)\cdot w=t\,\Lambda^k(g)w$, with maximal compact subgroup $\widehat K_0=K_0\times\U(1)$. The $\widehat K$-orbits in $\widehat Z\setminus\{0\}$ are the sets $\pi^{-1}(\kappa')$, $\kappa'$ a $K$-orbit. The stabiliser of $w\in\pi^{-1}(W)$ is
\[
\{(g,t):\ g\in K_W,\ t\det(g|_W)=1\}\cong K_W ,
\]
which is connected by Lemma~\ref{lem:connected}.

\begin{proof}[Proof of Theorem~\ref{thmG}]
Uniqueness holds because $U$ is connected. For existence, it suffices to treat a connected $K_0$-invariant neighbourhood $U'\subset U$ of $M$: a global section agreeing with $s$ on $U'$ agrees with it on $U$, by the identity theorem.

Let $f$ be the homogeneous function of degree $m$ on $\widehat U=\pi^{-1}(U')$ corresponding to $s$. The set $\widehat U$ is connected, $\widehat K_0$-invariant and $\C^*$-stable. Decompose $f=\sum_\delta f_\delta$ into $\widehat K_0$-types; each $f_\delta$ is again homogeneous of degree $m$. Repeat the proof of Proposition~\ref{thm:codim} on the $\widehat K$-manifold $\widehat Z\setminus\{0\}$, using the open orbit $\pi^{-1}(\kappa_0)$ and the connected set $\widehat U\cap\pi^{-1}(\kappa_0)$. This shows that $f_\delta$ coincides on $\widehat U\cap\pi^{-1}(\kappa_0)$ with a regular function on $\pi^{-1}(\kappa_0)$, which extends holomorphically to $\widehat K\widehat U=\pi^{-1}(KU')\supset\pi^{-1}(Z\setminus Y_\kappa)$.

As $M$ is not of holomorphic type, the proof of Theorem~\ref{thm:vanishing} gives $\codim Y_\kappa\ge2$, so $\codim\pi^{-1}(Y_\kappa)\ge2$ in the manifold $\widehat Z\setminus\{0\}$. The extension therefore extends to $\widehat Z\setminus\{0\}$, by the Riemann extension theorem across analytic subsets of codimension $\ge2$ in a manifold. It then extends across the vertex, for the following reasons:
\begin{itemize}
\item the vertex is a single point of the normal complex space $\widehat Z$;
\item $\dim\widehat Z=k(n-k)+1\ge2$, so the vertex has codimension $\ge2$;
\item holomorphic functions on a normal complex space extend across analytic subsets of codimension $\ge2$ (second Riemann extension theorem \cite[Ch.~7]{GrauertRemmert}).
\end{itemize}
This gives a holomorphic function $G_\delta$ on the affine variety $\widehat Z$, homogeneous of degree $m$.

Since $\widehat Z$ is a closed analytic subset of the Stein manifold $\Lambda^k\C^n$, Cartan's Theorem~B applied to its ideal sheaf shows that the restriction $\cO(\Lambda^k\C^n)\to\cO(\widehat Z)$ is surjective. So $G_\delta$ extends to an entire function on $\Lambda^k\C^n$. Its homogeneous Taylor components of degree $\ne m$ vanish on the cone $\widehat Z$. Hence $G_\delta=0$ if $m<0$, and otherwise $G_\delta\in\C[\widehat Z]_m=H^0(Z,\cO_Z(m))$. Only the finitely many types occurring in $H^0(Z,\cO_Z(m))$ can have $f_\delta\ne0$. So $f$ is the finite sum $\sum_\delta G_\delta$, by Corollary~\ref{cor:fourier}, and $s$ is the restriction of the corresponding global section.
\end{proof}

\begin{remark}
For $M$ of holomorphic type with $k=p$, the Plücker coordinate $p_0$ does not vanish on the chart $\cA$. So $s\mapsto s/p_0^m$ identifies sections of $\cO_Z(m)$ near $M$ with holomorphic functions near $M$, and Theorem~\ref{thm:hull} and Proposition~\ref{prop:exceptional} hold verbatim for sections of $\cO_Z(m)$.
\end{remark}

%=====================================================================
\section{Comparison with cycle spaces}\label{sec:cycles}
%=====================================================================

For a Matsuki pair $(\kappa,\gamma)$ let $\mathcal C(\gamma)$ be the connected component of the identity of $\{g\in G:\ g\kappa\cap\gamma\ \text{is non-empty and compact}\}$ \cite{GM03,HN03}. Gindikin and Matsuki \cite{GM03} call two particular closed $K$-orbits of a Hermitian real form \emph{of holomorphic type}; every non-closed $K$-orbit is of non-holomorphic type. They conjectured that $\mathcal C(\gamma)=\Om$ whenever $\kappa$ is of non-holomorphic type. This was proved in the remaining cases by Matsuki \cite{MatsukiIII}; in particular $\mathcal C(\gamma)=\Om$ whenever $\kappa$ is not closed, see also \cite{HN03,Ntatin}.

\begin{proposition}\label{prop:cyclecompact}
Let $g\in G$ with $P:=g\Ep$ $h$-positive definite and $Q:=g\Em$ $h$-negative definite. Then for every admissible $(a,b,c)$,
\[
\begin{aligned}
g\kappa(a,b)\cap O(a,b,c)=\bigl\{W_+\oplus W_-\oplus N:\ &W_+\in\Gr_a(P),\ W_-\in\Gr_b(Q),\\ &N\in\Iso_c\bigl((W_+\oplus W_-)^{\perp_h}\bigr)\bigr\},
\end{aligned}
\]
which is non-empty and compact.
\end{proposition}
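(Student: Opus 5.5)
The plan is to reduce to the description of $M(a,b,c)$ in Theorem~\ref{thm:structure}, with $(\Ep,\Em)$ replaced by $(P,Q)$. Since $g\in G$ is linear, $g\kappa(a,b)=\{W:\dim W\cap P=a,\ \dim W\cap Q=b\}$. Moreover $P$ and $Q$ are definite of dimensions $p$ and $q$, so $P\cap Q=0$ and $\C^n=P\oplus Q$. However, $P$ and $Q$ need not be $h$-orthogonal. That is the only difference from the base case, and the proof must avoid using $P\perp_hQ$.

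\emph{Inclusion $\subset$.} Let $W\in g\kappa(a,b)\cap O(a,b,c)$ and put $W_+=W\cap P$, $W_-=W\cap Q$, $N=N_W$. Then $N\perp_h W$, so $N\perp_h W_+\oplus W_-$. Also $(W_+\oplus W_-)\cap N=0$. To see this, take $x=w_++w_-$ in the intersection. Then $h(x,w_+)=0$ for all $w_+\in W_+$ and $h(x,w_-)=0$ for all $w_-\in W_-$, so $h(x,x)=h(x,w_+)+h(x,w_-)=0$ after expanding. Similarly $h(w_+,w_+)+h(w_-,w_+)=0$ and $h(w_+,w_-)+h(w_-,w_-)=0$. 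Subtracting the conjugate of the second from the first gives $h(w_+,w_+)=h(w_-,w_-)$. The left side is $\ge0$ and the right side is $\le0$, so both vanish, hence $w_\pm=0$ by definiteness. By dimension count ($a+b+c=k$), $W=W_+\oplus W_-\oplus N$ with $N\in\Iso_c((W_+\oplus W_-)^{\perp_h})$.

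\emph{Inclusion $\supset$.} Take $W=W_+\oplus W_-\oplus N$ as in the statement. The restriction of $h$ to $W_+\oplus W_-$ is non-degenerate with signature $(a,b)$. This follows because the computation above shows its radical is zero, and it contains an $a$-dimensional positive subspace and a $b$-dimensional negative subspace. Since $N$ is isotropic and orthogonal to $W_+\oplus W_-$, we get $\sig W=(a,b,c)$ and $N_W=N$. It remains to check $W\cap P=W_+$ (the case of $Q$ is symmetric). Let $x=w_++w_-+\nu\in P$. Then $x-w_+\in P\cap(W_-\oplus N)$. The space $W_-\oplus N$ is $h$-seminegative, since $h(w_-+\nu)=h(w_-,w_-)\le0$, while $P$ is positive definite. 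Hence $x-w_+=0$. Then $w_-+\nu=0$, and the direct sum forces $w_-=0=\nu$.

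\emph{Non-emptiness and compactness.} Non-emptiness requires an isotropic $c$-plane in $(W_+\oplus W_-)^{\perp_h}$, a space of signature $(p-a,q-b)$; admissibility gives $c\le\min(p-a,q-b)$. For compactness, the set is the image of the closed subset $\{(W_+,W_-,N):N\perp_hW_+\oplus W_-,\ N\ \text{isotropic}\}$ of the compact space $\Gr_a(P)\times\Gr_b(Q)\times\Gr_c(\C^n)$ under the continuous map $(W_+,W_-,N)\mapsto W_+\oplus W_-\oplus N$. This map is well defined by the independence shown above. Alternatively, $g$ can be chosen in $G_0$ when $P^{\perp_h}=Q$, but in general this fails, so the direct argument is the one to use.

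The main obstacle is the independence and signature step without orthogonality of $P$ and $Q$. It is handled by the positive-versus-negative sign argument above.
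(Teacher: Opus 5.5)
Your proof is correct and follows essentially the same route as the paper. Both arguments rest on three steps:
\begin{enumerate}
\item Independence of $W_+\oplus W_-$ and $N_W$ comes from the opposite signs of $h(w_+)$ and $h(w_-)$. You equate them directly, whereas the paper argues through the sign of $\alpha=h(w_+,w_-)$.
\item The reverse inclusion holds because the $h$-seminegative space $W_-\oplus N$ meets the positive definite $P$ only in $0$.
\item Non-emptiness follows from the signature $(p-a,q-b)$ of $(W_+\oplus W_-)^{\perp_h}$.
\end{enumerate}
Your compactness argument, as the continuous image of a closed subset of $\Gr_a(P)\times\Gr_b(Q)\times\Gr_c(\C^n)$, is a slightly more careful substitute for the paper's fibre-bundle phrasing.
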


\begin{proof}
Note that $g\kappa(a,b)=\{W:\dim W\cap P=a,\ \dim W\cap Q=b\}$, but $P$ and $Q$ need not be $h$-orthogonal. Denote the right-hand side by $T$.

\emph{$T$ is compact and non-empty.} For $W_+\subset P$ and $W_-\subset Q$, the space $W_+\oplus W_-$ contains an $a$-dimensional positive and a $b$-dimensional negative subspace, so it is non-degenerate of signature $(a,b)$. Its $h$-orthogonal complement has signature $(p-a,q-b)$, and $\Iso_c$ of it is non-empty and compact by admissibility and Lemma~\ref{lem:Iso}. Hence $T$, the total space of a fibre bundle with compact fibres over $\Gr_a(P)\times\Gr_b(Q)$, is compact and non-empty.

\emph{$T$ lies in $g\kappa(a,b)\cap O(a,b,c)$.} Each $W\in T$ has signature $(a,b,c)$, since $N$ is isotropic and orthogonal to $W_+\oplus W_-$. If $v=w_++w_-+\nu\in W\cap P$, then $u:=w_-+\nu=v-w_+\in P$, so $h(u)\ge0$. On the other hand $h(u)=h(w_-)\le0$, since $\nu\perp_hw_-$ and $\nu$ is isotropic. So $h(u)=0$, hence $u=0$ because $P$ is definite. Thus $W\cap P=W_+$, and symmetrically $W\cap Q=W_-$.

\emph{Conversely.} Let $W\in g\kappa(a,b)\cap O(a,b,c)$ and put $W_+=W\cap P$, $W_-=W\cap Q$, $N=N_W$. Suppose $v=w_++w_-\in N$. Then $h(v,w_+)=0=h(v,w_-)$, which gives $h(w_+)=-\overline{\alpha}$ and $h(w_-)=-\alpha$ with $\alpha=h(w_+,w_-)$. Since $h(w_+)$ is real, $\alpha$ is real. If $w_+\ne0$ then $\alpha<0$, and if $w_-\ne0$ then $\alpha>0$. Hence $w_+=w_-=0$, the sum $W_+\oplus W_-\oplus N$ is direct, and by dimension it equals $W$.
\end{proof}

\begin{proof}[Proof of Theorem~\ref{thmH}]
(i) By Proposition~\ref{prop:cyclecompact}, the connected set $\{g:\ g\Ep\in\cD_+,\ g\Em\in\cD_-\}$, which contains $e$, lies in $\{g:\ g\kappa\cap\gamma\ \text{non-empty and compact}\}$, hence in $\mathcal C(\gamma)$. Its image in $G/K$ is $\cD_+\times\cD_-$, the Akhiezer--Gindikin domain of $\SU(p,q)$ in its Hermitian realisation \cite{BHH03}, \cite[Part~II]{FHW}.

(ii) Let $k\le p$. The closed $K$-orbit $\kappa(k,0)=\Gr_k(\Ep)$ is the base cycle of the flag domain $O(k,0,0)$. Since $\Gr_k(g\Ep)$ is compact and connected, $g\Gr_k(\Ep)\cap O(k,0,0)$ is compact and non-empty iff $\Gr_k(g\Ep)\subset O(k,0,0)$. Every vector of $g\Ep$ lies in a $k$-dimensional subspace, so this holds iff $g\Ep$ is positive definite. Hence $\mathcal C(O(k,0,0))=\{g:\ g\Ep\in\cD_+\}^0$, and the cycles $g\Gr_k(\Ep)=\Gr_k(g\Ep)$ are parametrised by $\cD_+\cong\cD$. This is strictly larger than $\Om$, so by the theorem of Matsuki \cite{MatsukiIII} $\kappa(k,0)$ is of holomorphic type. The case $\kappa(0,k)$, $k\le q$, is symmetric.

(iii) If $M(p-c,0,c)$ is of holomorphic type with $c\ge1$, then $\kappa=R_c$ is not closed, since $0\in\overline{R_c}$. So $\kappa$ is of non-holomorphic type and $\mathcal C(\gamma)=\Om$ by \cite{MatsukiIII}. For $c=0$, the orbit $\kappa=\{\Ep\}=\kappa(p,0)$ is of holomorphic type by (ii). The symmetric case is the same. The two notions therefore agree exactly at the base points $\{\Ep\}$ and $\{\Em\}$.
\end{proof}

\begin{remark}
Theorem~\ref{thmH} separates two phenomena that both originate in the Harish-Chandra chart. Gindikin--Matsuki exceptionality concerns the family of moved cycles $g\kappa\cap\gamma$; it occurs only for closed $K$-orbits, whose Matsuki orbits are compact complex. Function-theoretic non-triviality concerns $M$ itself; it occurs for every rank stratum $R_c$, and its hull $\overline{\cD}\cap V_c$ contains the base point $\Ep$, the unique point where the two notions meet.
\end{remark}

%=====================================================================
\section{Levi forms in the case \texorpdfstring{$c=1$, $a=p-1$}{c=1, a=p-1}}\label{sec:levi}
%=====================================================================

Let $c=1$ and $a=p-1$, so $k=p+b$. Then $d=1=p-a$, and Proposition~\ref{prop:K} identifies $\kappa(p-1,b)$ with $L\setminus0_B$, where
\[
L=\Hom(\Ep/W_+,\Em/W_-)\cong\cO(-1)\boxtimes Q_b\qquad\text{over }B=\PP^{p-1}\times\Gr_b(\Em).
\]
Give $L$ the $K_0$-invariant metric obtained by identifying $E_\pm/W_\pm$ with $W_\pm^\perp\cap E_\pm$. Writing $W=W_+\oplus W_-\oplus\{u+Au\}$ with $u\in W_+^\perp\cap\Ep$, the Gram matrix of $h|_W$ is $\operatorname{diag}(I_{p-1},-I_b,|u|^2(1-|A|^2))$. Hence
\[
M=\{|A|=1\},\qquad\kappa\cap O(p,b,0)=\{0<|A|<1\},\qquad\kappa\cap O(p-1,b+1,0)=\{|A|>1\}.
\]

\begin{proposition}\label{prop:levi}
With respect to $\psi=\log|A|^2$, the Levi form of $M$ in $\kappa$ has signature
\[
(p+q-b-2,\ b,\ b(q-b-1)),
\]
counting positive, negative and zero eigenvalues. Its kernel is the tangent space of the $\nu$-fibres of Theorem~\ref{thm:structure}(iv).
\end{proposition}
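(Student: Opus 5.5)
The plan is to compute the Levi form intrinsically on the Hermitian vector bundle $L=\Hom(\Ep/W_+,\Em/W_-)\to B=\PP^{p-1}\times\Gr_b(\Em)$ of rank $q-b$, using the standard formula for $i\partial\bar\partial\log|v|^2$ on the complement of the zero section. Here $M=\{|A|=1\}$ is the unit sphere bundle. First I will note that $\dim_\C\kappa=(p-1)+b(q-b)+(q-b)$, which is consistent with Proposition~\ref{prop:K}(iii), and that the CR dimension of $M$ is one less. So the claimed signature must add up to $p+q-b-2+b+b(q-b-1)=\dim_\C\kappa-1$; this serves as a check.

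At a point $v=A\in M$, I will use the Chern connection of the $K_0$-invariant metric to split the holomorphic tangent space as horizontal $\oplus$ vertical. The standard computation gives
\[
i\partial\bar\partial\log|v|^2=\omega_{FS}^{\mathrm{vert}}-\frac{\langle i\Theta_L v,v\rangle}{|v|^2}
\]
on horizontal vectors, with no mixed terms at $v$. The complex tangent space $H_vM=\ker\partial\psi$ is the horizontal space plus the vertical vectors orthogonal to $v$. The vertical part is therefore the Fubini--Study form on $\PP(L_\beta)$, which is positive definite of rank $q-b-1$.

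For the horizontal part, I will write $L=X^*\otimes Y$ with $X=\Ep/W_+$ and $Y=\Em/W_-$. Both are quotients of trivial bundles with the induced metrics, and $X$ is the hyperplane line bundle on $\PP^{p-1}\cong\Gr_{p-1}(\Ep)$. Decomposable $v=\xi\otimes y$ gives $\Theta_L=-\Theta_X\otimes1+1\otimes\Theta_Y$. The term from $X^*$ contributes $+|\text{tangent}|^2$ in all $p-1$ directions of $\PP^{p-1}$. For the quotient bundle $Y$ on $\Gr_b(\Em)$, the curvature is $\langle i\Theta_Y(\eta,\bar\eta)y,y\rangle=|\eta^*y|^2$ for $\eta\in\Hom(W_-,Y)=T\Gr_b$, where $\eta^*y\in W_-$. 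After the minus sign, this is negative semidefinite, and its rank equals the rank of $\eta\mapsto\eta^*y$, a surjection onto $W_-$ since $y\ne0$. Hence it has $b$ negative eigenvalues and a kernel of dimension $b(q-b)-b=b(q-b-1)$. Adding these up gives the signature $(p-1+q-b-1,\ b,\ b(q-b-1))$.

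For the kernel statement, I will identify $\{\eta:\eta^*y=0\}$, meaning the directions moving $W_-$ inside $y^{\perp}\cap\Em$ with $W_+$ and $v$ held horizontally fixed, with the tangent space of the $\nu$-fibre $\Gr_b(\Em\cap N^{\perp_h})$ from Theorem~\ref{thm:structure}(iv). Here $N=\C(u+Au)$. The dimensions agree: with $c=1$, $a=p-1$ the fibre is $\Gr_{p-1}(\C^{p-1})\times\Gr_b(\C^{q-1})$, of dimension $b(q-b-1)$. The fibre is a complex submanifold of $M$, so its tangent space lies in the Levi kernel, and equality follows from the dimension count.

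I expect the main obstacle to be bookkeeping of signs and conventions. The key points are that $\psi$ is the correct defining function (with $\{\psi<0\}$ the $O(p,b,0)$ side), that the quotient $\Ep/W_+$ really has positive curvature under the identification with $W_+^\perp\cap\Ep$, and that there are no mixed horizontal--vertical terms at $v$. I would verify the latter by computing in a $K_0$-adapted holomorphic frame that is normal at $\beta$.
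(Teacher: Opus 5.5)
Your signature computation follows the paper's proof. It uses the Chern-connection splitting of $H(M)$ with no mixed terms in a normal frame, and the Kobayashi-type formula for $i\partial\bar\partial\log|A|^2$. The vertical part gives $q-b-1$ positive directions on $A^\perp$, and the $\cO(-1)$ factor gives $p-1$ more. The quotient-bundle curvature $|\eta^*a|^2$ on $\Hom(W_-,\Em/W_-)$ gives $b$ negative directions and the kernel $\{\eta:\eta(W_-)\perp a\}$, of dimension $b(q-b-1)$. That part is correct.

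The gap is in the kernel statement. You argue that the $\nu$-fibre $\Sigma$ is a complex submanifold of $M$ and therefore tangent to the Levi kernel, but this inference is invalid. $\psi|_\Sigma\equiv0$ only gives $\mathcal L(X,\bar X)=0$ for $X\in T^{1,0}\Sigma$, i.e.\ $T^{1,0}\Sigma$ is totally isotropic. Isotropic vectors are forced into the kernel only for semidefinite forms. Here, whenever the kernel is non-zero (i.e.\ $1\le b\le q-2$), there are $p+q-b-2\ge p\ge1$ positive and $b\ge1$ negative eigenvalues. A totally isotropic subspace can then have dimension up to $\min(p+q-b-2,b)+b(q-b-1)$, so isotropy plus your dimension count proves nothing; compare the null line of $\operatorname{diag}(1,-1)$.

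What you need is the direct check that the paper makes, which you only assert with the phrase ``held horizontally fixed''. Along a $\nu$-fibre, $W_+$ and $N=\C(u+a)$ are fixed, with $a=Au$ and $u\in W_+^\perp\cap\Ep$ a unit vector, while $W_-$ ranges over $\Gr_b(a^\perp\cap\Em)$. Identify $\Em/W_-$ with $W_-^\perp\cap\Em$. Since $a\perp W_-$ throughout, the corresponding point of $L$ is the fixed map $[u]\mapsto a$. The Chern connection of the quotient metric on $Q_b\cong W_-^\perp\cap\Em$ is $\pi_{W_-^\perp}\circ d$, so this curve is parallel, i.e.\ horizontal. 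Its $\Gr_b$-component $\eta$ satisfies $\eta(W_-)\subset a^\perp$, i.e.\ $\eta^*a=0$. Because the Levi form is block-diagonal with no mixed terms, this places the tangent space of the $\nu$-fibre inside the kernel you computed. Only then does the dimension count $b(q-b-1)$ give equality.
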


\begin{proof}
Split $H(M)$ into vertical and horizontal parts using the Chern connection. In a normal frame,
\[
i\partial\bar\partial\log|A|^2=\omega_{FS}^{\mathrm{vert}}-\la i\Theta_L(\cdot,\bar\cdot)A,A\ra/|A|^2,
\]
which is the computation behind Kobayashi's criterion for negativity of vector bundles \cite{Kobayashi75,Griffiths69}.
\begin{itemize}
\item The vertical part is positive on $A^\perp\subset L_x$, of rank $q-b-1$.
\item The curvature of $\cO(-1)$ contributes $+\omega_{FS}$ on $T\PP^{p-1}$, of rank $p-1$.
\item For the quotient bundle $Q_b$ of the trivial bundle $\Em$, $\la i\Theta_{Q_b}(\xi,\bar\xi)a,a\ra=\|\xi^*a\|^2$, where $a=A(u)$ for unit $u$. As $\xi$ ranges over $\Hom(W_-,\Em/W_-)$, the vector $\xi^*a$ fills $W_-$. This gives $b$ negative directions and a kernel $\{\xi:\xi(W_-)\perp a\}$ of dimension $b(q-b-1)$.
\end{itemize}
The $\nu$-fibres keep $W_+$ and $N=\{u+Au\}$ fixed and move $W_-$ inside $a^\perp\cap\Em$, so they are horizontal and tangent to that kernel.
\end{proof}

For $b=0$ the Levi form is positive definite and $M$ bounds the strictly pseudoconvex disc bundle $\{|A|<1\}$, in accordance with Theorem~\ref{thm:hull}. For $b\ge1$ there are negative eigenvalues; by Lewy's theorem \cite{Lewy56}, CR functions then extend locally to both sides of $M$, and Theorem~\ref{thm:vanishing} shows that all of them are constant.

%=====================================================================
\section{Examples}\label{sec:examples}
%=====================================================================

\subsection{\texorpdfstring{$\SU(p,q)$ on $\PP^{n-1}$}{SU(p,q) on P\textasciicircum (n-1)}}
There are three $G_0$-orbits: $D_\pm=\{\pm h>0\}$ and $\Sigma=\{h=0\}$. The base cycles are $\PP(E_\pm)$, and $M=\Sigma$ for the closed orbit, whose Levi form has signature $(p-1,q-1)$. Theorem~\ref{thmD} gives:
\begin{itemize}
\item $p,q\ge2$: only constants near any Matsuki orbit;
\item $p=1$: $\Sigma$ is the unit sphere in $\Hom(\C,\C^q)=\C^q$ and its hull is the closed ball;
\item $p=q=1$: $\Sigma$ is a circle, its own hull.
\end{itemize}
For $p,q\ge2$ the constancy is also seen directly: $\Sigma$ contains the projective lines $\PP(L)$ of totally isotropic $2$-planes $L$, and any two points of $\Sigma$ are joined by at most two of them (Lemma~\ref{lem:isolines}, Proposition~\ref{prop:curves}).

\begin{example}[Continuous CR functions versus germs]\label{ex:sqrt}
Let $p=1\le q-1$, so that $\Sigma=S^{2q-1}=\{|\Phi|=1\}\subset\C^q=\cA$ and $\widehat\Sigma=\overline\cD$ is the closed unit ball. By Theorem~\ref{thmF}(v), every continuous CR function on $\Sigma$ extends to a function on $\overline\cD$ that is holomorphic on $\cD$; this is the Bochner--Hartogs theorem. The extension is not, in general, holomorphic near any point of $\Sigma$. For instance $f(\Phi)=\sqrt{1-\Phi_1}$ (principal branch) is holomorphic on $\cD$ and continuous on $\overline\cD$, since $\operatorname{Re}(1-\Phi_1)\ge0$ there; it is a uniform limit of the functions $f(t\,\cdot)$, $t<1$, which are holomorphic near $\overline\cD$, so $f|_\Sigma\in CR^0(\Sigma)$. Suppose $F$ were holomorphic on a connected neighbourhood $U$ of $e=(1,0,\dots,0)$ with $F=f$ on $\Sigma\cap U$. On the line $\{(\zeta,0,\dots,0)\}$ the functions $F(\zeta,0,\dots,0)$ and $\sqrt{1-\zeta}$ are holomorphic in $\{|\zeta|<1\}\cap U$, continuous up to the arc $\{|\zeta|=1\}\cap U$ and equal there. By the Schwarz reflection principle they agree near $\zeta=1$, which is impossible because $\sqrt{1-\zeta}$ is not holomorphic at $1$. So continuous CR functions, unlike germs (Theorem~\ref{thm:hull}), need not extend to a neighbourhood of $\widehat M$.
\end{example}

\subsection{\texorpdfstring{$\SU(2,2)$ on $\Gr_2(\C^4)$}{SU(2,2) on Gr2(C4)}}
There are six $G_0$-orbits, listed in Table~\ref{tab:SU22}. Here $k=p=q=2$.
\begin{itemize}
\item $M(1,0,1)$ and $M(0,1,1)$ are of holomorphic type. In the chart, $M(1,0,1)$ is the set of rank-one partial isometries of $\C^2$, a $5$-dimensional strictly pseudoconvex hypersurface of the quadric cone $\{\det\Phi=0\}$. Its hull is $\{\det\Phi=0,\ \|\Phi\|\le1\}$, which contains the vertex $\Phi=0$, i.e.\ $W=\Ep$.
\item $M(0,0,2)\cong\U(2)$ is the exceptional case.
\item The base cycles $\{\Ep\}$ and $\{\Em\}$ are points.
\item The base cycle $\PP^1\times\PP^1$ of $O(1,1,0)$ is not of holomorphic type, so it has only constant holomorphic germs.
\end{itemize}

\begin{table}[ht]
\centering\small
\renewcommand{\arraystretch}{1.2}
\begin{tabular}{@{}lcccl@{}}
\toprule
$(a,b,c)$ & $\dim_\R\gamma$ & $\dim_\R\kappa$ & $\dim_\R M$ & holomorphic germs near $M$\\
\midrule
$(2,0,0)$ & 8 & 0 & 0 & point\\
$(0,2,0)$ & 8 & 0 & 0 & point\\
$(1,1,0)$ & 8 & 4 & 4 & constants\\
$(1,0,1)$ & 7 & 6 & 5 & hull $\{\det=0,\ \|\Phi\|\le1\}$\\
$(0,1,1)$ & 7 & 6 & 5 & hull $\{\det=0,\ \|\Psi\|\le1\}$\\
$(0,0,2)$ & 4 & 8 & 4 & totally real $\U(2)$, own hull\\
\bottomrule
\end{tabular}
\caption{$\SU(2,2)$ on $\Gr_2(\C^4)$.}\label{tab:SU22}
\end{table}

\subsection{\texorpdfstring{$\SU(2,6)$ on $\Gr_3(\C^8)$}{SU(2,6) on Gr3(C8)}}
Here $k=3\notin\{2,6\}$, so by Theorem~\ref{thm:vanishing} \emph{every} Matsuki orbit, including all base cycles, has only constant holomorphic germs in $Z$. The orbits are listed in Table~\ref{tab:SU26}.

\begin{table}[ht]
\centering\small
\renewcommand{\arraystretch}{1.2}
\begin{tabular}{@{}lllccl@{}}
\toprule
$(a,b,c)$ & base & fibre $\Iso_c$ & $\dim_\R M$ & CR dim/codim & Levi form in $\kappa$\\
\midrule
$(2,1,0)$ & $\mathrm{pt}\times\PP^5$ & pt & 10 & complex & --\\
$(1,2,0)$ & $\PP^1\times\Gr_2(\C^6)$ & pt & 18 & complex & --\\
$(0,3,0)$ & $\Gr_3(\C^6)$ & pt & 18 & complex & --\\
$(1,1,1)$ & $\PP^1\times\PP^5$ & $S^9$ & 21 & $10/1$ & $(5,1,4)$\\
$(0,2,1)$ & $\Gr_2(\C^6)$ & $\Iso_1(\C^{2,4})$ & 25 & $12/1$ & indefinite\\
$(0,1,2)$ & $\PP^5$ & $\Iso_2(\C^{2,5})$ & 26 & $11/4$ & --\\
\bottomrule
\end{tabular}
\caption{$\SU(2,6)$ on $\Gr_3(\C^8)$. The signature $(5,1,4)$ (positive, negative, zero) is given by Proposition~\ref{prop:levi}. For $M(0,2,1)$ the Levi form restricted to the fibre $\Iso_1(\C^{2,4})$, a hypersurface of Levi signature $(1,3)$ in $\PP^5$, is already indefinite.}\label{tab:SU26}
\end{table}

\begin{example}[The ball-bundle guess]\label{ex:ballbundle}
The orbit $M(1,1,1)$ is the unit sphere bundle of $L=\cO(-1)\boxtimes Q$ over $\PP^1\times\PP^5$. Each fibre sphere $S^9$ bounds the ball of positive lines, and the disc bundle $\{|A|<1\}$ lies in $O(2,1,0)$. Its zero section maps to the base cycle $\{\Ep\oplus\ell\}\cong\PP^5$, collapsing the $\PP^1$ factor. It is natural to guess that this disc bundle is the hull. Fibrewise Hartogs extension does show that every function holomorphic near $M$ in $\kappa$ extends to the disc bundle. But by Theorem~\ref{thmE} and Theorem~\ref{thm:vanishing} all such functions are constant, so the guess holds only vacuously. By Theorem~\ref{thmF}(iv) the same holds for continuous CR functions: although each fibre sphere carries many CR functions, none of them glue to a non-constant global one. The obstruction is cohomological: $H^0(\PP^5,S^dQ^*)=0$ for $d\ge1$. This is exactly the vanishing used in the proof of Theorem~\ref{thmE} for $\kappa(1,1)$, in the case $d=m_1<m_2$ with $s=1$: there $\Gr_s(\Em)=\PP^5$ and $Q_-^*=W_-^0$ is a proper subbundle of the trivial bundle $\Em^*$, so Lemma~\ref{lem:vanish} kills all non-zero $\lambda$. The Levi form has signature $(5,1)$ with kernel the tangent spaces of the $\PP^4$'s $\{\la e_1,e_2+e_3,\ell\ra:\ell\in\PP\la e_4,\dots,e_8\ra\}\subset M$ (Proposition~\ref{prop:levi}).
\end{example}

\begin{example}[Shilov boundary]
For $k=p<q$ and $c=p$, $M=O(0,0,p)$ is the closed $G_0$-orbit of $\Gr_p(\C^{p+q})$, the set of isometries $\Ep\to\Em$, i.e.\ the Shilov boundary of $\cD$. Then $V_p=\Hom(\Ep,\Em)$, and Theorem~\ref{thm:hull} recovers the fact that its hull is $\overline{\cD}$. Kaup and Zaitsev \cite{KZ00} computed this polynomial hull for Shilov boundaries of all bounded symmetric domains, together with continuous extension of CR functions. For instance, for $\SU(2,3)$ on $\Gr_2(\C^5)$, $M$ is a Stiefel manifold of dimension $8$ and CR codimension $4$.
\end{example}

%=====================================================================
\section{Concluding summary}\label{sec:summary}
%=====================================================================

For a Matsuki orbit $M=\kappa\cap\gamma$ of a real form in a flag manifold, the analytic behaviour of a neighbourhood of $M$ is decided by $K$-stable divisors. If a $K$-stable prime divisor misses $\kappa$, then $M$ lies in the preimage of an affine variety and carries infinitely many independent holomorphic germs (Theorem~\ref{thmC}(i)). This occurs for Hermitian and for non-Hermitian real forms (Example~\ref{ex:general}). If every such divisor contains $\kappa$, then, when stabilisers on the open $K$-orbit are connected, only constants occur (Theorem~\ref{thmC}(ii)).

For $\SU(p,q)$ on $\Gr_k(\C^{p+q})$ the $K$-stable prime divisors are $\{W\cap\Em\ne0\}$ when $k=p$ and $\{W\cap\Ep\ne0\}$ when $k=q$. The criterion therefore singles out the orbits of holomorphic type, and for them the hull is $\overline{\cD}\cap\{\rank\le c\}$ (Theorem~\ref{thmD}). The same dichotomy governs:
\begin{itemize}
\item regular functions on $K$-orbits (Theorem~\ref{thmE});
\item smooth and continuous global CR functions, and the degree-zero comparison with the Dolbeault cohomology of $\kappa$ (Theorem~\ref{thmF});
\item sections of all line bundles (Theorem~\ref{thmG}).
\end{itemize}
It is distinct from the Gindikin--Matsuki dichotomy for cycle spaces, and meets it only at the base point of $\cD$ (Theorem~\ref{thmH}).

%=====================================================================

\end{document}